\documentclass[12pt,twoside]{article}

\usepackage{amsmath,amssymb,amscd,amsthm}
\usepackage{mathrsfs}
\usepackage{latexsym}
\usepackage{indentfirst}

\usepackage{graphicx}
\usepackage{color}

\usepackage{cite}
\usepackage{enumerate}
\usepackage{needspace}
\usepackage{authblk}

\usepackage[colorlinks=false,pdfborder={0 0 0}]{hyperref}
\hypersetup{
  colorlinks=true,
  linkcolor=blue,
  citecolor=blue,
  urlcolor=blue
}

\allowdisplaybreaks[4]

\numberwithin{equation}{section}

\newtheorem{theorem}{Theorem}[section]
\newtheorem{corollary}{Corollary}[section]
\newtheorem{lemma}{Lemma}[section]
\newtheorem{proposition}{Proposition}[section]

\newtheorem{remark}{Remark}[section]

\makeatletter \oddsidemargin 0.0in \evensidemargin \oddsidemargin
\begin{document}
\raggedbottom

\title{Exact lattice resonances and a sharp second Picard obstruction for the two-dimensional inviscid primitive equations}

\author{%
  {\large Fengjie Luo}\thanks{Luofengjie664@163.com, (F. Luo)}
 \ and {\large Zhengdong Du}\thanks{
    corresponding author: zdu1985@qq.com, (Z. Du)}.\\
  {\small Department of Mathematics, Sichuan University}\\
  {\small Chengdu, Sichuan 610064, P. R. China}
}

\date{}

\begingroup
\makeatletter
\renewcommand{\@maketitle}{%
  \newpage
  \null
  \vspace*{-1.5cm}%
  {\centering
    \let\footnote\thanks
    {\LARGE \@title \par}%
    \vskip 0.8em%
    {\large
      \lineskip .5em%
      \@author\par}%
  }%
  \par
  \vskip 0.4em%
}
\makeatother
\setcounter{page}{1}
\maketitle
\endgroup
\thispagestyle{empty}

\begingroup
\renewenvironment{abstract}{%
  \small
  \quote
  \setlength{\parskip}{0pt}%
  {\centering\bfseries\abstractname\par}%
  \nobreak\vspace{0.5em}\nobreak
  \noindent\ignorespaces
}{%
  \par\endquote
}

\begin{abstract}
\noindent We study periodic perturbations of a rest state with constant stable stratification for the two-dimensional non-rotating primitive equations
without viscosity or thermal diffusion on a fixed rectangular torus. Fourier resonance analysis and Sobolev estimates show that, despite
unitary linear evolution, the polarized second Picard operator admits no bilinear bound losing less than one derivative at any Sobolev order
and any fixed nonzero time for real data with zero vertical means. The loss is sharp for \(s\ge2\), with matching bounds on every
sufficiently high physical dyadic shell. We classify exact lattice resonances in all signed channels and compute
their polarized coefficients. A mixed-branch family with fixed vertical frequencies isolates diagnostic transport and shows that finite vertical
regularity cannot reduce the loss below one horizontal derivative in tensor-product Sobolev spaces.
In contrast, for \(s\ge2\), the horizontal mean of the second Picard operator admits a time-uniform one-derivative bound.
In interaction variables, the response divided by time converges to the resonant operator, with an \(O(|t|^{-1})\) error after a four-derivative loss for \(s\ge4\).
Finally, among real linear spaces defined solely by a common Fourier-support constraint on both components, the maximal spaces on
which the full nonlinearity vanishes for all smooth states are precisely those supported on primitive lattice rays. On these rays, we construct
explicit global solutions and obtain an exact criterion for uniform strict stratification.

\par\vspace{0.4cm}
\noindent\textbf{Keywords:} Inviscid primitive equations; exact lattice resonances; second Picard operator; loss of derivatives; nonlinear transparency.

\par\vspace{0.4cm}
\noindent\textbf{AMS (2020) subject classification:} 35Q86, 35B30, 35B34, 76B55, 76B70.
\end{abstract}
\endgroup

\clearpage
\flushbottom

\baselineskip 15pt
\parskip 10pt

\section{Introduction}

Existence, stability, and the validity of reduced models are basic
questions in mathematical fluid mechanics. For large-scale oceanic and
atmospheric flows, the small ratio of vertical to horizontal length
scales leads to the primitive equations, in which hydrostatic balance
replaces vertical momentum evolution while the full vertical dependence
is retained. Lions, Temam, and Wang developed a mathematical framework
for these equations \cite{LionsTemamWang1992}, and Cao and Titi proved
global well-posedness of strong solutions to the three-dimensional system
with viscosity and thermal diffusion \cite{CaoTiti2007}.
The hydrostatic approximation was justified for anisotropic viscous flows
by Az\'erad and Guill\'en through weak convergence
\cite{AzeradGuillen2001}. Li and Titi obtained strong convergence globally
and uniformly in time \cite{LiTiti2019}. In a two-dimensional strip,
Paicu, Zhang, and Zhang established global well-posedness and the
hydrostatic limit for small data analytic in the tangential variable
\cite{PaicuZhangZhang2020}.

Without viscosity, the diagnostic recovery of vertical velocity loses a
horizontal derivative, and the available Cauchy theory depends on
additional structure or regularity. For homogeneous hydrostatic Euler
flow in an impermeable channel, Brenier constructed local smooth
solutions with strictly convex horizontal velocity profiles and constant
boundary slopes \cite{Brenier1999}. Masmoudi and Wong proved local
Sobolev existence and uniqueness under the Rayleigh condition
\(\partial_z^2u_0\ge c_0>0\), removing the boundary-slope restriction
\cite{MasmoudiWong2012}. Analytic local well-posedness on bounded domains
was established by Kukavica et al.\ \cite{KukavicaTemamVicolZiane2011},
while vertical viscosity and convexity permit a Gevrey theory
\cite{GerardVaretMasmoudiVicol2020}. Velocity convexity is distinct
from strict temperature stratification and is incompatible with vertical
periodicity. Conversely, Renardy proved
linear ill-posedness near certain shear flows \cite{Renardy2009}, and
Han-Kwan and Nguyen established nonlinear Sobolev ill-posedness using an
unbounded unstable spectrum \cite{HanKwanNguyen2016}. For hydrostatic
Euler--Boussinesq flow, Bianchini, Coti Zelati, and Ertzbischoff obtained
nonlinear ill-posedness and failure of the hydrostatic limit near
suitable stably stratified shears violating the Miles--Howard condition
\cite{BianchiniCotiZelatiErtzbischoff2025}. Stable singularity formation
has also been studied under symmetry and profile assumptions, first in
the isothermal case \cite{CollotIbrahimLin2024} and then with nonconstant
temperature \cite{IbrahimLinQianTiti2026}. In a different formulation,
Bianchini and Duch\^ene used stable stratification and isopycnal
coordinates to establish Sobolev well-posedness and the hydrostatic limit
with thickness diffusivity but no momentum viscosity
\cite{BianchiniDuchene2024}.

Stable stratification can also provide long-time control through internal
gravity waves or, in the presence of shear, phase mixing. The classical
Miles--Howard criterion gives spectral stability for stratified shear
flows when the local Richardson number is at least \(\frac{1}{4}\)
\cite{Miles1961,Howard1961}. Near a motionless stratified state in
\(\mathbb R^2\), Elgindi and Widmayer used anisotropic dispersion to
extend the stability time for the inviscid Boussinesq system
\cite{ElgindiWidmayer2015}. Jurja and Widmayer obtained the
\(O(\varepsilon^{-2})\) timescale for small, localized Sobolev
perturbations satisfying additional weighted and anisotropic bounds,
using refined dispersive estimates and a decomposition of nonlinear
interactions \cite{JurjaWidmayer2026}. In three dimensions, Takada
proved existence on any prescribed finite interval for sufficiently
strong stratification, together with convergence to layered Euler
dynamics \cite{Takada2019}. Rotation yields related results: Ghoul et
al.\ established analytic local well-posedness for rotating isothermal
primitive equations and extended the lifespan for well-prepared data
under fast rotation \cite{GhoulIbrahimLinTiti2022}. Guo, Pausader, and
Widmayer constructed global small localized axisymmetric Euler flows
near uniform rotation
\cite{GuoPausaderWidmayer2023}. For the Boussinesq system with velocity
viscosity but no thermal diffusion, Tao et al.\ established stability for
periodic perturbations of a linearly stratified hydrostatic equilibrium
\cite{TaoWuZhaoZheng2020}. Stability near Couette flow for Gevrey data
was proved in \cite{MasmoudiSaidHouariZhao2022}. For small Gevrey perturbations of inviscid
stratified Couette flow, Bedrossian et al.\ proved inviscid damping and
quantified gradient growth up to times of order \(\varepsilon^{-2}\)
under the strict Miles--Howard condition
\cite{BedrossianBianchiniCotiZelatiDolce2023}. Dispersion and mixing also
improve the stability threshold in the viscous three-dimensional
Boussinesq problem \cite{CotiZelatiDelZottoWidmayer2026}.

The analysis of oscillatory evolution requires both the resonance
geometry and the coefficients of the nonlinear interactions. Fast
singular limits and wave averaging provide a general framework for this
analysis \cite{Schochet1994,EmbidMajda1996}. For deterministic equations,
low-order Duhamel terms test the estimates required by an iteration
scheme. This approach underlies the Benjamin--Ono obstructions of
Molinet, Saut, and Tzvetkov \cite{MolinetSautTzvetkov2001} and the
first-iterate analysis of Bejenaru and Tao \cite{BejenaruTao2006}.
Related questions of data-to-solution regularity were studied for
dispersive equations by Kenig, Ponce, and Vega \cite{KenigPonceVega2001}
and for Euler flow by Himonas and Misio\l ek \cite{HimonasMisiolek2010}.
In wave kinetic theory, Shavit, B\"uhler, and Shatah derived kinetic
equations and turbulent spectra for weakly interacting internal waves
\cite{ShavitBuhlerShatah2024,ShavitBuhlerShatah2025}.
Xiang established weighted \(L^\infty\) bounds for the Boussinesq
collision operator with angular cutoffs and local well-posedness of the
associated wave kinetic equation \cite{Xiang2026}.
For the hydrostatic model, the same work derives finite-box interaction
coefficients and identifies collinear cancellation. A near-horizontal
high--low construction obstructs removal of the horizontal angular
cutoff in the radially weighted \(L^\infty\) spaces considered there
\cite[Section~8.2, Remark~8.4, and Appendix~A.5]{Xiang2026}.
These kinetic results concern continuous spectral densities. They do
not directly provide Sobolev estimates for the discrete amplitudes of
the deterministic equation on a fixed torus.

This leaves a specific question for a motionless, uniformly stratified
hydrostatic state on a doubly periodic domain: does time integration of
the linear oscillations compensate for the diagnostic derivative loss?
Linear unitarity alone does not answer this question. The analysis must
also distinguish active resonances from support constraints that cancel
the full nonlinearity. We address these questions for the equation
determined by periodic compatibility, with fixed stratification and
torus dimensions, without viscosity, thermal diffusion, or angular
cutoffs.

Let \(L_x,L_z>0\), and set
\[
 \Omega:=\mathbb T_x\times\mathbb T_z,
 \qquad \mathbb T_x:=\mathbb R/(L_x\mathbb Z),
 \qquad \mathbb T_z:=\mathbb R/(L_z\mathbb Z).
\]

For a time interval \(I\subset\mathbb R\) containing the origin, we
consider the two-dimensional, non-rotating system on \(I\times\Omega\):
\begin{equation}
\label{eq:1.2.1}
\left\{
\begin{aligned}
 &\partial_tv+v\partial_xv+w\partial_zv+\partial_xp=0,\\
 &\partial_zp=g\beta\theta,\\
 &\partial_t\theta+v\partial_x\theta+w\partial_z\theta+\gamma w=0,\\
 &\partial_xv+\partial_zw=0.
\end{aligned}
\right.
\end{equation}
Here \(v,w,\theta,p\) denote the horizontal velocity, vertical velocity,
temperature perturbation, and pressure perturbation, respectively, and
\(g,\beta,\gamma>0\). All perturbations are real-valued and periodic in
both spatial variables. The background temperature
\(T_s(z)=T_0+\gamma z\) and pressure
\(\pi_s(z)=-gz+\frac{g\beta\gamma}{2}z^2\) are not vertically periodic.
We impose periodicity only on their perturbations and set
\(N:=\sqrt{g\beta\gamma}\), the Brunt--V\"ais\"al\"a frequency.
Strict stratification means
\(\operatorname*{ess\,inf}_{\Omega}(\gamma+\partial_z\theta)>0\).

Our first result determines the sharp derivative loss of the polarized
second Picard operator \(\mathcal B_t\). Let \(X^s\) be the real
\(H^s\) phase space with zero vertical means, as defined in
Section~\ref{sec:2-preliminaries-main-results}. For every \(s\in\mathbb R\) and fixed \(t\ne0\),
\(\mathcal B_t\) has no bounded bilinear extension from
\(X^s\times X^s\) to \(X^{s-\delta}\) when \(0\le\delta<1\), whereas
it is bounded into \(X^{s-1}\) for \(s\ge2\)
(Theorem~\ref{thm:2.2.1-second-picard-obstruction}). Thus the quadratic obstruction occurs at a
linearly unitary rest state, without exponentially growing linear modes.
It concerns iteration estimates and does not by itself establish
Hadamard ill-posedness of the nonlinear problem.

The resonance classification in Theorem~\ref{thm:2.2.2-exact-lattice-resonances} covers all four
ordered branch combinations on the signed integer lattice, including
zero horizontal input and output frequencies, and determines their
polarized coefficients. A mixed-branch construction isolates diagnostic
transport at fixed vertical frequencies
(Corollary~\ref{cor:4.2.1-diagnostic-mixed-resonances}). It shows that no finite choice
of vertical Sobolev indices compensates for a horizontal derivative loss
below one in the full tensor-product spaces
(Corollary~\ref{cor:4.2.2-anisotropic-picard-obstruction}). Matching bounds on every
sufficiently high physical dyadic shell give a scale-localized version of
the obstruction (Proposition~\ref{prop:4.2.1-dyadic-picard-bounds}).
These conclusions concern the complete periodic
Duhamel operator on real-valued data, rather than a cutoff collision
operator for spectral densities.

Two positive estimates complement this obstruction
(Theorem~\ref{thm:2.2.4-mean-resonant-averaging}). The horizontal mean of
\(\mathcal B_t\) is bounded from \(X^s\times X^s\) to \(X^{s-1}\),
uniformly in time, for \(s\ge2\). For each fixed pair of inputs,
\(t^{-1}e^{-t\mathcal L}\mathcal B_t\) converges in \(X^{s-1}\) to the
operator obtained by retaining only exact resonances. For \(s\ge4\),
the error is \(O(|t|^{-1})\) in \(X^{s-4}\), uniformly on bounded
sets of inputs. Thus the resonant operator gives the leading
long-time quadratic response, while its horizontal mean vanishes.

Theorem~\ref{thm:2.2.3-maximal-transparent-rays} characterizes the common Fourier supports on which
both the quadratic and cubic terms vanish. The maximal nonzero real
linear spaces defined by such support constraints are exactly the
primitive lattice rays, with no amplitude relation between the two
components. For \(s>2\), this cancellation gives explicit global
ray-supported solutions (Corollary~\ref{cor:5.3.1-fourier-ray-dynamics})
and an exact criterion for uniform strict stratification
(Proposition~\ref{prop:5.3.1-uniform-stratification}). The Picard bounds
also yield obstructions to quadratic iteration
(Theorem~\ref{thm:5.1.1-quadratic-iteration-obstruction}) and, under
separate existence and Duhamel-consistency assumptions, twice
differentiability of trajectory maps
(Corollary~\ref{cor:5.1.1-conditional-non-c2}). The fixed-time
obstruction persists under bounded quadratic changes of variables
(Corollary~\ref{cor:5.2.1-quadratic-change-obstruction}).

The main analytic difficulty is the interaction between diagnostic
reconstruction and wave oscillations. For the mean-zero part
\(w^0=-\partial_z^{-1}\partial_xv\), Fourier indices \((n,m)\) with
\(m\ne0\) give
\[
 \widehat{w^0}(n,m)=-\frac{L_z n}{L_xm}\widehat v(n,m)
 \quad\text{and}\quad
 \omega_\sigma(n,m)=\sigma N\frac{L_z n}{L_xm},
 \qquad \sigma\in\{\pm1\}.
\]
The same unbounded ratio occurs in the diagnostic velocity and the wave
frequency. In contrast, the nonhydrostatic frequency has magnitude
\(N\frac{|\xi_x|}{|\xi|}\). Individual modes on the fixed torus prevent uniform
dispersive decay, and the rest state provides no shear mixing.
Moreover, periodic compatibility fixes
\(\bar w=-\frac{1}{\gamma}\partial_x\overline{v\theta}\), where the
bar denotes vertical averaging. We retain this correction and the
corresponding pressure correction when deriving the exact quadratic and
cubic terms in Section~\ref{sec:2-preliminaries-main-results}.
At the endpoint \(s=2\), mixed-norm estimates adapted to vertical
integration control the products without using an algebra estimate in
\(H^1(\Omega)\).

A second issue is to distinguish a resonant phase from an active
interaction. With the phase \(\Phi\) and polarized coefficient
\(\widetilde q\) defined in Section~\ref{sec:2-preliminaries-main-results}, the frequencies
\(k_1=(1,1)\), \(k_2=(-M^2,M)\), and \(k=k_1+k_2\), for \(M\ge2\),
satisfy
\[
 \Phi_{\sigma\sigma\sigma}(k;k_1,k_2)=0
 \quad\text{and}\quad
 \widetilde q_{\sigma\sigma\sigma}(k;k_1,k_2)
 =\frac{i\kappa_x}{2\sqrt\gamma}(M^2-1),
\]
where \(\kappa_x:=\frac{2\pi}{L_x}\). The resonant time integral equals \(t\), while the coefficient and the
high input and output frequencies have size comparable to \(M^2\).
The lower bound still requires real-valued inputs: their conjugate modes
must not cancel the selected output. We separate these outputs and use
two-sided comparisons of the Sobolev weights, which remain valid for
negative indices. The mixed-branch construction requires a separate
calculation, since the diagnostic contributions cancel in the
same-branch family after polarization.
For nonzero phases, the integer numerator in the phase formula gives
an explicit lower bound and a quantitative averaging estimate with
finite derivative loss. At zero horizontal output, direct division
of the polarized coefficient by the phase gives the stronger
one-derivative bound, uniformly in time.

The support classification must account for the averaged terms as well
as local transport. For smooth vertically mean-zero functions supported on a
single lattice ray generated by \((n_0,m_0)\), with \(m_0\ne0\),
\[
 \partial_z^{-1}\partial_xf=\lambda_0f
 \quad\text{and}\quad
 \partial_x\overline{fg}=0,
 \qquad \lambda_0:=\frac{L_zn_0}{L_xm_0}.
\]
These identities remove both local transport and the quadratic and cubic
averaged corrections. Conversely, two noncollinear allowed frequencies
produce a real state with a nonzero quadratic output. The argument thus
classifies the common-support spaces for the full nonlinearity, rather
than only the zero set of a single transport multiplier.

The paper is organized as follows.
Section~\ref{sec:2-preliminaries-main-results} derives the diagnostic formulation, develops the
linear theory, and states the main results.
Section~\ref{sec:3-quadratic-wave-interactions} contains the wave multipliers, the
Fourier--Duhamel representation, the endpoint multilinear estimates,
and the phase-division bounds.
Section~\ref{sec:4-exact-resonances-picard-obstruction} proves the main
theorems, the anisotropic and dyadic refinements, and the horizontal-mean
and resonant-averaging estimates.
Section~\ref{sec:5-iteration-obstructions-fourier-rays} treats quadratic iteration, conditional
trajectory-map regularity, quadratic changes of variables, and exact
dynamics on Fourier rays.


\section{Preliminaries and main results}
\label{sec:2-preliminaries-main-results}

We derive the diagnostic formulation, introduce the functional setting
and wave variables, and establish the properties of the linear group.
The main results are stated in Subsection~\ref{subsec:2.2-main-results}.

\subsection{Preliminaries}
\label{subsec:2.1-preliminaries}

For a periodic function \(f\), write
\[
 \bar f(x):=\frac{1}{L_z}\int_0^{L_z}f(x,z)\,\mathrm dz
 \quad\text{and}\quad
 \langle f\rangle_x(z):=\frac{1}{L_x}\int_0^{L_x}f(x,z)\,\mathrm dx.
\]
Each average is identified with its constant extension in the averaged
variable. Set \(\mathcal Pf:=f-\bar f\). For \(\bar f=0\), let
\(\partial_z^{-1}f\) denote the unique periodic primitive with zero
vertical mean.

Let \((v,w,\theta,p)\) be a smooth real-valued periodic solution of
\eqref{eq:1.2.1}. Vertical averaging of hydrostatic balance and
incompressibility implies \(\bar\theta=0\) and \(\partial_x\bar v=0\).
The conservative momentum equation preserves the spatial mean of
\(v\). Hence \(\bar v\) is constant in both space and time, and a
horizontal Galilean transformation reduces the problem to the invariant
class
\begin{equation}
\label{eq:2.1.1}
        \bar v=0,
        \qquad
        \bar\theta=0.
\end{equation}

Incompressibility and the vertically averaged temperature equation imply
\[
 w=-\partial_z^{-1}\partial_xv+\bar w
 \quad\text{and}\quad
 \gamma\bar w=-\partial_x\overline{v\theta}.
\]
Thus
\begin{equation}
\label{eq:2.1.2}
        w=-\partial_z^{-1}\partial_xv
        -\frac{1}{\gamma}\partial_x\overline{v\theta}.
\end{equation}
Thus periodic compatibility fixes the vertical mean of \(w\), which has
no independent initial datum.

Similarly, hydrostatic balance and the vertically averaged momentum
equation imply
\[
 p=g\beta\partial_z^{-1}\theta+\bar p
 \quad\text{and}\quad
 \partial_x\bar p=-\partial_x\overline{v^2}.
\]
The normalization \(\langle\bar p\rangle_x=0\) then determines
\begin{equation}
\label{eq:2.1.3}
        p=g\beta\partial_z^{-1}\theta
        -\overline{v^2}
        +\bigl\langle\overline{v^2}\bigr\rangle_x.
\end{equation}
Equations \eqref{eq:2.1.2}--\eqref{eq:2.1.3} uniquely determine
\(w,p\) under this normalization.

For smooth scalar functions \(f,g\) with zero vertical mean, define
\begin{equation}
        b(f,g):=(\partial_z^{-1}\partial_xf)\partial_zg-f\partial_xg.
\end{equation}
Integration by parts gives
\(\overline{b(f,g)}=-\partial_x\overline{fg}\), so
\begin{equation}
\label{eq:2.1.5}
        \mathcal P b(f,g)
        =(\partial_z^{-1}\partial_xf)\partial_zg-f\partial_xg
        +\partial_x\overline{fg}.
\end{equation}
For smooth pairs \(U=(v,\theta)\) and \(U_j=(v_j,\theta_j)\) with
zero vertical mean, define the linear operator
\begin{equation}
\label{eq:2.1.6}
        \mathcal L(v,\theta)
        :=(-g\beta\partial_z^{-1}\partial_x\theta,\gamma\partial_z^{-1}\partial_xv),
\end{equation}
and the multilinear terms
\begin{equation}
\label{eq:2.1.7}
        \mathcal N_2(U_1,U_2)
        :=\bigl(\mathcal P b(v_1,v_2),
                 \mathcal P b(v_1,\theta_2)\bigr),
\end{equation}
and
\begin{equation}
\label{eq:2.1.8}
 \mathcal N_3(U_1,U_2,U_3)
 :=\frac{1}{\gamma}\partial_x\overline{v_1\theta_2}
   \bigl(\partial_zv_3,\partial_z\theta_3\bigr).
\end{equation}
The first argument of \(\mathcal N_2\) supplies the advecting velocity.

Substituting \eqref{eq:2.1.2} and \eqref{eq:2.1.3} into
\eqref{eq:1.2.1} yields
\begin{equation}
\label{eq:2.1.9}
\left\{
\begin{aligned}
 &\partial_tv=-g\beta\partial_z^{-1}\partial_x\theta
 +\mathcal P b(v,v)
 +\frac{1}{\gamma}\partial_x\overline{v\theta}\,\partial_zv,\\
 &\partial_t\theta=\gamma\partial_z^{-1}\partial_xv
 +\mathcal P b(v,\theta)
 +\frac{1}{\gamma}\partial_x\overline{v\theta}\,\partial_z\theta.
\end{aligned}
\right.
\end{equation}
Conversely, \eqref{eq:2.1.9} preserves \eqref{eq:2.1.1}. The linear and
projected terms have zero vertical mean. The cubic terms also have zero
vertical mean, since their common prefactor is independent of \(z\).
Reconstruction by \eqref{eq:2.1.2}--\eqref{eq:2.1.3} then recovers
\eqref{eq:1.2.1} with the prescribed pressure normalization. The two
formulations are therefore equivalent for smooth solutions in this class.

Testing the momentum and temperature equations against \(\gamma v\)
and \(g\beta\theta\), respectively, eliminates the transport terms by
periodicity and incompressibility. Hydrostatic balance cancels the
remaining coupling terms:
\[
 -\gamma\int_\Omega v\partial_xp
 -g\beta\gamma\int_\Omega\theta w
 =\gamma\int_\Omega w\partial_zp
 -g\beta\gamma\int_\Omega\theta w=0.
\]
Consequently,
\begin{equation}
\label{eq:2.1.10}
 \frac{\mathrm d}{\mathrm dt}
 \left(
 \frac{\gamma}{2}\|v(t)\|_{L^2}^2
 +\frac{g\beta}{2}\|\theta(t)\|_{L^2}^2
 \right)=0.
\end{equation}
In terms of \(U\), the closed system takes the exact polynomial form
\begin{equation}
\label{eq:2.1.11}
        \partial_tU
        =\mathcal L U+\mathcal N_2(U,U)+\mathcal N_3(U,U,U).
\end{equation}
The cubic term has zero second differential at the origin. Thus, for
smooth directions \(H_1,H_2\), the second differential of the full
nonlinearity is the polarization of \(\mathcal N_2(U,U)\):
\begin{equation}
\label{eq:2.1.12}
 D^2\bigl(\mathcal N_2(U,U)+\mathcal N_3(U,U,U)\bigr)\big|_{U=0}[H_1,H_2]
 =\mathcal N_2(H_1,H_2)+\mathcal N_2(H_2,H_1).
\end{equation}

We now introduce the Fourier and Sobolev conventions. Unless stated
otherwise, norms are taken over \(\Omega\), and the parameters
\(L_x,L_z,g,\beta,\gamma\) are fixed. For nonnegative quantities
\(f,g\), the notation \(f\lesssim g\) means \(f\le Cg\), where \(C>0\)
depends only on the fixed parameters and the indicated Sobolev indices,
unless stated otherwise. We write \(f\gtrsim g\) when \(g\lesssim f\),
and \(f\simeq g\) when both inequalities hold.

Set
\[
 \kappa_x:=\frac{2\pi}{L_x},\qquad
 \kappa_z:=\frac{2\pi}{L_z},\qquad
 \mathbb Z_*:=\mathbb Z\setminus\{0\},\qquad
 K:=\mathbb Z\times\mathbb Z_*.
\]
For \(k=(n,m)\in\mathbb Z^2\), write \(\xi_k:=(\kappa_xn,\kappa_zm)\).

For \(f\in L^2(\Omega)\), define
\begin{equation}
 \widehat f(n,m)
 :=\frac{1}{L_xL_z}\int_0^{L_x}\int_0^{L_z}
 f(x,z)e^{-i(\kappa_xnx+\kappa_zmz)}\,\mathrm dz\,\mathrm dx
\end{equation}
and
\begin{equation}
 f(x,z)=\sum_{(n,m)\in\mathbb Z^2}
 \widehat f(n,m)e^{i(\kappa_xnx+\kappa_zmz)},
\end{equation}
where the series converges in \(L^2(\Omega)\). For \(s\in\mathbb R\),
we use the inhomogeneous Sobolev norm
\begin{equation}
\label{eq:2.1.15}
        \|f\|_{H^s}^2
        :=L_xL_z\sum_{k\in\mathbb Z^2}
        \langle\xi_k\rangle^{2s}|\widehat f(k)|^2,
        \qquad
        \langle\xi\rangle:=(1+|\xi|^2)^{\frac{1}{2}}.
\end{equation}
The averaging operators and \(\mathcal P\) are bounded Fourier
projections on every \(H^s\). For negative indices, the mean constraints
are understood distributionally.
On the subspace \(\bar f=0\), the inverse vertical derivative is given by
\[
        \widehat{\partial_z^{-1}f}(n,m)
        =\frac{1}{i\kappa_zm}\widehat f(n,m),
        \qquad m\in\mathbb Z_*.
\]
The hydrostatic multiplier \(\partial_z^{-1}\partial_x\) has the real
symbol
\begin{equation}
        \widehat{\partial_z^{-1}\partial_xf}(k)=\lambda(k)\widehat f(k),
        \qquad
        \lambda(k):=\frac{\kappa_xn}{\kappa_zm}
        =\frac{L_z}{L_x}\frac{n}{m},
        \qquad k=(n,m)\in K.
\end{equation}
This multiplier commutes with spatial derivatives and satisfies
\(|\lambda(k)|\le\kappa_z^{-1}\langle\xi_k\rangle\). Hence, for
\(f\in H^s\) with \(\bar f=0\),
\begin{equation}
\label{eq:2.1.17}
        \|\partial_z^{-1}\partial_xf\|_{H^{s-1}}
        \le C\|f\|_{H^s},
        \qquad s\in\mathbb R.
\end{equation}
It is not bounded on \(H^s\), since \(|\lambda(n,1)|\to\infty\) as
\(|n|\to\infty\).

The real energy space is
\[
 X^s:=\bigl\{(v,\theta)\in H^s(\Omega;\mathbb R)^2:
                   \bar v=\bar\theta=0\bigr\},
\]
with norm
\[
 \|U\|_{X^s}^2:=\gamma\|v\|_{H^s}^2+g\beta\|\theta\|_{H^s}^2.
\]
The mean constraints are equivalent to
\(\widehat v(n,0)=\widehat\theta(n,0)=0\) for every \(n\in\mathbb Z\).
Write \(X^\infty:=\bigcap_{r\ge0}X^r\). The complexification
\(X^s_{\mathbb C}\) of \(X^s\) consists of complex-valued pairs with the
same mean constraints and inner product
\begin{equation}
 \langle U_1,U_2\rangle_{X^s_{\mathbb C}}
 :=\gamma\langle v_1,v_2\rangle_{H^s}
   +g\beta\langle\theta_1,\theta_2\rangle_{H^s}.
\end{equation}
For \(s>2\) and \(0<\kappa<\gamma\), set
\begin{equation}
        X_\kappa^s
        :=\left\{U=(v,\theta)\in X^s:
        \operatorname*{ess\,inf}_{\Omega}
        (\gamma+\partial_z\theta)\ge\kappa\right\}.
\end{equation}
Sobolev embedding and \(\kappa<\gamma\) imply that \(X_\kappa^s\)
contains an open ball about the origin in \(X^s\). Write
\(X^\infty_{\mathbb C}:=\bigcap_{r\ge0}X^r_{\mathbb C}\), and extend
all multilinear maps by complex linearity in each argument. An overbar
denotes vertical averaging on spatial functions and complex conjugation
on Fourier coefficients.

The anisotropic estimates use tensor-product Sobolev spaces. For
\(s_x,s_z\in\mathbb R\), let
\(H^{s_x,s_z}(\Omega;\mathbb C)\) be the completion of the complex
trigonometric polynomials with respect to the norm
\begin{equation}
\label{eq:2.1.20}
 \|f\|_{H^{s_x,s_z}}^2
 :=L_xL_z\sum_{(n,m)\in\mathbb Z^2}
 \langle\kappa_xn\rangle^{2s_x}
 \langle\kappa_zm\rangle^{2s_z}|\widehat f(n,m)|^2,
\end{equation}
where \(\langle\cdot\rangle\) has the same meaning for scalar frequencies.
Write \(H^{s_x,s_z}(\Omega;\mathbb R)\) for the real subspace, and set
\begin{equation}
 X^{s_x,s_z}
 :=\bigl\{(v,\theta)\in H^{s_x,s_z}(\Omega;\mathbb R)^2:
                   \bar v=\bar\theta=0\bigr\}
\end{equation}
and
\begin{equation}
 \|U\|_{X^{s_x,s_z}}^2
 :=\gamma\|v\|_{H^{s_x,s_z}}^2
   +g\beta\|\theta\|_{H^{s_x,s_z}}^2.
\end{equation}
The mean constraints are imposed in the Fourier sense. Symmetric Fourier
truncation shows that constrained trigonometric polynomials are dense
in each of the constrained phase spaces defined above.

We equip \(\mathcal L\) from \eqref{eq:2.1.6} with its maximal Fourier
multiplier domain
\begin{equation}
\label{eq:2.1.23}
 \operatorname{Dom}_{X^s_{\mathbb C}}(\mathcal L)
 :=\bigl\{(v,\theta)\in X^s_{\mathbb C}:
          \partial_z^{-1}\partial_xv,\partial_z^{-1}\partial_x\theta\in H^s(\Omega;\mathbb C)\bigr\}.
\end{equation}
The real operator is the restriction of \(\mathcal L\) to
\(\operatorname{Dom}_{X^s_{\mathbb C}}(\mathcal L)\cap X^s\).

In the energy variables, the linear equation
\(\partial_tU=\mathcal LU\) becomes
\[
 \partial_t
 \begin{pmatrix}
  \sqrt\gamma\,\widehat v(k)\\[1mm]
  \sqrt{g\beta}\,\widehat\theta(k)
 \end{pmatrix}
 =N\lambda(k)
 J
 \begin{pmatrix}
  \sqrt\gamma\,\widehat v(k)\\[1mm]
  \sqrt{g\beta}\,\widehat\theta(k)
 \end{pmatrix},
 \qquad
 J:=\begin{pmatrix}0&-1\\[1mm]1&0\end{pmatrix}.
\]
Since \(J^2=-I_2\), the Fourier solution is
\begin{equation}
\label{eq:2.1.24}
 \begin{pmatrix}
  \sqrt\gamma\,\widehat v(t,k)\\[1mm]
  \sqrt{g\beta}\,\widehat\theta(t,k)
 \end{pmatrix}
 =
 \begin{pmatrix}
  \cos(N\lambda(k)t)&-\sin(N\lambda(k)t)\\[1mm]
  \sin(N\lambda(k)t)& \cos(N\lambda(k)t)
 \end{pmatrix}
 \begin{pmatrix}
  \sqrt\gamma\,\widehat v_0(k)\\[1mm]
  \sqrt{g\beta}\,\widehat\theta_0(k)
 \end{pmatrix}.
\end{equation}
For a pair \(G=(G_v,G_\theta)\), define
\begin{equation}
\label{eq:2.1.25}
 Z_\sigma[G]:=\sqrt\gamma\,G_v+i\sigma\sqrt{g\beta}\,G_\theta,
 \qquad \sigma\in\{\pm1\}.
\end{equation}
We write \(Z_\sigma:=Z_\sigma[U]\) and
\(Z_{j,\sigma}:=Z_\sigma[U_j]\). These variables diagonalize the
linear evolution, with inverse transformation
\begin{equation}
\label{eq:2.1.26}
        v=\frac{Z_++Z_-}{2\sqrt\gamma},
        \qquad
        \theta=\frac{Z_+-Z_-}{2i\sqrt{g\beta}}.
\end{equation}
The signed branch frequencies are
\begin{equation}
        \omega_\sigma(n,m)
        =\sigma N\frac{L_zn}{L_xm},
        \qquad \sigma\in\{\pm1\}.
\end{equation}
The branch label \(\sigma\) does not determine the sign of
\(\omega_\sigma(k)\), since \(\lambda(k)\) may have either sign or
vanish. In these variables, the linear equations and their solutions are
\begin{equation}
\label{eq:2.1.28}
        \partial_tZ_\sigma=i\sigma N\partial_z^{-1}\partial_xZ_\sigma,
        \qquad
        \widehat Z_\sigma(t,k)
        =e^{it\omega_\sigma(k)}\widehat Z_\sigma(0,k).
\end{equation}
The wave transformation also satisfies
\begin{equation}
\label{eq:2.1.29}
        \|U\|_{X^s}^2
        =\frac12\|Z_+\|_{H^s}^2
         +\frac12\|Z_-\|_{H^s}^2.
\end{equation}
The same identity holds on \(X^s_{\mathbb C}\). Hence
\(U\mapsto 2^{-\frac12}(Z_+,Z_-)\) is unitary from \(X^s_{\mathbb C}\)
onto the product of two zero-vertical-mean \(H^s(\Omega;\mathbb C)\)
spaces. The interaction formulas use the unnormalized variables
\(Z_\sigma\).
For real-valued states,
\begin{equation}
\label{eq:2.1.30}
        \widehat Z_\sigma(-k)
        =\overline{\widehat Z_{-\sigma}(k)}.
\end{equation}

The Fourier representation establishes the following properties of the
linear evolution.

\begin{proposition}
\label{prop:2.1.1-unitary-linear-evolution}
Let \(s\in\mathbb R\). The operator \(\mathcal L\), with domain
\eqref{eq:2.1.23}, is densely defined and skew-adjoint on
\(X^s_{\mathbb C}\). It generates the strongly continuous unitary group
\(\{e^{t\mathcal L}\}_{t\in\mathbb R}\) given by \eqref{eq:2.1.24}.
This group leaves \(X^s\) invariant and satisfies, for every
\(U_0\in X^s_{\mathbb C}\),
\begin{equation}
 \|e^{t\mathcal L}U_0\|_{X^s_{\mathbb C}}
 =\|U_0\|_{X^s_{\mathbb C}},
 \qquad t\in\mathbb R.
\end{equation}
\end{proposition}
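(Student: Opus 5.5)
The plan is to diagonalize \(\mathcal L\) by the Fourier transform and the weighted energy variables, so that every claim reduces to the corresponding statement for a multiplication operator by an imaginary function on a weighted \(\ell^2\) space.

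\textbf{Unitary reduction.} Define
\[
 \mathcal F U:=\bigl(\sqrt\gamma\,\widehat v(k),\sqrt{g\beta}\,\widehat\theta(k)\bigr)_{k\in K}.
\]
By \eqref{eq:2.1.15} and the definition of the \(X^s_{\mathbb C}\) norm, \(\mathcal F\) is a unitary map from \(X^s_{\mathbb C}\) onto the Hilbert space \(\ell^2_s(K;\mathbb C^2)\) with weight \(L_xL_z\langle\xi_k\rangle^{2s}\). The mean constraints are exactly what removes the modes \(m=0\). Under \(\mathcal F\), \(\mathcal L\) becomes the fibrewise multiplier \(A(k):=N\lambda(k)J\). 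Indeed, \(\gamma\lambda\widehat v\) rescales to \(N\lambda\) acting on \(\sqrt{g\beta}\,\widehat\theta\), and similarly for the other component. The maximal domain \eqref{eq:2.1.23} corresponds to \(\{a:\ \lambda a\in\ell^2_s\}\), because the weight \(\langle\xi_k\rangle^{2s}\) is common to both components and \(J\) is an isometry of \(\mathbb C^2\).

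\textbf{Density and skew-adjointness.} Density follows because finitely supported sequences, that is, constrained trigonometric polynomials, lie in the domain. Since \(J\) is real antisymmetric, \(A(k)^*=-A(k)\) pointwise with respect to the standard inner product on \(\mathbb C^2\). The weight is scalar, so it does not affect this. Skew-adjointness of the maximal multiplication operator is then the standard fact for maximal multiplication operators by measurable normal matrix functions, which I will verify directly.
\begin{itemize}
\item[(i)] For \(a,b\) in the domain, \(\langle Aa,b\rangle=-\langle a,Ab\rangle\) by summation.
\item[(ii)] If \(b\in\operatorname{Dom}(A^*)\), testing against \(a\) supported on a single \(k\) shows \((A^*b)(k)=-A(k)b(k)\). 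Hence \(Ab\in\ell^2_s\), so \(b\in\operatorname{Dom}(A)\) and \(A^*=-A\).
\end{itemize}
One could alternatively use the unitary diagonalization \(Z_\pm\) and the identity \eqref{eq:2.1.29} to reduce to scalar multipliers \(i\omega_\sigma(k)\). This is equivalent and makes the next step cleaner.

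\textbf{Group, invariance, and isometry.} Stone's theorem, or directly the spectral calculus, gives the unitary group \(e^{tA(k)}\). Since \(J^2=-I\), this equals the rotation matrix in \eqref{eq:2.1.24}.
\begin{itemize}
\item[(i)] Pointwise unitarity of the rotation gives the norm identity.
\item[(ii)] Strong continuity follows by dominated convergence. Each fibre is continuous in \(t\), and \(|e^{tA(k)}a(k)-a(k)|^2\le4|a(k)|^2\) is summable against the weight.
\item[(iii)] The group law and the fact that the generator is \(A\) follow from differentiating fibrewise on the domain, again with dominated convergence using \(|e^{tA}a-a|\le|t||Aa|\).
\end{itemize}
For invariance of \(X^s\), reality of \(U\) is the condition \(\widehat v(-k)=\overline{\widehat v(k)}\), and likewise for \(\theta\). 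Because \(\lambda(-k)=\lambda(k)\) and the rotation matrix is real, this symmetry is preserved. The mean constraints are preserved trivially, since the group acts fibrewise on \(k\in K\).

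\textbf{Main obstacle.} I expect no real difficulty. The only delicate point is the careful identification of the maximal domain with the multiplier domain and the proof that \(A^*\subset-A\), that is, that the adjoint's domain is no larger than the domain of \(A\). This is handled by testing against single Fourier modes.
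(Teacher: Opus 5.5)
Your proposal is correct and follows essentially the paper's route: you diagonalize \(\mathcal L\) by Fourier series as a maximal multiplication operator on a weighted \(\ell^2\) space over \(K\), test against single modes to show \(\operatorname{Dom}(\mathcal L^*)=\operatorname{Dom}(\mathcal L)\), and use dominated convergence for strong continuity and for the generator. The differences are minor. You keep the real \(2\times2\) block \(N\lambda(k)J\) in the energy variables rather than the scalar multipliers \(i\omega_\sigma(k)\) in the \(Z_\pm\) coordinates, which makes invariance of \(X^s\) immediate from \(\lambda(-k)=\lambda(k)\) and the real rotation matrix. You also verify explicitly only that the generator of the fibrewise group extends \(\mathcal L\); the reverse inclusion follows from maximality of skew-adjoint operators (or Stone's theorem), or, as in the paper, by identifying the limit of the difference quotients coordinatewise.
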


\begin{proof}
By \eqref{eq:2.1.29}, the map
\(U\mapsto(c_\sigma(k))_{k,\sigma}\), where
\(c_\sigma(k):=2^{-\frac12}\widehat Z_\sigma(k)\), is unitary from
\(X^s_{\mathbb C}\) onto \(\ell_s^2(K;\mathbb C^2)\), equipped with
the norm
\[
 \|c\|_{\ell_s^2}^2
 :=L_xL_z\sum_{k\in K}\sum_{\sigma=\pm1}
       \langle\xi_k\rangle^{2s}|c_\sigma(k)|^2.
\]
In these coordinates, \(\mathcal L\) becomes
\((\mathcal Mc)_\sigma(k)=i\omega_\sigma(k)c_\sigma(k)\), with
maximal domain
\[
 \operatorname{Dom}(\mathcal M)
 =\{c\in\ell_s^2:(\omega_\sigma(k)c_\sigma(k))_{k,\sigma}
                  \in\ell_s^2\}.
\]
Finite sequences belong to this domain and are dense in \(\ell_s^2\).
For \(h\in\operatorname{Dom}(\mathcal M^*)\), testing against a single
frequency and branch identifies
\((\mathcal M^*h)_\sigma(k)=-i\omega_\sigma(k)h_\sigma(k)\).
Conversely, if \((\omega_\sigma h_\sigma)\in\ell_s^2\), the adjoint
identity follows by Cauchy--Schwarz. Therefore,
\[
 \operatorname{Dom}(\mathcal M^*)=\operatorname{Dom}(\mathcal M)
 \quad\text{and}\quad \mathcal M^*=-\mathcal M.
\]
Thus \(\mathcal L\) is densely defined and skew-adjoint.

The multipliers \(e^{it\omega_\sigma(k)}\) define an isometric group.
Dominated convergence establishes strong continuity:
\[
 \|e^{t\mathcal L}U-U\|_{X^s_{\mathbb C}}^2
 =L_xL_z\sum_{k\in K}\sum_{\sigma=\pm1}
 \langle\xi_k\rangle^{2s}|e^{it\omega_\sigma(k)}-1|^2
 |c_\sigma(k)|^2\longrightarrow0
\]
as \(t\to0\). For \(c\in\operatorname{Dom}(\mathcal M)\), the bound
\[
 \left|\frac{e^{it\omega_\sigma(k)}-1}{t}
              -i\omega_\sigma(k)\right|\le2|\omega_\sigma(k)|
\]
identifies the strong derivative as \(\mathcal Mc\) by dominated
convergence. Conversely, convergence of the difference quotients in
\(\ell_s^2\) identifies their limit coordinatewise as
\((i\omega_\sigma c_\sigma)_{k,\sigma}\). Membership of this sequence
in \(\ell_s^2\) is precisely the maximal domain condition. The generator
is therefore \(\mathcal L\). Finally,
\(\omega_\sigma(-k)=\omega_\sigma(k)=-\omega_{-\sigma}(k)\)
shows that the group preserves \eqref{eq:2.1.30} and thus \(X^s\).
\end{proof}

The bound \eqref{eq:2.1.17} also implies
\(X^s_{\mathbb C}\subset
\operatorname{Dom}_{X^{s-1}_{\mathbb C}}(\mathcal L)\).
With the tensor-product weights from \eqref{eq:2.1.20}, the wave
transformation satisfies
\begin{equation}
\label{eq:2.1.32}
 \|U\|_{X^{s_x,s_z}}^2
 =\frac12\|Z_+\|_{H^{s_x,s_z}}^2
   +\frac12\|Z_-\|_{H^{s_x,s_z}}^2
\end{equation}
and
\begin{equation}
 \|e^{t\mathcal L}U\|_{X^{s_x,s_z}}
 =\|U\|_{X^{s_x,s_z}},\qquad t\in\mathbb R.
\end{equation}
Dominated convergence also proves strong continuity on
\(X^{s_x,s_z}\) for all \(s_x,s_z\in\mathbb R\).

It follows from the diagonal representation that, on \(X^s_{\mathbb C}\),
\begin{equation}
 \sigma_{\mathrm p}(\mathcal L)=iN\frac{L_z}{L_x}\mathbb Q,
 \qquad
 \sigma(\mathcal L)=i\mathbb R,
\end{equation}
and
\begin{equation}
 \sigma_{\mathrm c}(\mathcal L)
 =i\mathbb R\setminus iN\frac{L_z}{L_x}\mathbb Q,
 \qquad
 \sigma_{\mathrm r}(\mathcal L)=\varnothing.
\end{equation}
Indeed, the diagonal entries \(i\omega_\sigma(k)\) range over
\(iN\frac{L_z}{L_x}\mathbb Q\), which is exactly the point spectrum.
This set is dense in \(i\mathbb R\). Since the spectrum of a skew-adjoint
operator is closed and contained in \(i\mathbb R\), the full spectrum
is \(i\mathbb R\). For
\(\eta\in\mathbb R\),
\[
 \bigl(\operatorname{Ran}(\mathcal L-i\eta)\bigr)^\perp
 =\ker\bigl((\mathcal L-i\eta)^*\bigr)
 =\ker(\mathcal L-i\eta).
\]
Thus injectivity of \(\mathcal L-i\eta\) implies density of its range,
so the residual spectrum is empty. If
\(\eta\notin N\frac{L_z}{L_x}\mathbb Q\), choose normalized
single-mode eigenvectors \(U_j\) in the \(+\) branch with
\(N\lambda(k_j)\to\eta\). Then
\[
 \|U_j\|_{X^s_{\mathbb C}}=1
 \quad\text{and}\quad
 \|(\mathcal L-i\eta)U_j\|_{X^s_{\mathbb C}}
 =|N\lambda(k_j)-\eta|\longrightarrow0.
\]
If \(\mathcal L-i\eta\) were surjective, its inverse would be bounded
by the closed graph theorem, contradicting this sequence. These points
therefore belong to the continuous spectrum.

A single Fourier mode rules out a uniform
\(L^1\)--\(L^\infty\) estimate with a time factor tending to zero.
For \(k=(n,m)\in K\) with \(n\ne0\), the real linear wave
\[
 v(t)=\frac{\cos(\xi_k\cdot(x,z)+N\lambda(k)t)}{\sqrt\gamma}
 \quad\text{and}\quad
 \theta(t)=\frac{\sin(\xi_k\cdot(x,z)+N\lambda(k)t)}{\sqrt{g\beta}}
\]
has finite initial \(L^1\) norms and satisfies
\(\|v(t)\|_{L^\infty}+\|\theta(t)\|_{L^\infty}
=\gamma^{-\frac12}+(g\beta)^{-\frac12}\) for every \(t\).
Such decay is therefore impossible even for smooth data with zero
vertical means.

For the quadratic interactions, a convolution triple \(k=k_1+k_2\)
is called admissible if \(k_j=(n_j,m_j)\in K\) and \(k=(n,m)\in K\),
equivalently \(m_1m_2m\ne0\). Define its phase by
\begin{equation}
\label{eq:2.1.36}
 \Phi_{\sigma\mu\nu}(k;k_1,k_2)
 :=\omega_\sigma(k)-\omega_\mu(k_1)-\omega_\nu(k_2),
 \qquad \sigma,\mu,\nu\in\{\pm1\}.
\end{equation}
The Fourier symbol of \(b\) is
\begin{equation}
\label{eq:2.1.37}
 m_b(k_1,k_2)
 :=i\kappa_x\left(\frac{n_1m_2}{m_1}-n_2\right)
 =i\kappa_x\frac{n_1m_2-n_2m_1}{m_1}.
\end{equation}
For a fixed convolution triple, set
\[
        m_{12}:=m_b(k_1,k_2),
        \qquad
        m_{21}:=m_b(k_2,k_1).
\]
For output branch \(\sigma\) and ordered input branches \((\mu,\nu)\),
the polarized wave coefficient of
\(\mathcal N_2(H_1,H_2)+\mathcal N_2(H_2,H_1)\) is
\begin{equation}
\label{eq:2.1.38}
 \widetilde q_{\sigma\mu\nu}(k;k_1,k_2)
 =\frac{
 \mathbf 1_{\{\nu=\sigma\}}m_{12}
 +\mathbf 1_{\{\mu=\sigma\}}m_{21}}
 {2\sqrt\gamma}.
\end{equation}
Section~\ref{sec:3-quadratic-wave-interactions} derives this formula from
\eqref{eq:2.1.12}. The coefficient includes both input orders without an
additional factor \(\frac12\).

An admissible interaction is resonant if \(\Phi_{\sigma\mu\nu}=0\).
A resonance is quadratically active when
\(\widetilde q_{\sigma\mu\nu}\ne0\) and quadratically transparent
when \(\widetilde q_{\sigma\mu\nu}=0\). We call a resonance nontrivial
if \((n_1,n_2)\ne(0,0)\).

\subsection{Main results}
\label{subsec:2.2-main-results}

The results below describe the sharp derivative loss of the second
Picard operator, classify exact resonances and transparent Fourier
supports, and establish uniform horizontal-mean bounds and resonant
averaging. All proofs are given in
Section~\ref{sec:4-exact-resonances-picard-obstruction}.

For \(H_1,H_2\in X^\infty\) and \(t\in\mathbb R\), define the
polarized second Picard operator by
\begin{equation}
\label{eq:2.2.1}
 \mathcal B_t(H_1,H_2)
 :={}\int_0^t e^{(t-\tau)\mathcal L}
 \Bigl[
 \mathcal N_2\bigl(e^{\tau\mathcal L}H_1,
                  e^{\tau\mathcal L}H_2\bigr)
 +\mathcal N_2\bigl(e^{\tau\mathcal L}H_2,
                  e^{\tau\mathcal L}H_1\bigr)
 \Bigr] \,\mathrm d\tau.
\end{equation}

\Needspace{6\baselineskip}
The first theorem determines the sharp derivative loss of
\(\mathcal B_t\).

\begin{theorem}[Sharp second Picard obstruction]
\label{thm:2.2.1-second-picard-obstruction}
Let \(\mathcal B_t\) be defined by \eqref{eq:2.2.1}.
\begin{enumerate}[(i)]
\item Let \(t\in\mathbb R\setminus\{0\}\), \(s\in\mathbb R\), and
      \(0\le\delta<1\). Then \(\mathcal B_t\) has no bounded bilinear
      extension from \(X^s\times X^s\) to \(X^{s-\delta}\). More precisely,
      there exist real trigonometric polynomials
      \(H,G_M\in X^\infty\), \(M\ge2\), with \(H\) independent of \(M\),
      such that
      \[
       \|H\|_{X^s}
       +\sup_{M\ge2}\|G_M\|_{X^s}
       \le C,
      \]
      and, for all sufficiently large \(M\),
      \begin{equation}
      \label{eq:2.2.2}
       \bigl\|\mathcal B_t(H,G_M)\bigr\|_{X^{s-\delta}}
       \ge c|t|M^{2(1-\delta)}.
      \end{equation}
      Here \(c,C>0\) are independent of \(M\).

\item Let \(s\ge2\) and \(t\in\mathbb R\). Then \(\mathcal B_t\) extends
      uniquely to a bounded bilinear map from \(X^s\times X^s\) to
      \(X^{s-1}\), and
      \begin{equation}
      \label{eq:2.2.3}
       \|\mathcal B_t(H_1,H_2)\|_{X^{s-1}}
       \le C_s|t|\,
       \|H_1\|_{X^s}\|H_2\|_{X^s}.
      \end{equation}
\end{enumerate}
Thus, for every \(s\ge2\) and fixed \(t\ne0\), the loss of one Sobolev
derivative is sharp.
\end{theorem}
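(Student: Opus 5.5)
Both parts rest on the Fourier--Duhamel representation in the wave variables \(Z_\sigma\). Conjugating by the group \eqref{eq:2.1.28}, the \(\sigma\)-component of \(\mathcal B_t(H_1,H_2)\) at an output frequency \(k\in K\) reads
\[
 \widehat{Z_\sigma[\mathcal B_t]}(k)=e^{it\omega_\sigma(k)}\sum_{k_1+k_2=k}\sum_{\mu,\nu}
 \widetilde q_{\sigma\mu\nu}(k;k_1,k_2)\,
 \frac{e^{-it\Phi_{\sigma\mu\nu}}-1}{-i\Phi_{\sigma\mu\nu}}\,
 \widehat{Z_{\mu}[H_1]}(k_1)\widehat{Z_{\nu}[H_2]}(k_2),
\]
where the time factor is interpreted as \(t\) when \(\Phi_{\sigma\mu\nu}=0\). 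The sum runs over admissible triples only, since the projection \(\mathcal P\) kills \(m=0\). I would first verify that this matches \eqref{eq:2.1.38}, using \(\overline{b(f,g)}=-\partial_x\overline{fg}\) and the inverse formulas \eqref{eq:2.1.26}. The key observation is that \(|(e^{-it\Phi}-1)/\Phi|\le|t|\) for every phase.

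For part (i), I would use the resonant family from the introduction: \(k_1=(1,1)\), \(k_2=(-M^2,M)\), \(k=(1-M^2,1+M)\), all on the same branch \(\sigma=+\). I would check that \(\lambda(k)=\lambda(k_1)+\lambda(k_2)\), i.e.
\[
\frac{1-M^2}{1+M}=1-M,
\]
which holds. Hence \(\Phi_{+++}=0\) and \(\widetilde q_{+++}=\frac{i\kappa_x}{2\sqrt\gamma}(M^2-1)\).

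I would take \(H\) to be the real state whose \(Z_+\) coefficient is \(1\) at \(k_1\) and whose \(Z_-\) coefficient is \(1\) at \(-k_1\), in accordance with \eqref{eq:2.1.30}, with all other coefficients zero. Similarly, \(G_M\) would be the real state with coefficient \(M^{-s}\) (suitably scaled so that \(\|G_M\|_{X^s}\simeq1\)) at \(k_2\) and its conjugate partner at \(-k_2\). The inputs then involve only the four frequencies \(\pm k_1,\pm k_2\), so the outputs lie in \(\{\pm k_1\pm k_2\}\).

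To isolate the selected output, I would check that \(k\) and \(-k\) differ from \(\pm(k_1-k_2)\) for \(M\ge2\). At the \(Z_+\) output \(k\), only the pair \((k_1,k_2)\) with branches \((+,+)\) contributes, together with its reversed order, which the polarized coefficient already includes. The coefficient of \(\mathcal B_t(H,G_M)\) at \(k\) therefore has modulus \(|t|\,\frac{\kappa_x}{2\sqrt\gamma}(M^2-1)M^{-s}\). Multiplying by the weight \(\langle\xi_k\rangle^{s-\delta}\simeq M^{2(s-\delta)}\), and noting that \(\langle\xi_{k_2}\rangle^{s}\simeq M^{2s}\), so the normalization of \(G_M\) is \(M^{-2s}\) rather than \(M^{-s}\), I get a lower bound
\[
c|t|M^2\cdot M^{-2s}\cdot M^{2s-2\delta}=c|t|M^{2(1-\delta)}.
\]
The two-sided comparisons \(\langle\xi_k\rangle\simeq\langle\xi_{k_2}\rangle\simeq M^2\) hold for every sign of \(s\). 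Finally, the \(X^{s-\delta}\) norm dominates \(2^{-1/2}\) times a single \(Z_+\) coefficient weight by \eqref{eq:2.1.29}.

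For part (ii), the trivial bound on the time factor reduces \eqref{eq:2.2.3} to a fixed-time estimate. I would prove
\[
\Bigl\|\sum |\widetilde q|\,|a(k_1)||b(k_2)|\Bigr\|_{\ell^2_{s-1}}\lesssim\|a\|_{\ell^2_s}\|b\|_{\ell^2_s}
\]
for the positive (absolute-value) trilinear form. This is the main obstacle, because the absolute values destroy the cancellation structure of \(b\), so I need a sharp estimate on the symbol itself.

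The plan is to bound \(|m_{12}|\le\kappa_x(|n_1||m_2|/|m_1|+|n_2|)\). The term \(|n_2|\) is handled by placing one derivative on the second factor. For the term \(|n_1||m_2|/|m_1|\), which is \(|\lambda(k_1)|\,|m_2|\), I would split according to which frequency dominates.

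If \(|k_2|\gtrsim|k|\), the weight \(|m_2|\) is absorbed by \(\langle\xi_{k_2}\rangle\), and \(|\lambda(k_1)|\le C\langle\xi_{k_1}\rangle\) costs one derivative on the low factor. Since \(s-1\ge1\), this leaves a convolution of an \(\ell^2_{s-1}\) sequence (from the factor at \(k_1\)) with \(\ell^2_{s-1}\). The endpoint issue is that \(H^1\) is not an algebra in two dimensions. I would handle it as the paper hints, with mixed norms: the factor \(|\lambda(k_1)|\langle\xi_{k_1}\rangle^{s-1}|a(k_1)|\), which has derivative order \(s-1\), has its extra structure \(1/|m_1|\) summable in \(m_1\). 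Summing over \(m_1\) by Cauchy--Schwarz gives an \(\ell^2_{n_1}\ell^1_{m_1}\)-type bound, which suffices by Young's inequality in the horizontal direction together with the extra half derivative.

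If instead \(|k_1|\gg|k_2|\), then \(|m_2|\lesssim\langle\xi_{k_2}\rangle\) and \(|\lambda(k_1)|\lesssim\langle\xi_{k_1}\rangle\simeq\langle\xi_k\rangle\). This is the high--low case, in which the loss of one derivative lands on the output, and it is estimated by a standard convolution argument using \(s\ge2\) for the summability of \(\langle\xi_{k_2}\rangle^{1-s}\) in \(\ell^2\).

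Density of trigonometric polynomials then gives the unique bounded extension.
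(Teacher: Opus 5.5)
Part (i) is correct and is the paper's argument. You use the same resonant family and the same real inputs: your \(H,G_M\) are \(E_+(k_1;A_1)\) and \(E_+(k_2;A_M)\) with \(A_M\simeq\langle\xi_{k_2}\rangle^{-s}\simeq M^{-2s}\). You also separate \(\pm k\) from \(\pm(k_1-k_2)\) in the same way.

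For part (ii) you take a different route from the paper. The paper simply combines the fixed-time bound \eqref{eq:3.17} of Proposition~\ref{prop:3.1-quadratic-endpoint-bounds} with unitarity of \(e^{\tau\mathcal L}\) and Minkowski's inequality in \(\tau\). Bounding the time factor by \(|t|\) and passing to the positive Fourier form is legitimate, and for \(s>2\) your two-region splitting closes.

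\textbf{The endpoint \(s=2\) is not handled.} The theorem includes this case, and it is the only delicate one. Write \(\widehat f,\widehat g\) for your \(a,b\). After \(|\lambda(k_1)|\le C\langle\xi_{k_1}\rangle\) and \(|m_2|\le C\langle\xi_{k_2}\rangle\), your high--low case needs \(\langle\xi_{k_2}\rangle|\widehat g(k_2)|\in\ell^1\), that is, \(\langle\xi\rangle^{1-s}\in\ell^2(\mathbb Z^2)\). This requires \(s>2\), since \(\sum_{k\in\mathbb Z^2}\langle\xi_k\rangle^{-2}\) diverges logarithmically.

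The failure is not an artifact of Cauchy--Schwarz. With the isotropic weight \(\langle\xi_{k_1}\rangle^2\langle\xi_{k_2}\rangle\), the high--low positive form is unbounded from \(H^2\times H^2\) to \(H^1\). To see this, take \(\langle\xi_{k_1}\rangle^2|\widehat f(k_1)|\) to be the indicator of a ball of radius \(R\) about a distant frequency. Take \(\langle\xi_{k_2}\rangle|\widehat g(k_2)|=\langle\xi_{k_2}\rangle^{-2}\mathbf 1_{\{|k_2|\le R\}}\). The ratio of output to input norms then grows like \((\log R)^{1/2}\). So the factor \(1/|m_1|\) in \(\lambda(k_1)\) must be kept precisely in the high--low region.

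\textbf{Where your anisotropic device belongs.} The factor you actually write, \(|\lambda(k_1)|\langle\xi_{k_1}\rangle^{s-1}|\widehat f(k_1)|\), carries the output weight, so it is the high--low factor. At \(s=2\) the argument runs as follows:
\begin{enumerate}[(i)]
\item Cauchy--Schwarz in \(m_1\), using \(\kappa_x^2n_1^2\le\langle\xi_{k_1}\rangle^2\) and \(\sum_{m_1\ne0}m_1^{-2}<\infty\), puts this factor in \(\ell^2_n\ell^1_m\) (the \(\ell^1\) norm in \(m\), then \(\ell^2\) in \(n\)), with norm \(\lesssim\|f\|_{H^2}\).
\item Cauchy--Schwarz in \(n_2\) puts \(|m_2||\widehat g(k_2)|\) in \(\ell^1_n\ell^2_m\), with norm \(\lesssim\|g\|_{H^2}\).
\item Young's inequality applied separately in \(m\) (\(\ell^1*\ell^2\)) and in \(n\) (\(\ell^2*\ell^1\)) then closes the estimate.
\end{enumerate}
This is the Fourier form of the paper's term \(\|\partial_xa\|_{L_x^2L_z^\infty}\|\partial_zg\|_{L_x^\infty L_z^2}\).

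In the region \(|k_2|\gtrsim|k|\), by contrast, no anisotropy is needed. There the \(\ell^2_n\ell^1_m\) bound alone would not suffice anyway, because the partner \(\langle\xi_{k_2}\rangle^2|\widehat g(k_2)|\) lies only in \(\ell^2\). Instead, \(\sum_{k_1\in K}\lambda(k_1)^2\langle\xi_{k_1}\rangle^{-4}<\infty\) puts \(|\lambda(k_1)||\widehat f(k_1)|\) directly in \(\ell^1\).

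\textbf{The \(|n_2|\) term.} In the high--low region at \(s=2\), this term does not close by moving the whole derivative onto the low factor, since \(\sum n_2^2\langle\xi_{k_2}\rangle^{-4}\) also diverges. You need to split the weight as \(\langle\xi_{k_1}\rangle^{3/2}\langle\xi_{k_2}\rangle^{1/2}\), which is the Fourier analogue of the paper's \(L^4\times L^4\) bound.
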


We next classify the exact resonances of \eqref{eq:2.1.36} and compute
their polarized coefficients.
For each ordered pair \(m_1,m_2\in\mathbb Z_*\) with
\(m=m_1+m_2\ne0\), set
\[
 d_{\mathrm s}:=\gcd(m_1^2,m_2^2)
 \quad\text{and}\quad
 d_+:=\gcd(m_1^2,|m_2(m_2+2m_1)|),
\]
\[
 d_-:=\gcd(m_2^2,|m_1(m_1+2m_2)|)
\]
and
\[
 d_{\mathrm o}:=\gcd\bigl(|m_2(2m_1+m_2)|,|m_1(m_1+2m_2)|\bigr).
\]

\begin{theorem}[Exact lattice resonances]
\label{thm:2.2.2-exact-lattice-resonances}
Fix \(\sigma\in\{\pm1\}\) and an ordered pair
\(m_1,m_2\in\mathbb Z_*\) with \(m=m_1+m_2\ne0\).
In each channel, an admissible triple is resonant if and only if its
horizontal indices \((n_1,n_2,n)\) have the form below for a unique
\(\ell\in\mathbb Z\). The corresponding polarized coefficients are
as follows.
\begin{enumerate}[(i)]
\item \emph{Same-branch interactions:}
\((\mu,\nu)=(\sigma,\sigma)\). The phase relation is
\begin{equation}
 \Phi_{\sigma\sigma\sigma}(k;k_1,k_2)=0
 \quad\Longleftrightarrow\quad
 n_1m_2^2+n_2m_1^2=0,
\end{equation}
and the parametrization is
\begin{equation}
\label{eq:2.2.5}
 (n_1,n_2,n)
 =\ell\left(
 \frac{m_1^2}{d_{\mathrm s}},
 -\frac{m_2^2}{d_{\mathrm s}},
 \frac{m_1^2-m_2^2}{d_{\mathrm s}}
 \right).
\end{equation}
On this resonant set,
\begin{equation}
\label{eq:2.2.6}
 \widetilde q_{\sigma\sigma\sigma}
 =\frac{i\kappa_x\ell}{2\sqrt\gamma}
  \frac{m_2^2-m_1^2}{d_{\mathrm s}}
 =-\frac{i\kappa_x n}{2\sqrt\gamma}.
\end{equation}
The interaction is active precisely when
\(\ell(m_2^2-m_1^2)\ne0\). Since \(m_1+m_2\ne0\), a nontrivial
same-branch resonance is transparent precisely when \(m_1=m_2\).

\item \emph{First mixed channel:}
\((\mu,\nu)=(-\sigma,\sigma)\). In this case,
\begin{equation}
\label{eq:2.2.7}
 (n_1,n_2,n)
 =\ell\left(
 \frac{m_1^2}{d_+},
 \frac{m_2(m_2+2m_1)}{d_+},
 \frac{m^2}{d_+}
 \right).
\end{equation}
On this set,
\begin{equation}
\label{eq:2.2.8}
 \widetilde q_{\sigma,-\sigma,\sigma}
 =-\frac{i\kappa_x\ell}{2\sqrt\gamma}
   \frac{m_2m}{d_+}
 =-\frac{i\kappa_x n}{2\sqrt\gamma}\frac{m_2}{m}.
\end{equation}
Hence every resonance with \(\ell\ne0\) is active.

\item \emph{Second mixed channel:}
\((\mu,\nu)=(\sigma,-\sigma)\). In this case,
\begin{equation}
\label{eq:2.2.9}
 (n_1,n_2,n)
 =\ell\left(
 \frac{m_1(m_1+2m_2)}{d_-},
 \frac{m_2^2}{d_-},
 \frac{m^2}{d_-}
 \right).
\end{equation}
On this set,
\begin{equation}
\label{eq:2.2.10}
 \widetilde q_{\sigma,\sigma,-\sigma}
 =-\frac{i\kappa_x\ell}{2\sqrt\gamma}
   \frac{m_1m}{d_-}
 =-\frac{i\kappa_x n}{2\sqrt\gamma}\frac{m_1}{m}.
\end{equation}
Hence every resonance with \(\ell\ne0\) is active.

\item \emph{Opposite-input channel:}
\((\mu,\nu)=(-\sigma,-\sigma)\). The parametrization is
\begin{equation}
\label{eq:2.2.11}
 (n_1,n_2,n)
 =\ell\left(
 \frac{m_1(m_1+2m_2)}{d_{\mathrm o}},
 -\frac{m_2(2m_1+m_2)}{d_{\mathrm o}},
 \frac{m_1^2-m_2^2}{d_{\mathrm o}}
 \right).
\end{equation}
For every admissible convolution triple, resonant or not,
\begin{equation}
        \widetilde q_{\sigma,-\sigma,-\sigma}=0.
\end{equation}
Thus this channel is quadratically transparent.
\end{enumerate}
\end{theorem}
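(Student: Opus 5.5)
The proof is a direct computation on the lattice. Write \(c:=N\frac{L_z}{L_x}\), so that \(\omega_\sigma(k)=\sigma c\,n/m\). For \((\mu,\nu)=(\epsilon_1\sigma,\epsilon_2\sigma)\) with \(\epsilon_j\in\{\pm1\}\),
\[
 \Phi_{\sigma\mu\nu}=\sigma c\Bigl(\frac{n}{m}-\epsilon_1\frac{n_1}{m_1}-\epsilon_2\frac{n_2}{m_2}\Bigr).
\]
For an admissible triple we have \(m_1m_2m\ne0\). Hence resonance is equivalent to the vanishing of the integer
\[
 D_{\epsilon_1\epsilon_2}:=n m_1m_2-\epsilon_1 n_1m_2m-\epsilon_2 n_2m_1m .
\]
We substitute \(n=n_1+n_2\) and \(m=m_1+m_2\) and expand. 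This reduces each channel to a homogeneous linear relation \(An_1=Bn_2\) with integer coefficients depending only on \((m_1,m_2)\).

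\emph{Same-branch channel} \((\epsilon_1,\epsilon_2)=(1,1)\): \(D=-(n_1m_2^2+n_2m_1^2)\).

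\emph{First mixed channel} \((-1,1)\): \(D=n_1m_2(m_2+2m_1)-n_2m_1^2\).

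\emph{Second mixed channel} \((1,-1)\): this follows from the first by swapping indices, giving \(D=-\bigl(n_1m_2^2-n_2m_1(m_1+2m_2)\bigr)\).

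\emph{Opposite-input channel} \((-1,-1)\): \(D=n_1m_2(2m_1+m_2)+n_2m_1(m_1+2m_2)\).

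Each of these is routine bookkeeping, for instance \(nm_1m_2+n_1m_2m-n_2m_1m=n_1m_2(2m_1+m_2)-n_2m_1^2\).

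Next we solve \(An_1=Bn_2\) over \(\mathbb Z\), assuming \((A,B)\ne(0,0)\). Put \(d=\gcd(|A|,|B|)\). Then \(A/d\) and \(B/d\) are coprime, so every solution is \((n_1,n_2)=\ell(B/d,A/d)\) for a unique \(\ell\in\mathbb Z\). The sign convention is absorbed into \(\ell\), which is why the paper's \(d\)'s use absolute values. The horizontal output is then \(n=n_1+n_2\), which gives, e.g., \(\ell(m_1^2+m_2^2+2m_1m_2)/d_+=\ell m^2/d_+\) in (ii).

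Two points need care here, and I expect them to be the only real obstacle:
\begin{itemize}
\item Some coefficient may vanish, e.g.\ \(m_2=-2m_1\) in (ii). Then \(\gcd(m_1^2,0)=m_1^2\) and the formula still yields the full solution set \((\ell,0)\), which one should check directly.
\item In every channel \((A,B)\ne(0,0)\) must hold. In (i) and (ii) this is automatic since \(m_1\ne0\). In (iv), simultaneous vanishing would force \(m_2=-2m_1\) and \(m_1=-2m_2\), hence \(m_1=4m_1\), which is impossible.
\end{itemize}
Uniqueness of \(\ell\) follows because the generating vector is then nonzero.

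For the coefficients we use \eqref{eq:2.1.37} in the form
\[
 m_{12}=i\kappa_x\frac{n_1m_2-n_2m_1}{m_1},\qquad m_{21}=-i\kappa_x\frac{n_1m_2-n_2m_1}{m_2},
\]
and we evaluate \(n_1m_2-n_2m_1\) on each resonant line.
\begin{itemize}
\item In (i) it equals \(\ell m_1m_2m/d_{\mathrm s}\). Both indicators in \eqref{eq:2.1.38} are present, so the sum is \(i\kappa_x\ell m(m_2-m_1)/d_{\mathrm s}\), which is \eqref{eq:2.2.6}. The identification with \(-i\kappa_xn/(2\sqrt\gamma)\) follows from \eqref{eq:2.2.5}.
\item In (ii) only \(\mathbf 1_{\{\nu=\sigma\}}\) survives. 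Here \(n_1m_2-n_2m_1=-\ell m_1m_2m/d_+\), giving \eqref{eq:2.2.8}, and \(n=\ell m^2/d_+\) converts it to the second form.
\item Case (iii) is symmetric to (ii) with only \(m_{21}\) surviving.
\item In (iv) both indicators vanish identically, so \(\widetilde q=0\) for every admissible triple.
\end{itemize}

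The activity statements now follow. In (ii) and (iii), \(m_1m_2m\ne0\) makes the coefficient nonzero exactly when \(\ell\ne0\). In (i), nontriviality means \(\ell\ne0\), and \(m_2^2-m_1^2=0\) with \(m_1+m_2\ne0\) forces \(m_1=m_2\).
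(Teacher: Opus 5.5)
Your proposal is correct and follows essentially the same route as the paper: clear the denominators via the integer numerator of the phase, solve each homogeneous linear Diophantine relation by dividing out the gcd (with the sign of the primitive generator absorbed into \(\ell\), as the paper also does), and evaluate \(n_1m_2-n_2m_1\) on each resonant line to obtain the polarized coefficients. The differences are cosmetic. The paper uses the factorization \eqref{eq:3.13} in case (i) and the exchange symmetry \eqref{eq:3.12} to deduce case (iii) from case (ii), whereas you compute these directly.
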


\begin{remark}
The classification includes zero horizontal input and output frequencies,
and a resonance is nontrivial exactly when \(\ell\ne0\).
Exchanging \((k_1,\mu)\) and \((k_2,\nu)\) interchanges cases
\emph{(ii)} and \emph{(iii)} without changing \(\ell\), and preserves
cases \emph{(i)} and \emph{(iv)} with \(\ell\mapsto-\ell\).
Simultaneous reflection \(k_j\mapsto-k_j\), \(j=1,2\), and
\((\sigma,\mu,\nu)\mapsto(-\sigma,-\mu,-\nu)\) sends
\(\ell\) to \(-\ell\) in every case, consistently with
\eqref{eq:2.1.30}.
\end{remark}

To formulate the support classification, let
\(S=-S\subset K\) and \(s\in\mathbb R\), and define
\[
 \mathcal R_S^s
 :=\left\{U=(v,\theta)\in X^s:
 \operatorname{supp}\widehat v\cup
 \operatorname{supp}\widehat\theta\subset S\right\}.
\]
We call \(S\) nonlinearly transparent if
\begin{equation}
\label{eq:2.2.13}
        \mathcal N_2(U,U)=0,
        \qquad
        \mathcal N_3(U,U,U)=0
        \qquad
        \text{for every }U\in\mathcal R_S^\infty,
\end{equation}
where \(\mathcal R_S^\infty:=\bigcap_{r\ge0}\mathcal R_S^r\).
Since \(\mathcal R_S^\infty\) is linear and the two terms have different
homogeneities, this is equivalent to
\(\mathcal N_2(U,U)+\mathcal N_3(U,U,U)=0\) for all
\(U\in\mathcal R_S^\infty\).

A vector \(k_0=(n_0,m_0)\in K\) is primitive if
\(\gcd(|n_0|,|m_0|)=1\). For such a vector, set
\[
        \Gamma_{k_0}:=\{j k_0:j\in\mathbb Z_*\},
        \qquad
        \mathcal R_{k_0}^s:=\mathcal R_{\Gamma_{k_0}}^s.
\]
We call \(\Gamma_{k_0}\) a lattice ray, including both signs of the
generator to accommodate real-valued states. The next theorem
characterizes the transparent spaces defined solely by a common
Fourier-support constraint.

\begin{theorem}[Transparent Fourier supports]
\label{thm:2.2.3-maximal-transparent-rays}
Let \(\varnothing\ne S=-S\subset K\). The following statements are
equivalent.
\begin{enumerate}[(i)]
\item The set \(S\) is nonlinearly transparent in the sense of
      \eqref{eq:2.2.13}.
\item Every two frequencies in \(S\) are collinear:
      \[
             n_1m_2-n_2m_1=0
             \qquad
             \text{for all }k_j=(n_j,m_j)\in S.
      \]
\item \(S\subset\Gamma_{k_0}\) for some primitive \(k_0\in K\).
\end{enumerate}
Under these equivalent conditions, the stronger ordered identities
\begin{equation}
\label{eq:2.2.14}
 \mathcal N_2(U_1,U_2)=0,
 \qquad
 \mathcal N_3(U_1,U_2,U_3)=0
 \qquad
 \bigl(U_j\in\mathcal R_S^\infty\bigr)
\end{equation}
hold. For every \(s\in\mathbb R\), the nonzero nonlinearly transparent
subspaces of the form \(\mathcal R_S^s\) that are maximal under inclusion
are precisely the ray spaces \(\mathcal R_{k_0}^s\). Their primitive
generators are unique up to sign.
\end{theorem}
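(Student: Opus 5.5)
The plan is to prove the cycle (iii)$\Rightarrow$(ii)$\Rightarrow$(iii) elementarily, then (iii)$\Rightarrow$\eqref{eq:2.2.14}$\Rightarrow$(i), and finally (i)$\Rightarrow$(ii) by constructing a real state. The maximality statement is then a short consequence.

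\textbf{Lattice steps.} For (iii)$\Rightarrow$(ii), multiples of a single $k_0$ are pairwise collinear, so there is nothing to check. For (ii)$\Rightarrow$(iii), fix any $k_1=(n_1,m_1)\in S$ and write $k_1=jk_0$ with $j=\pm\gcd(|n_1|,|m_1|)$ and $k_0$ primitive. Since $m_1\ne0$, we have $k_0\in K$. Any $k=(n,m)\in S$ satisfies $nm_0=n_0m$. Primitivity then forces $k=j'k_0$ for an integer $j'$, and $j'\ne0$ because $m\ne0$.

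\textbf{Vanishing on rays.} Take trigonometric polynomials $U_j$ supported in $\Gamma_{k_0}$; density extends the identities to $\mathcal R_S^\infty$ by continuity of $\mathcal N_2,\mathcal N_3$ from $X^r$ into $H^{r-2}$ for large $r$.
\begin{itemize}
\item On $\Gamma_{k_0}$ the symbol $\lambda$ is the constant $\lambda_0=L_zn_0/(L_xm_0)$. Hence $\partial_z^{-1}\partial_x f=\lambda_0 f$.
\item The symbol \eqref{eq:2.1.37} has numerator $n_1m_2-n_2m_1=0$ for collinear frequencies, so $b(f,g)=0$ identically.
\item $\overline{fg}$ collects the frequencies $j_1k_0+j_2k_0$ with vertical component $0$, that is $j_1=-j_2$. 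These frequencies are zero, so $\overline{fg}$ is constant and $\partial_x\overline{fg}=0$.
\end{itemize}
These facts kill both terms of \eqref{eq:2.1.5}, which gives $\mathcal N_2(U_1,U_2)=0$. They also kill the prefactor in \eqref{eq:2.1.8}, which gives $\mathcal N_3=0$. This proves \eqref{eq:2.2.14}, and (i) follows as a special case.

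\textbf{(i)$\Rightarrow$(ii), the main obstacle.} Suppose $k_1,k_2\in S$ are noncollinear; then $-k_1,-k_2\in S$ too. I would take the real state
\[
v=\operatorname{Re}\bigl(a e^{i\xi_{k_1}\cdot x}+b e^{i\xi_{k_2}\cdot x}\bigr),\qquad \theta=0.
\]
Then $\mathcal N_2(U,U)=(\mathcal Pb(v,v),0)$, and I would compute its Fourier coefficient at an output $k$ chosen to avoid cancellation.

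The candidates are $k=k_1+k_2$ or $k=k_1-k_2$. One of them has nonzero vertical component, since $m_1+m_2$ and $m_1-m_2$ cannot both vanish. Suppose $m_1+m_2\ne0$. The output $k_1+k_2$ is not equal to $\pm2k_1$, $\pm2k_2$, $0$, or $\pm(k_1-k_2)$ by noncollinearity. The only contributions to this output are therefore the two ordered pairs $(k_1,k_2)$ and $(k_2,k_1)$. They give
\[
ab\,(m_{12}+m_{21})=ab\,i\kappa_x(n_1m_2-n_2m_1)\Bigl(\tfrac1{m_1}-\tfrac1{m_2}\Bigr).
\]

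The risk is $m_1=m_2$, where this sum vanishes. In that case $m_1+m_2=2m_1\ne0$ still, but the symmetric coefficient is zero. I would then pass to $k_1$ and $-k_2$, which are both in $S$, and use the output $k_1-k_2$. This output has vertical component $m_1-m_2=0$, so it is removed by $\mathcal P$. That route fails and this is the step to handle carefully.

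Instead, in the case $m_1=m_2$ I would use the pair $(k_1,-k_2)$ and the output $k_1+(-k_2)$ together with $k_2$, or better, use $\theta\ne0$. Take $v$ supported on $\pm k_1$ and $\theta$ supported on $\pm k_2$. Then $b(v,\theta)$ is not symmetrized, and its coefficient at $k_1+k_2$ is
\[
m_b(k_1,k_2)=i\kappa_x\,\frac{n_1m_2-n_2m_1}{m_1}\ne0.
\]
This output is uncancelled, since $\theta$ carries no $k_1$ mode and $v$ carries no $k_2$ mode. I would use this version uniformly, choosing $\pm k_2$ so that the vertical component of the output is nonzero. The result is $\mathcal N_2(U,U)\ne0$ with $U\in\mathcal R_S^\infty$, contradicting (i).

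\textbf{Maximality.} By (iii), any transparent $\mathcal R^s_S$ lies in some $\mathcal R^s_{k_0}$, and $\mathcal R^s_{k_0}$ is itself transparent. The ray spaces are therefore exactly the maximal ones. Distinct rays are not nested, because $\Gamma_{k_0}\cap\Gamma_{k_0'}=\varnothing$ unless $k_0'=\pm k_0$. This gives uniqueness of the generator up to sign.
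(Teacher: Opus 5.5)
Your proposal is correct and, once you settle on $v$ supported on $\pm k_1$ and $\theta$ on $\pm\varepsilon k_2$, reading off the uncancelled coefficient $m_b(k_1,\varepsilon k_2)$ of $\mathcal P b(v,\theta)$ at $k_1+\varepsilon k_2\in K$, it is essentially the paper's proof; the constant-symbol cancellation on rays, $\partial_x\overline{fg}=0$, and the gcd reduction also match. The $\theta=0$ detour is rightly discarded, since it genuinely fails when $m_1=m_2$, and for maximality you should state explicitly, as the paper does, that inclusion of the spaces $\mathcal R^s_S$ is equivalent to inclusion of the symmetric supports because every allowed frequency is carried by a real mode.
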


The final theorem identifies the large-time limit of the normalized
quadratic response after removing the linear evolution. For
\(H_1,H_2\in X^\infty\), set \(Z_{j,\mu}:=Z_\mu[H_j]\) and define the
resonant operator \(\mathcal Q_{\mathrm{res}}\) by
\begin{equation}
\label{eq:2.2.15}
 \widehat{Z_\sigma[\mathcal Q_{\mathrm{res}}(H_1,H_2)]}(k)
 =\sum_{\mu,\nu=\pm}
  \sum_{\substack{k_1+k_2=k\\\Phi_{\sigma\mu\nu}=0}}
  \widetilde q_{\sigma\mu\nu}
  \widehat Z_{1,\mu}(k_1)\widehat Z_{2,\nu}(k_2),
 \qquad k\in K.
\end{equation}
The inner sum runs over admissible triples. All wave coefficients with
zero vertical index are set to zero, and \eqref{eq:2.1.26} determines the
constrained output. As in \(\mathcal B_t\), the coefficient is polarized,
with no additional factor \(\frac12\).

\begin{theorem}[Horizontal means and resonant averaging]
\label{thm:2.2.4-mean-resonant-averaging}
Let \(\mathcal B_t\) and \(\mathcal Q_{\mathrm{res}}\) be defined by
\eqref{eq:2.2.1} and \eqref{eq:2.2.15}, respectively. Then the following
statements hold.
\begin{enumerate}[(i)]
\item Every admissible resonance with zero horizontal output is
      quadratically transparent:
      \begin{equation}
      \label{eq:2.2.16}
       n=0,\quad \Phi_{\sigma\mu\nu}=0
       \quad\Longrightarrow\quad \widetilde q_{\sigma\mu\nu}=0.
      \end{equation}
      For every \(s\ge2\) and \(H_1,H_2\in X^s\),
      \begin{equation}
      \label{eq:2.2.17}
       \sup_{t\in\mathbb R}
       \bigl\|\langle\mathcal B_t(H_1,H_2)\rangle_x\bigr\|_{X^{s-1}}
       \le C_s\|H_1\|_{X^s}\|H_2\|_{X^s}.
      \end{equation}
\item For every \(s\ge2\), \(\mathcal Q_{\mathrm{res}}\) extends
      uniquely to a bounded symmetric real-bilinear map from
      \(X^s\times X^s\) to \(X^{s-1}\). For all \(H_1,H_2\in X^s\),
      \begin{equation}
      \label{eq:2.2.18}
       \|\mathcal Q_{\mathrm{res}}(H_1,H_2)\|_{X^{s-1}}
       \le C_s\|H_1\|_{X^s}\|H_2\|_{X^s}.
      \end{equation}
      Its horizontal mean vanishes. For each fixed pair
      \(H_1,H_2\in X^s\),
      \begin{equation}
      \label{eq:2.2.19}
       \frac{1}{t} e^{-t\mathcal L}\mathcal B_t(H_1,H_2)
       \longrightarrow\mathcal Q_{\mathrm{res}}(H_1,H_2)
       \quad\text{in }X^{s-1}\quad\text{as }|t|\to\infty.
      \end{equation}
\item For every \(s\ge4\) and \(H_1,H_2\in X^s\),
      \begin{equation}
      \label{eq:2.2.20}
       \sup_{t\in\mathbb R}
       \bigl\|e^{-t\mathcal L}\mathcal B_t(H_1,H_2)
             -t\mathcal Q_{\mathrm{res}}(H_1,H_2)\bigr\|_{X^{s-4}}
       \le C_s\|H_1\|_{X^s}\|H_2\|_{X^s}.
      \end{equation}
      Consequently, for every \(t\ne0\),
      \begin{equation}
      \label{eq:2.2.21}
       \left\|\frac{1}{t} e^{-t\mathcal L}\mathcal B_t(H_1,H_2)
                   -\mathcal Q_{\mathrm{res}}(H_1,H_2)\right\|_{X^{s-4}}
       \le\frac{C_s}{|t|}\|H_1\|_{X^s}\|H_2\|_{X^s}.
      \end{equation}
\end{enumerate}
The constants depend only on \(s,L_x,L_z,g,\beta,\gamma\), not on time
or the inputs. The complex-bilinear extensions satisfy the same estimates
and convergence statements.
\end{theorem}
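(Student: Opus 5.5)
The whole argument runs in the wave variables. Write the interaction representation of \(\mathcal B_t\) from the Fourier--Duhamel formula:
\[
 \widehat{Z_\sigma[e^{-t\mathcal L}\mathcal B_t(H_1,H_2)]}(k)
 =\sum_{\mu,\nu}\sum_{k_1+k_2=k}
 \widetilde q_{\sigma\mu\nu}\,I_t(\Phi_{\sigma\mu\nu})\,
 \widehat Z_{1,\mu}(k_1)\widehat Z_{2,\nu}(k_2),
 \qquad I_t(\Phi):=\int_0^t e^{-i\tau\Phi}\,\mathrm d\tau .
\]
Here \(I_t(0)=t\) and \(|I_t(\Phi)|\le\min(|t|,2|\Phi|^{-1})\). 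All three parts reduce to estimating the multiplier \(\widetilde q\,(I_t-t\mathbf 1_{\Phi=0})\) and its pieces. The constraint \(m\ne0\) is enforced by admissibility, and zero vertical indices carry zero coefficients.

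\textbf{Part (i).} First I establish transparency at \(n=0\) channel by channel using Theorem~\ref{thm:2.2.2-exact-lattice-resonances}. In cases (i)--(iii) the coefficient is a multiple of \(n\), and in case (iv) it vanishes identically. Hence \(\widetilde q=0\) on resonances with \(n=0\).

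For the horizontal mean, only triples with \(n=0\) contribute, so \(n_2=-n_1\). The phase is then
\[
 \Phi=-N\frac{L_z}{L_x}n_1\Bigl(\frac{\mu}{m_1}-\frac{\nu}{m_2}\Bigr),
\]
and \(\omega_\sigma(k)=0\). If \(\Phi=0\), then \(\widetilde q=0\) by the transparency just proved. If \(\Phi\ne0\), I divide \(\widetilde q\) directly by \(\Phi\). Since \(m_b\) has size at most about \(|n_1||m|/|m_1|\) (and similarly for \(m_{21}\)), the quotient \(\widetilde q/\Phi\) should be bounded by a power of the vertical frequencies of size at most one derivative. For instance, with \(\mu=\nu\) one finds
\[
 \frac{m_{12}}{\Phi}\sim \frac{m\,m_2}{m_2-m_1}.
\]
Combining this with \(|I_t|\le2|\Phi|^{-1}\) gives a time-uniform multiplier bounded by \(\langle\xi_{k_1}\rangle+\langle\xi_{k_2}\rangle\) (after checking the denominators are integers \(\ge1\)). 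The endpoint product estimates of Section~\ref{sec:3-quadratic-wave-interactions} for \(s\ge2\) (mixed norms) then give \eqref{eq:2.2.17}.

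\textbf{Part (ii).} The resonant operator satisfies the same bound as \(t^{-1}\mathcal B_t\) restricted to the resonant set. Indeed \(|\widetilde q|\lesssim|m_b|\) is the symbol of \(b\), so I would dominate \(\mathcal Q_{\mathrm{res}}\) coefficientwise by the operator with absolute-value symbols. The same \(s\ge2\) endpoint estimate used for Theorem~\ref{thm:2.2.1-second-picard-obstruction}(ii) controls this majorant, since that estimate only used \(|I_t|\le|t|\) and positivity of the majorant. Symmetry and reality follow from the polarized form and \eqref{eq:2.1.30}. The vanishing horizontal mean follows from part (i).

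For convergence, note that \(t^{-1}I_t(\Phi)\to\mathbf 1_{\Phi=0}\) pointwise and is bounded by \(1\). For trigonometric polynomial inputs the sum is finite, so convergence is immediate. For general inputs I use density together with the uniform bound \(\|t^{-1}e^{-t\mathcal L}\mathcal B_t\|_{X^s\times X^s\to X^{s-1}}\le C_s\) from Theorem~\ref{thm:2.2.1-second-picard-obstruction} and \eqref{eq:2.2.18}, via an \(\varepsilon/3\) argument.

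\textbf{Part (iii).} The error multiplier is \(\widetilde q\,I_t\,\mathbf 1_{\Phi\ne0}\), bounded by \(2|\widetilde q|/|\Phi|\). The key step, and the main obstacle, is a lower bound for nonzero phases. Write
\[
 \Phi=N\frac{L_z}{L_x}\,\frac{P}{m\,m_1m_2},
\]
with \(P\) an integer. Then \(|\Phi|\gtrsim 1/|m m_1m_2|\). Hence the error multiplier is at most
\[
 |m_b|\,|m m_1 m_2|\lesssim\langle\xi_{k_1}\rangle\langle\xi_{k_2}\rangle\cdot\langle\xi\rangle^{3}
\]
distributed over inputs and output. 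I would try to allocate three extra derivatives so that the total loss is four. The high output frequency is absorbed into the \(X^{s-4}\) weight, and each \(|m_j|\) is placed on the corresponding input. Via \(\langle\xi\rangle\lesssim\langle\xi_{k_1}\rangle+\langle\xi_{k_2}\rangle\), this reduces to a product estimate of the form \(H^{s-a}\cdot H^{s-b}\to H^{s-4}\) with \(s\ge4\), which follows from the algebra/Young-type bounds of Section~\ref{sec:3-quadratic-wave-interactions}. The delicate point is the bookkeeping of which factor absorbs which derivative in high--low versus low--high configurations; I would case-split on \(|k_1|\ge|k_2|\). Finally, \eqref{eq:2.2.21} follows by dividing \eqref{eq:2.2.20} by \(|t|\).
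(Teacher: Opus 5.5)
Your overall architecture matches the paper's: the Fourier--Duhamel representation, transparency at \(n=0\) read off from Theorem~\ref{thm:2.2.2-exact-lattice-resonances}, direct phase division at zero horizontal output, finite-support convergence plus an \(\varepsilon/3\) density argument, and the integer lower bound for \(|\Phi|\). But the crux of part (i) is not established.

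\textbf{Part (i).} The time-uniform one-derivative bound \eqref{eq:2.2.17} depends on a cancellation inside the polarized same-branch coefficient, and your computation misses it. Treating \(m_{12}/\Phi\) by itself gives
\(|m_{12}/\Phi_{\sigma\sigma\sigma}|=\frac{\kappa_z}{N}\frac{|m\,m_2|}{|m_2-m_1|}\).
Integrality of \(m_2-m_1\) only bounds this by \(|m||m_2|\), which is two derivatives. For example, take \(k_1=(n_1,M)\) and \(k_2=(-n_1,M+1)\) with \(n_1\ne0\) fixed: the ratio is of size \(M^2\), while \(\langle\xi_{k_j}\rangle\simeq M\). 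Fed into Lemma~\ref{lem:3.3-fourier-summation}, this gives a time-uniform bound only in \(X^{s-2}\).

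What you need is to divide the full coefficient \(\widetilde q_{\sigma\sigma\sigma}=(m_{12}+m_{21})/(2\sqrt\gamma)\). By \eqref{eq:3.13}, at \(n_2=-n_1\) it equals \(\frac{i\kappa_x n_1 m}{2\sqrt\gamma}\frac{m_2-m_1}{m_1m_2}\). The same factor \(n_1(m_2-m_1)/(m_1m_2)\) appears in \(\Phi_{\sigma\sigma\sigma}\), so \(\widetilde q/(i\Phi)=-\kappa_z m/(2\sigma N\sqrt\gamma)\), as in \eqref{eq:4.3.2}. In the mixed channels the individual quotients are already proportional to \(m_2\) or \(m_1\). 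Once you have the multiplier bound \(C(|m_1|+|m_2|)\), the right tool is Lemma~\ref{lem:3.3-fourier-summation} with \(d=1\). The endpoint product estimates of Proposition~\ref{prop:3.1-quadratic-endpoint-bounds} exploit the physical-space structure of \(b\) and say nothing about the multiplier \((1-e^{-it\Phi})\widetilde q/(i\Phi)\).

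\textbf{Part (ii).} Your bound \eqref{eq:2.2.18} rests on the claim that the estimate behind Theorem~\ref{thm:2.2.1-second-picard-obstruction}(ii) ``only used \(|I_t|\le|t|\) and positivity of the majorant.'' It did not. That estimate is Minkowski in time plus Proposition~\ref{prop:3.1-quadratic-endpoint-bounds}, a mixed-norm bound for the signed operator \(\mathcal P b\). It does not automatically control the operator with symbol \(|m_{12}|+|m_{21}|\) acting on \(|\widehat Z_{j,\mu}|\). Since \(|m_{12}|\) can be of size \(|n_1||m_2|\) (take \(m_1=1\), \(n_2=0\)), this majorant is not covered by Lemma~\ref{lem:3.3-fourier-summation} with \(d=1\). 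At \(s=2\) you would have to redo the mixed-norm argument for nonnegative Fourier coefficients.

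The paper avoids this entirely. For polynomial inputs, \(\mathcal Q_{\mathrm{res}}(H_1,H_2)\) is the coefficientwise limit of \(t^{-1}e^{-t\mathcal L}\mathcal B_t(H_1,H_2)\). The \(X^{s-1}\) norm of the latter is bounded uniformly in \(t\) by \eqref{eq:2.2.3} and unitarity, which is the very bound you already invoke in your density step. Alternatively, on the resonant set Theorem~\ref{thm:2.2.2-exact-lattice-resonances} gives \(|\widetilde q|\lesssim|n|+|n_1|+|n_2|\), so Lemma~\ref{lem:3.3-fourier-summation} with \(d=1\) applies directly.

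\textbf{Part (iii).} Your product bound \(\langle\xi_{k_1}\rangle\langle\xi_{k_2}\rangle\langle\xi_k\rangle^3\) fails for high--high-to-low interactions, where \(|m_2|\gg\langle\xi_k\rangle\), and you dropped the \(m_{21}\) term. No case split is needed: Lemma~\ref{lem:3.2-phase-division} gives the sum-form bound \(C(\langle\xi_{k_1}\rangle+\langle\xi_{k_2}\rangle)^4\), and Lemma~\ref{lem:3.3-fourier-summation} with \(d=4\) closes the argument for \(s\ge4\).
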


\begin{remark}
Convergence in \eqref{eq:2.2.19} is asserted for each fixed input pair,
whereas \eqref{eq:2.2.21} gives convergence in bilinear operator norm
into \(X^{s-4}\) for \(s\ge4\). The four-derivative loss is not claimed
to be optimal. These statements concern responses to free linear
trajectories, not convergence or resonant approximation of nonlinear
solutions.
\end{remark}



\section{Wave multipliers and multilinear estimates}
\label{sec:3-quadratic-wave-interactions}

We derive the wave multipliers and the Fourier representation of
\(\mathcal B_t\). We then establish multilinear Sobolev estimates,
including the endpoint \(s=2\), and the phase-division bounds used
in resonant averaging.

\subsection{Wave interactions}
\label{subsec:3.1-wave-multipliers}

Following the conventions of
Section~\ref{sec:2-preliminaries-main-results}, we write
\(k_j=(n_j,m_j)\in K\), \(j=1,2\), and \(k=k_1+k_2=(n,m)\)
in convolution sums. The horizontal indices may vanish, and the
vertical input indices may have either sign. Since \(\mathcal P\)
removes quadratic outputs with \(m=0\), output branches and phases
are considered only for \(m\ne0\). Frequency arguments are omitted
when clear from the context.

For \(U_j=(v_j,\theta_j)\in X^\infty_{\mathbb C}\), write
\(Z_{j,\sigma}=Z_\sigma[U_j]\) as in \eqref{eq:2.1.25}.
By the bilinearity of \(b\), the definition \eqref{eq:2.1.7}
and the inverse transformation \eqref{eq:2.1.26} imply
\begin{align}
 Z_\sigma[\mathcal N_2(U_1,U_2)]
 ={}&\mathcal P\Bigl[
      \sqrt\gamma\,b(v_1,v_2)
      +i\sigma\sqrt{g\beta}\,b(v_1,\theta_2)
      \Bigr] \notag\\
 ={}&\mathcal P b\bigl(v_1,Z_{2,\sigma}\bigr)
 =\frac{1}{2\sqrt\gamma}
   \sum_{\mu=\pm}
   \mathcal P b\bigl(Z_{1,\mu},Z_{2,\sigma}\bigr).
\label{eq:3.1}
\end{align}
The cubic term in \eqref{eq:2.1.8} takes the form
\begin{equation}
 Z_\sigma[\mathcal N_3(U_1,U_2,U_3)]
 =\frac{1}{\gamma}
   \partial_x\overline{v_1\theta_2}\,
   \partial_zZ_{3,\sigma}.
\end{equation}
Combining these identities with \eqref{eq:2.1.28}, we write
\eqref{eq:2.1.11} for a smooth solution \(U\) as
\begin{equation}
\label{eq:3.3}
 \partial_tZ_\sigma
 =i\sigma N\partial_z^{-1}\partial_xZ_\sigma
 +\frac{1}{2\sqrt\gamma}
  \sum_{\mu=\pm}\mathcal P b(Z_\mu,Z_\sigma)
 +\frac{1}{\gamma}
   \partial_x\overline{v\theta}\,\partial_zZ_\sigma.
\end{equation}
To remove the linear evolution, define the profile
\begin{equation}
\label{eq:3.4}
 F_\sigma(t):=e^{-it\sigma N\partial_z^{-1}\partial_x}Z_\sigma(t).
\end{equation}
For real-valued \(U\), the identities
\(\omega_\sigma(-k)=\omega_\sigma(k)=-\omega_{-\sigma}(k)\)
and \eqref{eq:2.1.30} imply
\begin{equation}
\label{eq:3.5}
 \widehat F_\sigma(t,-k)
 =e^{-it\omega_\sigma(k)}
   \overline{\widehat Z_{-\sigma}(t,k)}
 =\overline{\widehat F_{-\sigma}(t,k)}.
\end{equation}

We next express the quadratic term in Fourier variables.
For smooth \(f,g\) with zero vertical mean, the symbol of \(b\) is
\(\lambda(k_1)i\kappa_zm_2-i\kappa_xn_2=m_{12}\), as in
\eqref{eq:2.1.37}. If \(m_1+m_2=0\), this equals
\(-i\kappa_x(n_1+n_2)\) and cancels the coefficient
\(i\kappa_x(n_1+n_2)\) of
\(\partial_x\overline{fg}\) in \eqref{eq:2.1.5}. Thus
\begin{equation}
\label{eq:3.6}
 \widehat{\mathcal P b(f,g)}(k)
 =
 \begin{cases}
 \displaystyle
 \sum_{k_1+k_2=k}
 m_{12}\widehat f(k_1)\widehat g(k_2),
 &m\ne0,\\[2mm]
 0,&m=0.
 \end{cases}
\end{equation}
For \(k\in K\), define the ordered branch multipliers
\(q_{\sigma\mu\nu}\) by
\begin{equation}
\label{eq:3.7}
 \widehat{Z_\sigma[\mathcal N_2(U_1,U_2)]}(k)
 =\sum_{\mu,\nu=\pm}
  \sum_{k_1+k_2=k}
  q_{\sigma\mu\nu}(k;k_1,k_2)
  \widehat Z_{1,\mu}(k_1)\widehat Z_{2,\nu}(k_2).
\end{equation}
The following lemma identifies the ordered and polarized coefficients.

\begin{lemma}
\label{lem:3.1-quadratic-branch-multipliers}
Let \(k_1,k_2\in K\) with \(k=k_1+k_2\in K\), and let
\(\sigma,\mu,\nu\in\{\pm1\}\). Then
\begin{equation}
\label{eq:3.8}
 q_{\sigma\mu\nu}(k;k_1,k_2)
 =\frac{\mathbf 1_{\{\nu=\sigma\}}}{2\sqrt\gamma}\,m_{12}.
\end{equation}
The polarized coefficient \(\widetilde q_{\sigma\mu\nu}\) is given
by \eqref{eq:2.1.38}.
\end{lemma}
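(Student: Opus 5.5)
The plan is to read off the ordered multiplier directly from the chain of identities \eqref{eq:3.1}, and then to polarize. First I justify each equality in \eqref{eq:3.1}. The first follows from the definition \eqref{eq:2.1.7} and linearity of \(\mathcal P\). For the second, \(b(f,\cdot)\) is linear in its second argument, so
\[
\sqrt\gamma\,b(v_1,v_2)+i\sigma\sqrt{g\beta}\,b(v_1,\theta_2)=b(v_1,Z_{2,\sigma}).
\]
For the third, \(v_1=(Z_{1,+}+Z_{1,-})/(2\sqrt\gamma)\) by \eqref{eq:2.1.26}, and \(b\) is linear in its first argument. One point needs care: \(\partial_z^{-1}\) requires zero vertical mean. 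Since \(Z_{1,\mu}\) inherits \(\bar v_1=\bar\theta_1=0\), this is harmless.

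Next I apply the Fourier formula \eqref{eq:3.6} to \(\mathcal P b(Z_{1,\mu},Z_{2,\sigma})\) at an output \(k\in K\). This gives
\[
\widehat{Z_\sigma[\mathcal N_2(U_1,U_2)]}(k)=\frac{1}{2\sqrt\gamma}\sum_{\mu=\pm}\sum_{k_1+k_2=k} m_{12}\,\widehat Z_{1,\mu}(k_1)\widehat Z_{2,\sigma}(k_2).
\]
The second input branch is forced to equal \(\sigma\). Rewriting the sum as a sum over \(\nu\) with the factor \(\mathbf 1_{\{\nu=\sigma\}}\) matches \eqref{eq:3.7} and yields \eqref{eq:3.8}. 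The coefficients in \eqref{eq:3.7} are not a priori unique as functions, so I would state that \eqref{eq:3.8} is the canonical choice read off from this expansion.

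Uniqueness is justified as follows. For fixed admissible \((k_1,k_2)\) and branches \((\mu,\nu)\), take \(U_1,U_2\) to be single-mode pure-branch states. Such states exist because \(U\mapsto(Z_+,Z_-)\) is a bijection on \(X^\infty_{\mathbb C}\). With these states, each coefficient is isolated.

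For the polarized coefficient, I use \eqref{eq:2.1.12}. The wave expansion of \(\mathcal N_2(H_1,H_2)+\mathcal N_2(H_2,H_1)\) is the sum of two terms. The first is \eqref{eq:3.7} with coefficient \(q_{\sigma\mu\nu}(k;k_1,k_2)\). The second, after relabeling the summation variables \(k_1\leftrightarrow k_2\) and \(\mu\leftrightarrow\nu\), contributes \(q_{\sigma\nu\mu}(k;k_2,k_1)\widehat Z_{1,\mu}(k_1)\widehat Z_{2,\nu}(k_2)\). By \eqref{eq:3.8} this equals \(\mathbf 1_{\{\mu=\sigma\}}m_{21}/(2\sqrt\gamma)\), and adding the two gives \eqref{eq:2.1.38}, with no extra factor \(\tfrac12\).

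The main obstacle is bookkeeping rather than analysis. I must track that \(m_b\) is evaluated with the first argument as the advecting field, so that the swapped term produces \(m_{21}=m_b(k_2,k_1)\) and not \(m_{12}\). I must also confirm that the averaged correction \(\partial_x\overline{fg}\) drops out exactly at \(m=0\), as in \eqref{eq:3.6}, so that no output with \(m\ne0\) receives an additional contribution.
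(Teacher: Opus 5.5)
Your proposal is correct and follows essentially the same route as the paper: you read off \eqref{eq:3.8} by combining \eqref{eq:3.1} with \eqref{eq:3.6}, and you obtain \eqref{eq:2.1.38} by relabeling \((k_1,\mu)\leftrightarrow(k_2,\nu)\) in the reversed term, which is exactly \eqref{eq:3.9}. You also test against complex single-mode, pure-branch inputs to isolate each coefficient; this is a valid and useful justification of the uniqueness that the paper leaves implicit in ``comparing the result with \eqref{eq:3.7}''.
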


\begin{proof}
Combining \eqref{eq:3.1} with \eqref{eq:3.6} and comparing the
result with \eqref{eq:3.7} establishes \eqref{eq:3.8}.
For \(H_1,H_2\in X^\infty_{\mathbb C}\), write
\(Z_{j,\rho}=Z_\rho[H_j]\). In the reversed term
\(\mathcal N_2(H_2,H_1)\), exchanging \((k_1,\mu)\) and
\((k_2,\nu)\) identifies the coefficient of
\(\widehat Z_{1,\mu}(k_1)\widehat Z_{2,\nu}(k_2)\) as
\(q_{\sigma\nu\mu}(k;k_2,k_1)\). Hence
\begin{equation}
\label{eq:3.9}
 \widetilde q_{\sigma\mu\nu}(k;k_1,k_2)
 =q_{\sigma\mu\nu}(k;k_1,k_2)
  +q_{\sigma\nu\mu}(k;k_2,k_1).
\end{equation}
Substituting \eqref{eq:3.8} establishes \eqref{eq:2.1.38}.
The polarized quadratic term therefore has the representation
\begin{equation}
\label{eq:3.10}
 \widehat{Z_\sigma[\mathcal N_2(H_1,H_2)+\mathcal N_2(H_2,H_1)]}(k)
 =\sum_{\mu,\nu=\pm}\sum_{k_1+k_2=k}
 \widetilde q_{\sigma\mu\nu}
 \widehat Z_{1,\mu}(k_1)\widehat Z_{2,\nu}(k_2).
\end{equation}
\end{proof}

Clearing the vertical denominators in \eqref{eq:2.1.36} gives,
for an admissible triple,
\begin{equation}
\label{eq:3.11}
 \frac{L_x}{L_z}\,m_1m_2m
 \frac{\Phi_{\sigma\mu\nu}(k;k_1,k_2)}{\sigma N}
 =(n_1+n_2)m_1m_2
 -\frac{\mu}{\sigma}n_1m_2m
 -\frac{\nu}{\sigma}n_2m_1m.
\end{equation}
Since \(m_1m_2m\ne0\), resonance is equivalent to the vanishing of
the right-hand side. For fixed vertical indices and branch signs,
this is a linear Diophantine equation in \((n_1,n_2)\).
The phase and polarized coefficient are invariant under exchange
of the input pairs:
\[
 \Phi_{\sigma\mu\nu}(k;k_1,k_2)
 =\Phi_{\sigma\nu\mu}(k;k_2,k_1),
\]
and
\begin{equation}
\label{eq:3.12}
 \widetilde q_{\sigma\mu\nu}(k;k_1,k_2)
 =\widetilde q_{\sigma\nu\mu}(k;k_2,k_1).
\end{equation}
For the same-branch interaction, substitution of
\eqref{eq:2.1.37} into \eqref{eq:2.1.38} gives the factorization
\begin{equation}
 \widetilde q_{\sigma\sigma\sigma}(k;k_1,k_2)
 =\frac{m_{12}+m_{21}}{2\sqrt\gamma}
 =\frac{i\kappa_x}{2\sqrt\gamma}
 (n_1m_2-n_2m_1)
 \left(\frac{1}{m_1}-\frac{1}{m_2}\right).
\label{eq:3.13}
\end{equation}

We now write \eqref{eq:3.3} in terms of the profiles
\eqref{eq:3.4}. Symmetrization by \eqref{eq:3.9} yields, for
\(k\in K\),
\begin{equation}
 \partial_t\widehat F_\sigma(t,k)
 =\frac{1}{2}\sum_{\mu,\nu=\pm}\sum_{k_1+k_2=k}
 e^{-it\Phi_{\sigma\mu\nu}}\widetilde q_{\sigma\mu\nu}
 \widehat F_\mu(t,k_1)\widehat F_\nu(t,k_2)
 +\widehat{\mathcal R}_\sigma(t,k),
\end{equation}
where the cubic term is
\(\mathcal R_\sigma(t):=
 e^{-it\sigma N\partial_z^{-1}\partial_x}
 Z_\sigma[\mathcal N_3(U,U,U)(t)]\).
The factor \(\frac12\) averages the two input orders in the
self-interaction. The polarized second differential
\eqref{eq:2.1.12} contains their sum, without this factor.

For \(H_1,H_2\in X^\infty\), set \(Z_{j,\mu}=Z_\mu[H_j]\).
Substituting \eqref{eq:3.10} into the definition
\eqref{eq:2.2.1} and using
Proposition~\ref{prop:2.1.1-unitary-linear-evolution} with
\eqref{eq:2.1.28}, we obtain the Fourier--Duhamel representation
\begin{equation}
\label{eq:3.15}
\begin{aligned}
 \widehat{Z_\sigma[\mathcal B_t(H_1,H_2)]}(k)
 ={}&e^{it\omega_\sigma(k)}
 \sum_{\mu,\nu=\pm}\sum_{k_1+k_2=k}
 \widetilde q_{\sigma\mu\nu}\widehat Z_{1,\mu}(k_1)\widehat Z_{2,\nu}(k_2)\\
 &\quad\times\int_0^t e^{-i\tau\Phi_{\sigma\mu\nu}}\,\mathrm d\tau
\end{aligned}
\end{equation}
for \(t\in\mathbb R\) and \(k\in K\).
Rapid Fourier decay and polynomial growth of the multiplier justify
the interchange of summation and integration on compact time intervals.
For real \(\Phi\),
\[
 \int_0^t e^{-i\tau\Phi}\,\mathrm d\tau
 =
 \begin{cases}
 t,&\Phi=0,\\[1mm]
 \displaystyle\frac{1-e^{-it\Phi}}{i\Phi},&\Phi\ne0.
 \end{cases}
\]
The identities \(m_b(-k_1,-k_2)=\overline{m_b(k_1,k_2)}\) and
\(\omega_{-\sigma}(-k)=-\omega_\sigma(k)\) imply
\[
 \Phi_{-\sigma,-\mu,-\nu}(-k;-k_1,-k_2)
 =-\Phi_{\sigma\mu\nu}(k;k_1,k_2),
\]
and
\begin{equation}
\label{eq:3.16}
 \widetilde q_{-\sigma,-\mu,-\nu}(-k;-k_1,-k_2)
 =\overline{\widetilde q_{\sigma\mu\nu}(k;k_1,k_2)}.
\end{equation}
For real-valued \(U\), the cubic term satisfies
\(\widehat{\mathcal R}_\sigma(t,-k)
=\overline{\widehat{\mathcal R}_{-\sigma}(t,k)}\).
Together, these relations show that the quadratic and cubic profile
terms preserve \eqref{eq:3.5}.

\subsection{Multilinear estimates}
\label{subsec:3.2-multilinear-endpoint}
\label{subsec:3.3-phase-division}

We first estimate the quadratic and cubic terms, using mixed norms
at \(s=2\). The remaining lemmas control division by nonzero phases
and the resulting bilinear Fourier sums.

\begin{proposition}
\label{prop:3.1-quadratic-endpoint-bounds}
Let \(s\ge2\). The operators \(\mathcal N_2\) and \(\mathcal N_3\)
admit unique bounded bilinear and trilinear extensions
\[
 \mathcal N_2:X^s\times X^s\longrightarrow X^{s-1}
 \quad\text{and}\quad
 \mathcal N_3:(X^s)^3\longrightarrow X^{s-1},
\]
respectively. For all \(U_1,U_2,U_3\in X^s\),
\begin{equation}
\label{eq:3.17}
 \|\mathcal N_2(U_1,U_2)\|_{X^{s-1}}
 \le C_s\|U_1\|_{X^s}\|U_2\|_{X^s},
\end{equation}
and
\begin{equation}
\label{eq:3.18}
 \|\mathcal N_3(U_1,U_2,U_3)\|_{X^{s-1}}
 \le C_s\|U_1\|_{X^s}\|U_2\|_{X^s}\|U_3\|_{X^s}.
\end{equation}
The same conclusions hold on the corresponding complexified spaces.
\end{proposition}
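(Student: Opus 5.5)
The plan is to establish \eqref{eq:3.17}--\eqref{eq:3.18} for constrained real trigonometric polynomials and then extend by density, which was noted in Section~\ref{sec:2-preliminaries-main-results}. Uniqueness of the extension is automatic from density. The complexified statement follows by complex multilinearity. Since \(\mathcal P\) and the averages are bounded Fourier projections, it suffices to bound the unprojected pieces of \eqref{eq:2.1.5} in \(H^{s-1}\). These pieces are \((\partial_z^{-1}\partial_xf)\partial_zg\), \(f\partial_xg\), and \(\partial_x\overline{fg}\), for \(f,g\in H^s\) with zero vertical mean. The cubic term needs the same kind of bound for \(\partial_x\overline{v_1\theta_2}\,\partial_zZ\).

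For \(s>2\), I would use the algebra and tame product estimate in \(H^{s-1}(\Omega)\), available because \(s-1>1=d/2\). In the form \(\|FG\|_{H^{s-1}}\lesssim\|F\|_{H^{s-1}}\|G\|_{H^{s-1}}\), it gives everything at once. Indeed, \(\partial_z^{-1}\partial_xf\in H^{s-1}\) by \eqref{eq:2.1.17}, and \(\partial_zg,\partial_xg\in H^{s-1}\). For the averaged term, \(\|\partial_x\overline{fg}\|_{H^{s-1}}\le\|fg\|_{H^s}\lesssim\|f\|_{H^s}\|g\|_{H^s}\), since vertical averaging is a Fourier projection and \(H^s\) is an algebra. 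The cubic term is bounded by \(\|\partial_x\overline{v_1\theta_2}\|_{H^{s-1}}\|\partial_zZ\|_{H^{s-1}}\).

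The main obstacle is the endpoint \(s=2\), where \(H^1(\Omega)\) is not an algebra. Take \(h:=\partial_z^{-1}\partial_xf\in H^1\) and \(\partial_zg\in H^1\). The target is \(\|h\,\partial_zg\|_{H^1}\). Its derivative terms \(\nabla h\,\partial_zg\) and \(h\,\nabla\partial_zg\) are products of an \(L^2\) function with an \(H^1\) function, so \(L^4\) embedding does not suffice. I would exploit the vertical structure. Because \(h\) is a vertical primitive, \(\partial_zh=\partial_xf\in H^1\), so \(h\) has more regularity in \(z\) than a generic \(H^1\) function. I would work in mixed norms \(L^\infty_xL^2_z\), \(L^2_xL^\infty_z\), and \(L^\infty_zL^2_x\), using one-dimensional embeddings \(H^{1/2+}\subset L^\infty\) in a single variable. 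For instance, \(\|h\|_{L^\infty_zL^2_x}\lesssim\|\partial_xf\|_{L^2}\le\|f\|_{H^1}\), since \(\partial_zh\) is controlled. Also \(\partial_xh=\partial_z^{-1}\partial_x^2f\) satisfies \(\|\partial_xh\|_{L^\infty_zL^2_x}\lesssim\|\partial_x^2 f\|_{L^2}\), and \(\partial_zg\in H^1\) gives \(\partial_zg\in L^2_zL^\infty_x\) up to an endpoint issue. I will take care of that endpoint using \(g\in H^2\), so that \(\partial_zg\in H^1\) and \(\partial_x\partial_zg\in L^2\) yield \(L^2_zH^{1}_x\subset L^2_zL^\infty_x\). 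Pairing norms this way, \(\|\partial_xh\,\partial_zg\|_{L^2}\le\|\partial_xh\|_{L^\infty_zL^2_x}\|\partial_zg\|_{L^2_zL^\infty_x}\). The term \(h\,\partial_x\partial_zg\) is handled with \(\|h\|_{L^\infty}\). This holds because \(h\in L^\infty_z H^1_x\), using \(\partial_xh,h\in L^\infty_zL^2_x\), which gives \(h\in L^\infty\). Any remaining pairing goes through Fourier-side Cauchy--Schwarz with the weight \(\langle\kappa_xn\rangle^{-1}\) summable in the appropriate variable.

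For \(f\partial_xg\), the function \(f\in H^2\subset L^\infty\), so \(\|f\nabla\partial_xg\|_{L^2}\) is fine. The term \(\nabla f\,\partial_xg\) is a product of two \(H^1\) functions, bounded in \(L^2\) via \(L^4\). The averaged term is one-dimensional, where \(\overline{fg}\in H^2(\mathbb T_x)\) is controlled by the algebra property. The cubic term at \(s=2\) is easy because its prefactor depends only on \(x\). By \(\|\overline{v_1\theta_2}\|_{H^2(\mathbb T_x)}\lesssim\|v_1\|_{H^2}\|\theta_2\|_{H^2}\), the prefactor \(\partial_x\overline{v_1\theta_2}\) lies in \(H^1(\mathbb T_x)\subset L^\infty\). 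Its \(x\)-derivative lies in \(L^2_x\), and it multiplies \(\nabla\partial_zZ\in L^2\) and \(\partial_zZ\in L^2_xL^\infty_z\), using \(Z\in H^2\). All constants depend only on the fixed parameters and \(s\). This gives \eqref{eq:3.17}--\eqref{eq:3.18} for trigonometric polynomials, hence the proposition.
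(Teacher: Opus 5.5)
Your plan follows the paper's proof. For \(s>2\) you use the algebra property of \(H^{s-1}\). At \(s=2\) you use that \(h=\partial_z^{-1}\partial_xf\) has \(\partial_zh=\partial_xf\in H^1\), which gives \(\|h\|_{L^\infty}\lesssim\|f\|_{H^2}\) and a mixed-norm bound for \(\partial_xh\,\partial_zg\), with \(H^1\subset L^4\) for the remaining products. Your pairing for \(\partial_xh\,\partial_zg\) is valid. Bounding \(\partial_x\overline{fg}\) separately is harmless; the paper simply bounds \(b(f,g)\) in \(H^1\) and applies the bounded projection \(\mathcal P\).

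One step fails as written. In the cubic term at \(s=2\), with \(A:=\frac1\gamma\partial_x\overline{v_1\theta_2}\), you pair \(\partial_xA\in L^2_x\) with \(\partial_zZ\in L^2_xL^\infty_z\). Since \(A\) depends only on \(x\),
\[
 \|(\partial_xA)\,\partial_zZ\|_{L^2}^2
 =\int_0^{L_x}|\partial_xA(x)|^2\,\|\partial_zZ(x,\cdot)\|_{L^2_z}^2\,\mathrm dx .
\]
With your choice, both factors are only square integrable in \(x\). No H\"older inequality \(L^2_x\times L^2_xL^\infty_z\to L^2\) holds: consider \(\phi(x)\) and \(\phi(x)\cos(\kappa_zz)\) with \(\phi\in L^2\setminus L^4\). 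What you need is \(\sup_x\|\partial_zZ(x,\cdot)\|_{L^2_z}\), that is, \(\partial_zZ\in L^\infty_xL^2_z\). This follows from \(\partial_zZ,\partial_x\partial_zZ\in L^2\) and the one-dimensional embedding \(H^1_x\subset L^\infty_x\), applied to the \(L^2_z\)-valued map \(x\mapsto\partial_zZ(x,\cdot)\). This is exactly the paper's bound \(\|\partial_xA\|_{L^2_x}\|\partial_zg\|_{L^\infty_xL^2_z}\). It is the same kind of one-dimensional argument in \(x\) that you already used for \(\partial_zg\).

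There are two smaller points. First, the target \(X^{s-1}\) requires zero vertical mean, so you should note that the cubic output has it: the prefactor is independent of \(z\), and \(\overline{\partial_zv_3}=\overline{\partial_z\theta_3}=0\). Second, the ``remaining pairing'' in \(\nabla(h\,\partial_zg)\) is just \(\partial_zh\,\partial_zg=\partial_xf\,\partial_zg\). This is a product of two \(H^1\) functions, handled by \(L^4\times L^4\), so no Fourier-side argument is needed. Note also that \(\langle\kappa_xn\rangle^{-1}\) is only square-summable, not summable. With these corrections, your argument is complete.
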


\begin{proof}
We first take smooth inputs. Suppose that \(s>2\) and set \(r:=s-1>1\).
For \(f,g\) with zero vertical mean, the algebra property of
\(H^r(\Omega)\) and \eqref{eq:2.1.17} imply
\begin{align*}
 \|b(f,g)\|_{H^r}
 \le{}&C_r\Bigl(
 \|\partial_z^{-1}\partial_xf\|_{H^r}\|\partial_zg\|_{H^r}
 +\|f\|_{H^r}\|\partial_xg\|_{H^r}\Bigr)\\
 \le{}&C_s\|f\|_{H^s}\|g\|_{H^s}.
\end{align*}
Applying this estimate componentwise in \eqref{eq:2.1.7} and using
the boundedness of \(\mathcal P\) on \(H^r\) proves \eqref{eq:3.17}.

For the cubic term, regard \(\overline{v_1\theta_2}\) as a function
on \(\Omega\) independent of \(z\). The boundedness of vertical
averaging and the algebra property of \(H^s(\Omega)\) imply
\[
 \|\partial_x\overline{v_1\theta_2}\|_{H^r}
 \le C_s\|v_1\theta_2\|_{H^s}
 \le C_s\|v_1\|_{H^s}\|\theta_2\|_{H^s}.
\]
Applying the \(H^r\) product estimate to \eqref{eq:2.1.8}, we obtain
\begin{align*}
 \|\mathcal N_3(U_1,U_2,U_3)\|_{X^r}
 \le{}&C_s\|\partial_x\overline{v_1\theta_2}\|_{H^r}
 \bigl(\|\partial_zv_3\|_{H^r}+\|\partial_z\theta_3\|_{H^r}\bigr)\\
 \le{}&C_s\|U_1\|_{X^s}\|U_2\|_{X^s}\|U_3\|_{X^s}.
\end{align*}
This proves \eqref{eq:3.18} for \(s>2\).

At \(s=2\), let \(a:=\partial_z^{-1}\partial_xf\).
The periodic inverse derivative maps zero-mean \(L_z^2\) functions
boundedly into \(H_z^1\hookrightarrow L_z^\infty\). Together with
\(H_x^1(L_z^2)\hookrightarrow L_x^\infty L_z^2\), this gives
\begin{equation}
\label{eq:3.19}
\begin{aligned}
 &\|a\|_{L^\infty}\le C\|\partial_xf\|_{H_x^1(L_z^2)}\le C\|f\|_{H^2},\\
 &\|\partial_xa\|_{L_x^2L_z^\infty}\le C\|\partial_{xx}f\|_{L^2}\le C\|f\|_{H^2}.
\end{aligned}
\end{equation}
The one-dimensional embedding in \(x\) and
\(H^1(\Omega)\hookrightarrow L^4(\Omega)\) also imply
\[
 \|\partial_zg\|_{L_x^\infty L_z^2}
 +\|\nabla g\|_{L^4}\le C\|g\|_{H^2}.
\]
Using \(\partial_za=\partial_xf\) in the product rule and applying
H\"older's inequality, we estimate
\begin{align*}
 \|a\partial_zg\|_{H^1}
 \le{}&C\|a\|_{L^\infty}\|g\|_{H^2}
       +C\|\partial_xa\|_{L_x^2L_z^\infty}
          \|\partial_zg\|_{L_x^\infty L_z^2}\\
 &+C\|\partial_xf\|_{L^4}\|\partial_zg\|_{L^4}\\
 \le{}&C\|f\|_{H^2}\|g\|_{H^2}.
\end{align*}
The other transport term satisfies
\[
 \|f\partial_xg\|_{H^1}
 \le C\bigl(\|f\|_{L^\infty}\|g\|_{H^2}
            +\|\nabla f\|_{L^4}\|\partial_xg\|_{L^4}\bigr)
 \le C\|f\|_{H^2}\|g\|_{H^2}.
\]
Thus \(\|b(f,g)\|_{H^1}\le C\|f\|_{H^2}\|g\|_{H^2}\).
Applying the bounded projection \(\mathcal P\) proves
\eqref{eq:3.17} at \(s=2\).

For the cubic term, set
\(A(x):=\frac1\gamma\partial_x\overline{v_1\theta_2}(x)\).
The algebra property of \(H^2(\Omega)\) and boundedness of vertical
averaging ensure that
\[
 \|A\|_{H_x^1}\le C\|v_1\theta_2\|_{H^2}
 \le C\|v_1\|_{H^2}\|\theta_2\|_{H^2}.
\]
As \(A\) is independent of \(z\),
\begin{align*}
 \|A\partial_zg\|_{H^1}
 \le{}&C\|A\|_{L_x^\infty}\|g\|_{H^2}
       +C\|\partial_xA\|_{L_x^2}
          \|\partial_zg\|_{L_x^\infty L_z^2}\\
 \le{}&C\|A\|_{H_x^1}\|g\|_{H^2}.
\end{align*}
Taking \(g=v_3,\theta_3\) establishes \eqref{eq:3.18} at \(s=2\).

The quadratic output has zero vertical mean by projection. The cubic
output has the same property because its prefactor is independent of \(z\)
and \(\overline{\partial_zv_3}=\overline{\partial_z\theta_3}=0\).
Density then yields the unique extensions into \(X^{s-1}\).
The same estimates hold for complex inputs.
\end{proof}

The nonresonant part of the quadratic response involves division by
the phase. The integer numerator in \eqref{eq:3.11} provides a lower
bound for every nonzero phase, with no restriction on the aspect
ratio of the torus.

\begin{lemma}
\label{lem:3.2-phase-division}
For every admissible interaction with \(\Phi_{\sigma\mu\nu}\ne0\),
\begin{equation}
\label{eq:3.20}
 |\Phi_{\sigma\mu\nu}|
 \ge\frac{NL_z}{L_x|m_1m_2m|},
\end{equation}
and
\begin{equation}
\label{eq:3.21}
 \left|\frac{\widetilde q_{\sigma\mu\nu}}{\Phi_{\sigma\mu\nu}}\right|
 \le C|n_1m_2-n_2m_1|\,|m|\,(|m_1|+|m_2|)
 \le C\bigl(\langle\xi_{k_1}\rangle
              +\langle\xi_{k_2}\rangle\bigr)^4.
\end{equation}
The constants depend only on the fixed parameters.
\end{lemma}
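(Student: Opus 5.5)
The bound \eqref{eq:3.20} comes from the cleared-denominator identity \eqref{eq:3.11}. Its right-hand side
\[
 D:=(n_1+n_2)m_1m_2-\tfrac{\mu}{\sigma}n_1m_2m-\tfrac{\nu}{\sigma}n_2m_1m
\]
is an integer, since \(\mu/\sigma,\nu/\sigma\in\{\pm1\}\). If \(\Phi_{\sigma\mu\nu}\ne0\), then \(D\ne0\), so \(|D|\ge1\). Solving \eqref{eq:3.11} for the phase gives
\[
 |\Phi_{\sigma\mu\nu}|=\frac{NL_z}{L_x}\frac{|D|}{|m_1m_2m|}\ge\frac{NL_z}{L_x|m_1m_2m|},
\]
which is \eqref{eq:3.20}. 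This step needs no Diophantine input beyond integrality, and so no condition on \(L_z/L_x\).

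For \eqref{eq:3.21}, I first bound the polarized coefficient from \eqref{eq:2.1.38} and \eqref{eq:2.1.37}. Write \(\Delta:=n_1m_2-n_2m_1\). Then
\[
 m_{12}=i\kappa_x\frac{\Delta}{m_1},\qquad m_{21}=-i\kappa_x\frac{\Delta}{m_2},
\]
so
\[
 |\widetilde q_{\sigma\mu\nu}|\le\frac{\kappa_x|\Delta|}{2\sqrt\gamma}\Bigl(\frac1{|m_1|}+\frac1{|m_2|}\Bigr)=\frac{\kappa_x|\Delta|(|m_1|+|m_2|)}{2\sqrt\gamma\,|m_1m_2|}.
\]
Multiplying by the reciprocal of \eqref{eq:3.20} cancels the factor \(|m_1m_2|\). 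The result is
\[
 \Bigl|\frac{\widetilde q_{\sigma\mu\nu}}{\Phi_{\sigma\mu\nu}}\Bigr|\le\frac{\kappa_xL_x}{2\sqrt\gamma NL_z}\,|\Delta|\,|m|\,(|m_1|+|m_2|),
\]
the first inequality. This cancellation is the one point to get right: estimating \(|\widetilde q|\) crudely by \(|\Delta|\) without keeping the factor \(1/|m_1|+1/|m_2|\) would lose two powers of the vertical frequency.

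For the second inequality, use \(|n_j|\le\kappa_x^{-1}\langle\xi_{k_j}\rangle\) and \(|m_j|\le\kappa_z^{-1}\langle\xi_{k_j}\rangle\), together with \(|m|\le|m_1|+|m_2|\). With \(A:=\langle\xi_{k_1}\rangle+\langle\xi_{k_2}\rangle\), this gives \(|\Delta|\lesssim\langle\xi_{k_1}\rangle\langle\xi_{k_2}\rangle\le A^2\) and \(|m|(|m_1|+|m_2|)\lesssim A^2\). Hence the product is \(\lesssim A^4\), with constants depending only on \(L_x,L_z,g,\beta,\gamma\).

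There is no genuine obstacle here; the only care needed is to check that the integer \(D\) really vanishes exactly when \(\Phi\) does. This holds because \(m_1m_2m\ne0\) for admissible triples.
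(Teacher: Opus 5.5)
Your proof is correct and follows the paper's argument essentially line by line. Integrality of the cleared numerator in \eqref{eq:3.11} gives \eqref{eq:3.20}; bounding \(\widetilde q_{\sigma\mu\nu}\) while keeping the factor \(\frac1{|m_1|}+\frac1{|m_2|}\) cancels the input denominators, and crude frequency bounds give the fourth power. One small correction to your side remark: dropping that factor would cost only \(\frac{|m_1m_2|}{|m_1|+|m_2|}\le\min(|m_1|,|m_2|)\), i.e.\ at most one power of the vertical frequency (a bound by \(\bigl(\langle\xi_{k_1}\rangle+\langle\xi_{k_2}\rangle\bigr)^5\) instead of the fourth power), not two.
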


\begin{proof}
The right-hand side of \eqref{eq:3.11} is an integer, since
\(\frac\mu\sigma,\frac\nu\sigma\in\{\pm1\}\). When the phase is
nonzero, this integer has absolute value at least one, which proves
\eqref{eq:3.20}. Moreover, \eqref{eq:2.1.37}--\eqref{eq:2.1.38} imply
\[
 |\widetilde q_{\sigma\mu\nu}|
 \le\frac{\kappa_x}{2\sqrt\gamma}|n_1m_2-n_2m_1|
       \left(\frac1{|m_1|}+\frac1{|m_2|}\right).
\]
Combining this estimate with \eqref{eq:3.20} cancels the input
denominators and proves the first inequality in \eqref{eq:3.21}.
The second follows from
\(|n_j|+|m_j|\le C\langle\xi_{k_j}\rangle\) and
\(|m|\le|m_1|+|m_2|\).
\end{proof}

To convert these pointwise multiplier bounds into Sobolev estimates,
we use the following convolution inequality. The argument also allows
restrictions to selected input pairs or output modes.

\begin{lemma}
\label{lem:3.3-fourier-summation}
Let \(d\ge0\), \(s>1\), and \(s\ge d\). Suppose that a scalar
multiplier \(a(k;k_1,k_2)\), with \(k=k_1+k_2\), satisfies
\[
 |a(k;k_1,k_2)|\le C_0
       \bigl(\langle\xi_{k_1}\rangle+\langle\xi_{k_2}\rangle\bigr)^d.
\]
For trigonometric polynomials, define
\[
 \widehat{\mathcal T(f,g)}(k)
 :=\sum_{k_1+k_2=k}a(k;k_1,k_2)\widehat f(k_1)\widehat g(k_2).
\]
Then \(\mathcal T\) extends uniquely to a bounded bilinear map from
\(H^s(\Omega;\mathbb C)\times H^s(\Omega;\mathbb C)\) to
\(H^{s-d}(\Omega;\mathbb C)\), with
\begin{equation}
\label{eq:3.22}
 \|\mathcal T(f,g)\|_{H^{s-d}}
 \le C_{s,d}C_0\|f\|_{H^s}\|g\|_{H^s}.
\end{equation}
\end{lemma}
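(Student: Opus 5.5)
The plan is to prove \eqref{eq:3.22} for trigonometric polynomials by duality and a weighted Schur/Cauchy--Schwarz argument on the lattice, and then extend by density. Uniqueness of the extension is automatic, because trigonometric polynomials are dense in \(H^s\).

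Write \(F(k):=\langle\xi_k\rangle^s|\widehat f(k)|\) and \(G(k):=\langle\xi_k\rangle^s|\widehat g(k)|\). These lie in \(\ell^2(\mathbb Z^2)\), with norms comparable to the Sobolev norms. Using the multiplier bound, we have
\[
 \langle\xi_k\rangle^{s-d}|\widehat{\mathcal T(f,g)}(k)|
 \le C_0\sum_{k_1+k_2=k}
 \frac{\langle\xi_k\rangle^{s-d}(\langle\xi_{k_1}\rangle+\langle\xi_{k_2}\rangle)^d}
      {\langle\xi_{k_1}\rangle^{s}\langle\xi_{k_2}\rangle^{s}}F(k_1)G(k_2).
\]
By symmetry, we may restrict to the region \(\langle\xi_{k_1}\rangle\ge\langle\xi_{k_2}\rangle\) and handle the other half in the same way. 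On that region, \((\langle\xi_{k_1}\rangle+\langle\xi_{k_2}\rangle)^d\le 2^d\langle\xi_{k_1}\rangle^d\). Since \(s-d\ge0\) and \(\langle\xi_k\rangle\le 2\langle\xi_{k_1}\rangle\), we also have \(\langle\xi_k\rangle^{s-d}\le 2^{s-d}\langle\xi_{k_1}\rangle^{s-d}\). These two factors multiply to at most \(C\langle\xi_{k_1}\rangle^{s}\), which cancels the weight on \(k_1\). The kernel is therefore dominated by \(C\langle\xi_{k_2}\rangle^{-s}\).

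This is the key use of the hypothesis \(s\ge d\): it lets the output weight be absorbed by the high-frequency input without any gain or loss. It is the only delicate point, and I expect it to be the step to get right. If \(s-d\) were negative, one would instead need the reverse comparison, which fails in high--high to low interactions. The hypothesis \(s\ge d\) rules this out.

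After the reduction, the bound reads
\[
 \langle\xi_k\rangle^{s-d}|\widehat{\mathcal T(f,g)}(k)|
 \le C C_0\,\bigl(F*(\langle\xi\rangle^{-s}G)\bigr)(k)
\]
plus the symmetric term. Young's inequality \(\ell^2*\ell^1\to\ell^2\) then gives the \(\ell^2\) bound \(\|F\|_{\ell^2}\|\langle\xi\rangle^{-s}G\|_{\ell^1}\). Cauchy--Schwarz controls the last factor: \(\|\langle\xi\rangle^{-s}G\|_{\ell^1}\le\|\langle\xi_k\rangle^{-s}\|_{\ell^2(\mathbb Z^2)}\|G\|_{\ell^2}\). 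The sum \(\sum_k\langle\xi_k\rangle^{-2s}\) is finite precisely because \(2s>2\), which is the hypothesis \(s>1\) in dimension two. Combining the two regions yields \eqref{eq:3.22} with \(C_{s,d}\) depending on \(s\), \(d\), \(\kappa_x\) and \(\kappa_z\).

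The same argument works verbatim when the sum is restricted to selected input pairs or output modes, since it uses only the pointwise bound on \(|a|\) and positivity. For general \(f,g\in H^s\), we approximate by Fourier truncations. Bilinearity together with \eqref{eq:3.22} shows that the truncated outputs form a Cauchy sequence in \(H^{s-d}\), and this defines the unique bounded extension.
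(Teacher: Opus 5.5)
Your proposal is correct and takes essentially the paper's route. The paper's bound \(\langle\xi_k\rangle^{s-d}|a|\le C_{s,d}C_0(\langle\xi_{k_1}\rangle^s+\langle\xi_{k_2}\rangle^s)\) is your two-region reduction, and it is followed by Young's inequality \(\ell^2*\ell^1\to\ell^2\), the Cauchy--Schwarz bound on \(\|\widehat g\|_{\ell^1}\) using \(s>1\), and density, just as you do (your argument is direct, so the ``duality/Schur'' framing in your first sentence is unnecessary).
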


\begin{proof}
Since \(s-d\ge0\) and \(\xi_k=\xi_{k_1}+\xi_{k_2}\),
\[
 \langle\xi_k\rangle^{s-d}|a(k;k_1,k_2)|
 \le C_{s,d}C_0
       \bigl(\langle\xi_{k_1}\rangle^s
             +\langle\xi_{k_2}\rangle^s\bigr).
\]
Taking absolute values term by term and applying Young's convolution
inequality on \(\mathbb Z^2\), we obtain
\begin{equation*}
 \|\mathcal T(f,g)\|_{H^{s-d}}
 \le C_{s,d}C_0\Bigl(
       \|\langle\xi_k\rangle^s\widehat f(k)\|_{\ell^2}
       \|\widehat g\|_{\ell^1}
       +\|\widehat f\|_{\ell^1}
        \|\langle\xi_k\rangle^s\widehat g(k)\|_{\ell^2}\Bigr).
\end{equation*}
The fixed volume factor is included in the constant. Since \(s>1\),
Cauchy--Schwarz shows that
\[
 \|\widehat f\|_{\ell^1}
 \le\left(\sum_{k\in\mathbb Z^2}\langle\xi_k\rangle^{-2s}\right)^{\frac12}
       \|\langle\xi_k\rangle^s\widehat f(k)\|_{\ell^2}
 \le C_s\|f\|_{H^s},
\]
and the same bound holds for \(g\). Substitution proves
\eqref{eq:3.22}, and density establishes the extension. Finally,
restricting the multiplier to selected input pairs or output modes
preserves its pointwise bound, so the estimate remains valid.
\end{proof}



\section{Proofs and refinements of the main results}
\label{sec:4-exact-resonances-picard-obstruction}

Subsection~\ref{subsec:4.1-main-proofs} proves
Theorems~\ref{thm:2.2.1-second-picard-obstruction}--\ref{thm:2.2.4-mean-resonant-averaging}
in the order stated, using the wave formulas and multilinear estimates
of Section~\ref{sec:3-quadratic-wave-interactions}.
Subsection~\ref{subsec:4.2-anisotropic-dyadic} then refines the Picard
bounds in tensor-product Sobolev spaces and on physical dyadic shells.

\subsection{Proofs of Theorems~\ref{thm:2.2.1-second-picard-obstruction}--\ref{thm:2.2.4-mean-resonant-averaging}}
\label{subsec:4.1-main-proofs}
\label{subsec:4.3-mean-averaging}

We begin with a same-branch resonant family that produces the lower
bound in Theorem~\ref{thm:2.2.1-second-picard-obstruction}.

\begin{corollary}
\label{cor:4.1.1-active-high-low-resonances}
For every \(\sigma\in\{\pm1\}\) and integer \(M\ge2\), the triple
\begin{equation}
\label{eq:4.1.1}
 (k_1,k_2,k)=\bigl((1,1),\,(-M^2,M),\,(1-M^2,M+1)\bigr)
\end{equation}
satisfies \(k=k_1+k_2\) and is an active exact resonance in the channel
\((\sigma,\sigma)\to\sigma\), with
\begin{equation}
 \widetilde q_{\sigma\sigma\sigma}(k;k_1,k_2)
 =\frac{i\kappa_x}{2\sqrt\gamma}(M^2-1).
\end{equation}
The frequencies and coefficient satisfy
\begin{equation}
\label{eq:4.1.3}
 \langle\xi_{k_2}\rangle\simeq\langle\xi_k\rangle
 \simeq|\widetilde q_{\sigma\sigma\sigma}(k;k_1,k_2)|\simeq M^2,
\end{equation}
uniformly for \(M\ge2\), with constants depending only on
\(L_x,L_z,\gamma\).
\end{corollary}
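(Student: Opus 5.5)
This is a direct verification, and I will carry it out in four steps.

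\emph{Admissibility and the convolution identity.} With \(k_1=(1,1)\) and \(k_2=(-M^2,M)\) we get \(k_1+k_2=(1-M^2,M+1)\), which is \(k\). The vertical indices are \(m_1=1\), \(m_2=M\), and \(m=M+1\). All three are nonzero, so the triple is admissible.

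\emph{Resonance.} By Theorem~\ref{thm:2.2.2-exact-lattice-resonances}(i), the same-branch phase vanishes if and only if \(n_1m_2^2+n_2m_1^2=0\). Here
\[
 n_1m_2^2+n_2m_1^2=M^2-M^2=0 .
\]
One could also substitute directly into \eqref{eq:3.11} with \(\mu/\sigma=\nu/\sigma=1\). The right-hand side is then \(nm_1m_2-n_1m_2m-n_2m_1m\). Since \(n=n_1+n_2\), this equals \(-(n_1m_2^2+n_2m_1^2)\), which is zero. In the parametrization \eqref{eq:2.2.5} we have \(d_{\mathrm s}=\gcd(1,M^2)=1\) and \(\ell=1\).

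\emph{Coefficient.} I would use the factorization \eqref{eq:3.13}. The cross term is
\[
 n_1m_2-n_2m_1=M+M^2=M(M+1),
\]
and
\[
 \frac1{m_1}-\frac1{m_2}=1-\frac1M=\frac{M-1}{M}.
\]
Their product is \((M+1)(M-1)=M^2-1\), so
\[
 \widetilde q_{\sigma\sigma\sigma}=\frac{i\kappa_x}{2\sqrt\gamma}(M^2-1).
\]
This agrees with \eqref{eq:2.2.6}, namely \(-i\kappa_x n/(2\sqrt\gamma)\) with \(n=1-M^2\). The coefficient is nonzero for \(M\ge2\), so the resonance is active. This is consistent with the activity criterion \(\ell(m_2^2-m_1^2)\ne0\).

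\emph{Size comparisons.} The coefficient has modulus \(\frac{\kappa_x}{2\sqrt\gamma}(M^2-1)\). This lies between \(\frac{3\kappa_x}{8\sqrt\gamma}M^2\) and \(\frac{\kappa_x}{2\sqrt\gamma}M^2\), because \(M^2-1\ge\frac34M^2\) when \(M\ge2\).

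For the frequencies,
\[
 \langle\xi_{k_2}\rangle^2=1+\kappa_x^2M^4+\kappa_z^2M^2
 \quad\text{and}\quad
 \langle\xi_{k}\rangle^2=1+\kappa_x^2(M^2-1)^2+\kappa_z^2(M+1)^2 .
\]
Each of these is bounded below by \(\kappa_x^2\cdot\frac9{16}M^4\). Each is bounded above by \((1+\kappa_x^2+4\kappa_z^2)M^4\), using \(1\le M^4\), \(M^2\le M^4\), and \((M+1)^2\le 4M^4\).

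Taking square roots gives all the comparabilities in \eqref{eq:4.1.3}. The constants depend only on \(\kappa_x\), \(\kappa_z\), and \(\gamma\), hence only on \(L_x\), \(L_z\), and \(\gamma\).

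There is no real obstacle. The only point needing care is the algebraic simplification of the coefficient, and the two independent computations above serve as a consistency check on it.
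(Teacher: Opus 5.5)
Your proof is correct and follows the paper's route: a direct verification of admissibility, of the vanishing same-branch phase, of the coefficient, and of the explicit frequency sizes. The only difference is cosmetic. You evaluate the coefficient through the factorization \eqref{eq:3.13} rather than computing \(m_{12}=i\kappa_x(M+M^2)\) and \(m_{21}=-i\kappa_x(M+1)\) separately; this is equally valid but hides the cancellation of the diagnostic contributions \(\pm i\kappa_xM\) that the paper points out.
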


\begin{proof}
The vertical indices are nonzero, and the phase in \eqref{eq:2.1.36}
satisfies
\[
 \frac{\Phi_{\sigma\sigma\sigma}(k;k_1,k_2)}{\sigma N}
 =\frac{L_z}{L_x}\left(\frac{1-M^2}{M+1}-1+M\right)=0.
\]
By \eqref{eq:2.1.37}, the ordered multipliers are
\[
 m_{12}=i\kappa_x(M+M^2)
 \quad\text{and}\quad
 m_{21}=-i\kappa_x(M+1).
\]
The diagnostic contributions \(i\kappa_xM\) and \(-i\kappa_xM\)
cancel. Substitution into \eqref{eq:2.1.38} identifies the stated
coefficient, which is nonzero for \(M\ge2\). The frequency comparisons
follow from
\[
 |\xi_{k_2}|^2=\kappa_x^2M^4+\kappa_z^2M^2
 \quad\text{and}\quad
 |\xi_k|^2=\kappa_x^2(M^2-1)^2+\kappa_z^2(M+1)^2.
\]
The constants are uniform for \(M\ge2\).
\end{proof}

For \(k\in K\), \(a\in\mathbb R\), and \(\sigma\in\{\pm1\}\),
define the real-valued mode
\begin{equation}
\label{eq:4.1.4}
 E_\sigma(k;a)(x,z):=a
 \begin{pmatrix}
 \frac{1}{\sqrt\gamma}\cos(\xi_k\cdot(x,z))\\[2mm]
 \frac{\sigma}{\sqrt{g\beta}}\sin(\xi_k\cdot(x,z))
 \end{pmatrix}.
\end{equation}
By \eqref{eq:2.1.25} and \eqref{eq:2.1.29},
\begin{equation}
\label{eq:4.1.5}
 Z_\rho[E_\sigma(k;a)]=ae^{i\rho\sigma\xi_k\cdot(x,z)},
 \qquad \rho\in\{\pm1\},
\end{equation}
and
\begin{equation}
\label{eq:4.1.6}
 \|E_\sigma(k;a)\|_{X^r}^2=L_xL_z a^2\langle\xi_k\rangle^{2r},
 \qquad r\in\mathbb R.
\end{equation}
These modes belong to \(X^\infty\).

\begin{proof}[Proof of Theorem~\ref{thm:2.2.1-second-picard-obstruction}]
Fix \(t\ne0\), \(s\in\mathbb R\), and \(0\le\delta<1\).
For the frequencies in \eqref{eq:4.1.1}, choose the amplitudes
\begin{equation}
\label{eq:4.1.7}
 A_1:=(L_xL_z)^{-\frac12}\langle\xi_{k_1}\rangle^{-s},
\end{equation}
and
\begin{equation}
\label{eq:4.1.8}
 A_M:=(L_xL_z)^{-\frac12}\langle\xi_{k_2}\rangle^{-s}.
\end{equation}
Define the corresponding real inputs by
\begin{equation}
\label{eq:4.1.9}
 H:=E_+(k_1;A_1),
\end{equation}
and
\begin{equation}
\label{eq:4.1.10}
 G_M:=E_+(k_2;A_M).
\end{equation}
The normalization follows from \eqref{eq:4.1.6}:
\begin{equation}
\label{eq:4.1.11}
 \|H\|_{X^s}=\|G_M\|_{X^s}=1,\qquad M\ge2.
\end{equation}
By \eqref{eq:4.1.5}, the inputs occupy \(k_1,k_2\) in the \(+\)
branch and \(-k_1,-k_2\) in the \(-\) branch. The possible output
frequencies are therefore
\[
 \{\pm(k_1+k_2),\,\pm(k_1-k_2)\}
 =\{\pm k,\,\pm(M^2+1,1-M)\}.
\]
These frequencies are pairwise distinct for \(M\ge2\). At \(k\),
only the \((+,+)\) input branches contribute. Formula \eqref{eq:2.1.38}
implies \(\widetilde q_{-++}=0\), while \(\widetilde q_{+++}\)
includes both input orders.

By \eqref{eq:3.15} and
Corollary~\ref{cor:4.1.1-active-high-low-resonances},
\[
 \widehat{Z_+[\mathcal B_t(H,G_M)]}(k)
 =t e^{it\omega_+(k)}\widetilde q_{+++}(k;k_1,k_2)A_1A_M.
\]
In particular,
\begin{equation}
\label{eq:4.1.12}
 \left|\widehat{Z_+[\mathcal B_t(H,G_M)]}(k)\right|
 =\frac{\kappa_x}{2\sqrt\gamma}|t|(M^2-1)A_1A_M.
\end{equation}
The output is real-valued, so the coefficient at \((-k,-)\) is the
complex conjugate of the one at \((k,+)\). Retaining these two modes
in \eqref{eq:2.1.29}, we obtain
\[
 \|\mathcal B_t(H,G_M)\|_{X^{s-\delta}}^2
 \ge L_xL_z\langle\xi_k\rangle^{2(s-\delta)}
 \left|\widehat{Z_+[\mathcal B_t(H,G_M)]}(k)\right|^2.
\]
The amplitudes \eqref{eq:4.1.7}--\eqref{eq:4.1.8} and the coefficient
identity \eqref{eq:4.1.12} therefore imply
\begin{equation}
\label{eq:4.1.13}
 \|\mathcal B_t(H,G_M)\|_{X^{s-\delta}}
 \ge\frac{\kappa_x|t|}{2\sqrt{\gamma L_xL_z}}(M^2-1)
 \frac{\langle\xi_k\rangle^{s-\delta}}
      {\langle\xi_{k_1}\rangle^s\langle\xi_{k_2}\rangle^s}.
\end{equation}
The ratio \(\frac{\langle\xi_k\rangle}{\langle\xi_{k_2}\rangle}\)
is bounded above and away from zero by \eqref{eq:4.1.3}. Hence, for
every \(s\in\mathbb R\),
\[
 \frac{\langle\xi_k\rangle^{s-\delta}}
      {\langle\xi_{k_1}\rangle^s\langle\xi_{k_2}\rangle^s}
 =\langle\xi_{k_1}\rangle^{-s}
  \left(\frac{\langle\xi_k\rangle}{\langle\xi_{k_2}\rangle}\right)^s
  \langle\xi_k\rangle^{-\delta}
 \ge c_{s,\delta}M^{-2\delta}.
\]
Together with \eqref{eq:4.1.13}, this proves \eqref{eq:2.2.2}.
Since the lower bound tends to infinity as \(M\to\infty\) and both
inputs have unit \(X^s\) norm, no bounded bilinear extension into
\(X^{s-\delta}\) can exist.

For the upper bound, let \(s\ge2\) and \(H_1,H_2\in X^\infty\).
Using \eqref{eq:2.2.1}, the isometry from
Proposition~\ref{prop:2.1.1-unitary-linear-evolution}, and
\eqref{eq:3.17}, we obtain, for \(t>0\),
\begin{align*}
 \|\mathcal B_t(H_1,H_2)\|_{X^{s-1}}
 \le{}&\int_0^t
 \|\mathcal N_2(e^{\tau\mathcal L}H_1,e^{\tau\mathcal L}H_2)\|_{X^{s-1}}
 \,\mathrm d\tau\\*
 &+\int_0^t
 \|\mathcal N_2(e^{\tau\mathcal L}H_2,e^{\tau\mathcal L}H_1)\|_{X^{s-1}}
 \,\mathrm d\tau\\*
 \le{}&C_st\,\|H_1\|_{X^s}\|H_2\|_{X^s}.
\end{align*}
For \(t<0\), the same estimate on \([t,0]\) has \(|t|\) in place
of \(t\), and \(\mathcal B_0=0\). Density of \(X^\infty\) in \(X^s\)
provides the unique bounded bilinear extension satisfying
\eqref{eq:2.2.3}. The argument also applies to complex inputs.
Together with the lower bound, this establishes the sharp
one-derivative loss for \(s\ge2\).
\end{proof}

We next solve the integer phase equations in all four signed channels
and evaluate the corresponding polarized coefficients.

\begin{proof}[Proof of Theorem~\ref{thm:2.2.2-exact-lattice-resonances}]
Fix \(\sigma\in\{\pm1\}\) and \(m_1,m_2\in\mathbb Z_*\) with
\(m:=m_1+m_2\ne0\). By \eqref{eq:3.11}, the resonance condition
reduces in cases \emph{(i)}--\emph{(iv)}, respectively, to
\begin{equation}
 n_1m_2^2+n_2m_1^2=0,
\end{equation}
\begin{equation}
 n_1m_2(m_2+2m_1)-n_2m_1^2=0,
\end{equation}
\begin{equation}
 n_2m_1(m_1+2m_2)-n_1m_2^2=0,
\end{equation}
and
\begin{equation}
 n_1m_2(2m_1+m_2)+n_2m_1(m_1+2m_2)=0.
\end{equation}
Clearing only the nonzero vertical indices retains all horizontal
zero modes. In each of the first three equations, at least one
coefficient is nonzero because \(m_1m_2\ne0\). In the fourth,
simultaneous vanishing would require \(2m_1+m_2=m_1+2m_2=0\),
contrary to \(m_1m_2\ne0\). Hence all four divisors in the theorem
are positive.

For \(A,B\in\mathbb Z\) not both zero, let
\(d:=\gcd(|A|,|B|)\). The integer solutions satisfy
\begin{equation}
\label{eq:4.1.18}
 Ax+By=0
 \quad\Longleftrightarrow\quad
 (x,y)=r\left(\frac{B}{d},-\frac{A}{d}\right),
\end{equation}
where \(r\in\mathbb Z\) is unique. Division by \(d\) leaves coprime
coefficients, so the displayed generator is primitive. If one
coefficient is zero, the other reduced coefficient is \(\pm1\), and
the formula remains valid. Applying \eqref{eq:4.1.18} to the four
phase equations proves
\eqref{eq:2.2.5}, \eqref{eq:2.2.7}, \eqref{eq:2.2.9}, and
\eqref{eq:2.2.11}, with \(r=\ell\) in cases \emph{(i)}, \emph{(iii)},
and \emph{(iv)}, and \(r=-\ell\) in case \emph{(ii)}.
Adding the input indices recovers the stated horizontal output.
Each reduction is reversible, which proves the necessity and
sufficiency of the parametrizations and the uniqueness of \(\ell\).

The primitive generators are nonzero, so a resonance is nontrivial if
and only if \(\ell\ne0\). For such a resonance, neither horizontal
input vanishes in case \emph{(i)}. In case \emph{(ii)}, only the second
input can vanish, precisely when \(m_2=-2m_1\). In case \emph{(iii)},
only the first can vanish, precisely when \(m_1=-2m_2\). In case
\emph{(iv)}, these same conditions characterize \(n_2=0\) and
\(n_1=0\), respectively.
The mixed channels have nonzero horizontal output because
\(n=\frac{\ell m^2}{d_+}\) in case \emph{(ii)} and
\(n=\frac{\ell m^2}{d_-}\) in case \emph{(iii)}.
In cases \emph{(i)} and \emph{(iv)}, the condition \(n=0\) is equivalent
to \(m_1=m_2\), since \(m_1+m_2\ne0\).

We now evaluate \eqref{eq:2.1.38} on these resonances.
In case \emph{(i)}, \eqref{eq:2.2.5} implies
\[
 n_1m_2-n_2m_1
 =\frac{\ell}{d_{\mathrm s}}
   \bigl(m_1^2m_2+m_2^2m_1\bigr)
 =\frac{\ell}{d_{\mathrm s}}m_1m_2m.
\]
The factorization \eqref{eq:3.13} therefore reduces to
\begin{equation}
 \widetilde q_{\sigma\sigma\sigma}
 =\frac{i\kappa_x}{2\sqrt\gamma}
   \frac{\ell m_1m_2m}{d_{\mathrm s}}
   \left(\frac{1}{m_1}-\frac{1}{m_2}\right)
 =\frac{i\kappa_x\ell}{2\sqrt\gamma}
   \frac{m_2^2-m_1^2}{d_{\mathrm s}}
 =-\frac{i\kappa_x n}{2\sqrt\gamma}.
\end{equation}
This proves \eqref{eq:2.2.6}. For \(\ell\ne0\), the coefficient
vanishes precisely when \(m_1^2=m_2^2\). Since \(m_1+m_2\ne0\),
this is equivalent to \(m_1=m_2\).

In case \emph{(ii)}, \((\mu,\nu)=(-\sigma,\sigma)\), and
\eqref{eq:2.1.38} reduces to
\(\widetilde q_{\sigma,-\sigma,\sigma}=\frac{m_{12}}{2\sqrt\gamma}\).
Substituting \eqref{eq:2.2.7}, we find
\[
 n_1m_2-n_2m_1
 =\frac{\ell}{d_+}
   \bigl[m_1^2m_2-m_1m_2(m_2+2m_1)\bigr]
 =-\frac{\ell}{d_+}m_1m_2m.
\]
Consequently,
\begin{equation}
 \widetilde q_{\sigma,-\sigma,\sigma}
 =-\frac{i\kappa_x\ell}{2\sqrt\gamma}\frac{m_2m}{d_+}
 =-\frac{i\kappa_x n}{2\sqrt\gamma}\frac{m_2}{m},
\end{equation}
which is \eqref{eq:2.2.8}. Since \(m_2m\ne0\), this coefficient
vanishes if and only if \(\ell=0\).

Case \emph{(iii)} follows from case \emph{(ii)} by exchanging
\((k_1,\mu)\) and \((k_2,\nu)\). Under this exchange,
\(d_+\) becomes \(d_-\) and \(\ell\) is unchanged. By
\eqref{eq:3.12},
\[
 \widetilde q_{\sigma,\sigma,-\sigma}(k;k_1,k_2)
 =\widetilde q_{\sigma,-\sigma,\sigma}(k;k_2,k_1)
 =-\frac{i\kappa_x\ell}{2\sqrt\gamma}\frac{m_1m}{d_-}
 =-\frac{i\kappa_x n}{2\sqrt\gamma}\frac{m_1}{m}.
\]
This is \eqref{eq:2.2.10}. Since \(m_1m\ne0\), the coefficient
vanishes exactly when \(\ell=0\). Thus every nontrivial mixed-branch
resonance is active, including those with a zero horizontal input.
In case \emph{(iv)}, both indicators in \eqref{eq:2.1.38}
vanish, so \(\widetilde q_{\sigma,-\sigma,-\sigma}=0\) for every
admissible triple, independently of resonance.

Finally, exchanging the input pairs \((k_1,\mu)\) and \((k_2,\nu)\)
in the parametrizations interchanges cases \emph{(ii)} and \emph{(iii)}
with \(\ell\) unchanged, and maps cases \emph{(i)} and \emph{(iv)}
to themselves with \(\ell\) replaced by \(-\ell\).
Under the simultaneous reflection \(k_j\mapsto-k_j\), \(j=1,2\), and
\((\sigma,\mu,\nu)\mapsto(-\sigma,-\mu,-\nu)\), the divisors and
primitive generators are unchanged. The horizontal indices change sign,
so \(\ell\) is replaced by \(-\ell\) in all four cases.
The corresponding coefficient identities follow from
\eqref{eq:3.12} and \eqref{eq:3.16}.
\end{proof}

The support classification requires cancellation of the averaged
corrections as well as the local transport terms. We verify both on
lattice rays and then prove that transparency forces collinearity.

\begin{proof}[Proof of Theorem~\ref{thm:2.2.3-maximal-transparent-rays}]
Let \(k_0=(n_0,m_0)\in K\) be primitive and set
\(\lambda_0:=\frac{L_zn_0}{L_xm_0}\). For smooth scalar functions
\(f,g\) with Fourier support in \(\Gamma_{k_0}\), the multiplier
identities are
\begin{equation}
\label{eq:4.1.21}
 \partial_z^{-1}\partial_xf=\lambda_0 f,
\end{equation}
and \(\partial_xg=\lambda_0\partial_zg\). Therefore,
\begin{equation}
\label{eq:4.1.22}
        b(f,g)
        =(\partial_z^{-1}\partial_xf)\partial_zg-f\partial_xg=0.
\end{equation}

A pair \(j_1k_0,j_2k_0\) contributes to \(\overline{fg}\) only if
\((j_1+j_2)m_0=0\). Then \(j_1+j_2=0\), so the horizontal output
also vanishes. Hence
\begin{equation}
\label{eq:4.1.23}
        \partial_x\overline{fg}=0.
\end{equation}
Equations \eqref{eq:4.1.22}--\eqref{eq:4.1.23} and
\eqref{eq:2.1.5} imply \(\mathcal P b(f,g)=0\). For smooth states
\(U_j\) supported on the ray, we therefore have
\(\mathcal N_2(U_1,U_2)=0\). The factor
\(\partial_x\overline{v_1\theta_2}\) in \eqref{eq:2.1.8} also
vanishes, so \(\mathcal N_3(U_1,U_2,U_3)=0\).
This proves the ordered identities \eqref{eq:2.2.14} and the implication
\emph{(iii)} \(\Rightarrow\) \emph{(i)}.

For the converse, assume \emph{(i)} and suppose that \(S\) contains
two frequencies \(k_j=(n_j,m_j)\), \(j=1,2\), such that
\[
        n_1m_2-n_2m_1\ne0.
\]
Since at least one of \(m_1+m_2\) and \(m_1-m_2\) is nonzero,
choose \(\varepsilon\in\{\pm1\}\) such that
\(q:=k_1+\varepsilon k_2\in K\). The symmetry \(S=-S\) ensures
that both input frequencies lie in \(S\). Define
\begin{equation}
 v=e^{i\xi_{k_1}\cdot(x,z)}+e^{i\xi_{-k_1}\cdot(x,z)},
\end{equation}
and
\begin{equation}
 \theta=e^{i\xi_{\varepsilon k_2}\cdot(x,z)}+e^{i\xi_{-\varepsilon k_2}\cdot(x,z)}.
\end{equation}
Then \(U=(v,\theta)\) is a real state in \(\mathcal R_S^\infty\).
Only the ordered pair
\((k_1,\varepsilon k_2)\) contributes to the Fourier coefficient of
\(b(v,\theta)\) at \(q\). Indeed, noncollinearity implies that
\(\eta_1k_1+\eta_2\varepsilon k_2=k_1+\varepsilon k_2\), with
\(\eta_j\in\{\pm1\}\), holds only when \(\eta_1=\eta_2=1\).
The vertical component of \(q\) is nonzero, so \(\mathcal P\) leaves
this coefficient unchanged. Consequently, \eqref{eq:3.6} implies
\begin{equation}
\label{eq:4.1.26}
 \widehat{[\mathcal N_2(U,U)]_\theta}(q)
 =i\varepsilon\kappa_x
   \frac{n_1m_2-n_2m_1}{m_1}\ne0.
\end{equation}
This contradicts \eqref{eq:2.2.13} and proves
\emph{(i)} \(\Rightarrow\) \emph{(ii)}.

For \emph{(ii)} \(\Rightarrow\) \emph{(iii)}, divide a fixed
\(k_*\in S\) by the greatest common divisor of its components to
obtain a primitive \(k_0=(n_0,m_0)\), with \(m_0\ne0\).
For any \((n,m)\in S\), collinearity gives \(nm_0=mn_0\).
Coprimality implies \(m_0\mid m\), hence \((n,m)=j(n_0,m_0)\)
for some \(j\in\mathbb Z_*\). Therefore \(S\subset\Gamma_{k_0}\).

For every \(s\in\mathbb R\), inclusion of symmetric support sets is
equivalent to inclusion of their spaces \(\mathcal R_S^s\), since each
allowed frequency is realized by a real trigonometric polynomial.
Let \(S\) be a nonempty transparent set maximal under inclusion.
By \emph{(iii)}, \(S\subset\Gamma_{k_0}\), and the first
part of the proof shows that \(\Gamma_{k_0}\) is transparent. Hence
\(S=\Gamma_{k_0}\). Conversely, any proper symmetric enlargement of
\(\Gamma_{k_0}\) contains a frequency not collinear with \(k_0\).
The nonzero output in \eqref{eq:4.1.26} rules out transparency of
that enlargement. Finally, two primitive vectors generate the same
ray exactly when they differ by a sign.
\end{proof}

We conclude the main proofs with
Theorem~\ref{thm:2.2.4-mean-resonant-averaging}.
We first use the cancellations at zero horizontal output, then identify
the resonant average and bound the nonresonant remainder.

\begin{proof}[Proof of Theorem~\ref{thm:2.2.4-mean-resonant-averaging}]
We first prove \emph{(i)}. Let \(k=(0,m)\in K\).
By Theorem~\ref{thm:2.2.2-exact-lattice-resonances}, the same-branch
resonant coefficient is proportional to \(n\), and every nontrivial
mixed-branch resonance has \(n\ne0\). Trivial resonances have
\(n_1=n_2=0\), hence zero coefficient. The opposite-input channel
is identically transparent. This proves \eqref{eq:2.2.16}.

For a nonresonant interaction at this output, substituting
\(n_2=-n_1\) into \eqref{eq:2.1.36}--\eqref{eq:2.1.38} gives
\begin{equation}
\label{eq:4.3.1}
\begin{aligned}
 \Phi_{\sigma\mu\nu}
 ={}&-\frac{N\kappa_xn_1}{\kappa_z}
       \left(\frac\mu{m_1}-\frac\nu{m_2}\right),\\
 \widetilde q_{\sigma\mu\nu}
 ={}&\frac{i\kappa_xn_1m}{2\sqrt\gamma}
       \left(\frac{\mathbf1_{\{\nu=\sigma\}}}{m_1}
             -\frac{\mathbf1_{\{\mu=\sigma\}}}{m_2}\right).
\end{aligned}
\end{equation}
Division in each of the four channels leads to
\begin{equation}
\label{eq:4.3.2}
 \frac{\widetilde q_{\sigma\mu\nu}}{i\Phi_{\sigma\mu\nu}}
 =\frac{\kappa_z}{2\sigma N\sqrt\gamma}
 \begin{cases}
  -m,&(\mu,\nu)=(\sigma,\sigma),\\
  m_2,&(\mu,\nu)=(-\sigma,\sigma),\\
  m_1,&(\mu,\nu)=(\sigma,-\sigma),\\
  0,&(\mu,\nu)=(-\sigma,-\sigma).
 \end{cases}
\end{equation}
Every canceled factor is nonzero on the set
\(\Phi_{\sigma\mu\nu}\ne0\). In particular, \(n_1\ne0\), and
\(m_1\ne m_2\) in the same-branch channel. Formula
\eqref{eq:4.3.2} is used only on this nonresonant set.

At \(n=0\), the external phase in \eqref{eq:3.15} equals one.
The resonant contributions vanish by \eqref{eq:2.2.16}, while each
nonresonant contribution has multiplier
\(
 (1-e^{-it\Phi_{\sigma\mu\nu}})
       \frac{\widetilde q_{\sigma\mu\nu}}{i\Phi_{\sigma\mu\nu}}.
\)
By \eqref{eq:4.3.2}, its absolute value is at most
\(C(|m_1|+|m_2|)\), uniformly for \(t\in\mathbb R\).
Set this multiplier to zero on all other output modes and on
resonant pairs. Lemma~\ref{lem:3.3-fourier-summation}, with \(d=1\),
and the wave-energy identity \eqref{eq:2.1.29} prove
\eqref{eq:2.2.17} for trigonometric polynomials and every \(s\ge2\).
For each fixed \(t\), density identifies this extension with the
horizontal mean of \(\mathcal B_t\) from
Theorem~\ref{thm:2.2.1-second-picard-obstruction}. The bound is
independent of \(t\), which proves \eqref{eq:2.2.17} for all
inputs in \(X^s\), uniformly in time.

To prove \emph{(ii)}, we first take constrained trigonometric
polynomial inputs. For \(t\ne0\), the group property implies
\begin{equation}
\label{eq:4.3.3}
 \frac{1}{t}e^{-t\mathcal L}\mathcal B_t(H_1,H_2)
 =\frac{1}{t}\int_0^t e^{-\tau\mathcal L}
 \Bigl[\mathcal N_2(e^{\tau\mathcal L}H_1,e^{\tau\mathcal L}H_2)
       +\mathcal N_2(e^{\tau\mathcal L}H_2,e^{\tau\mathcal L}H_1)\Bigr]
       \,\mathrm d\tau.
\end{equation}
Thus the normalized response is the time average of the quadratic
interaction conjugated by the linear group. For the chosen inputs,
\eqref{eq:3.15} is a finite sum. After multiplication by
\(\frac{1}{t}e^{-t\mathcal L}\), each resonant multiplier equals
\(\widetilde q_{\sigma\mu\nu}\), whereas each nonresonant multiplier
satisfies
\[
 \frac{1-e^{-it\Phi_{\sigma\mu\nu}}}{it\Phi_{\sigma\mu\nu}}
       \widetilde q_{\sigma\mu\nu}
 \longrightarrow0\qquad\text{as }|t|\to\infty.
\]
Thus the normalized response converges to
\(\mathcal Q_{\mathrm{res}}(H_1,H_2)\) in every Sobolev space for
these fixed polynomial inputs. The bound
\eqref{eq:2.2.3} and linear unitarity imply, for \(s\ge2\),
\[
 \sup_{t\ne0}
 \left\|\frac{1}{t} e^{-t\mathcal L}\mathcal B_t(H_1,H_2)\right\|_{X^{s-1}}
 \le C_s\|H_1\|_{X^s}\|H_2\|_{X^s}.
\]
Passing to the limit on the finite Fourier support proves
\eqref{eq:2.2.18} for polynomial inputs. Density then extends
\(\mathcal Q_{\mathrm{res}}\) uniquely to a bounded bilinear map from
\(X^s\times X^s\) to \(X^{s-1}\). For smooth inputs, rapid Fourier
decay and the polynomial growth of \(\widetilde q\) identify the
extension with \eqref{eq:2.2.15}. The map is symmetric by
\eqref{eq:3.12}. It preserves real-valued inputs by
\eqref{eq:3.16} and the invariance of the resonant set under
simultaneous frequency and branch reflection. Its horizontal mean
vanishes by \eqref{eq:2.2.16}.

For general \(H_1,H_2\in X^s\), choose constrained trigonometric
polynomials \(H_j^{(J)}\to H_j\) in \(X^s\), \(j=1,2\).
Bilinearity, the preceding uniform bound, and \eqref{eq:2.2.18}
bound the sum of the approximation errors in the normalized response
and the resonant operator by
\[
 C_s\Bigl(\|H_1-H_1^{(J)}\|_{X^s}\|H_2\|_{X^s}
       +\|H_1^{(J)}\|_{X^s}\|H_2-H_2^{(J)}\|_{X^s}\Bigr),
\]
independently of \(t\ne0\). For each fixed \(J\), the polynomial
responses converge as \(|t|\to\infty\). Letting first \(|t|\to\infty\)
and then \(J\to\infty\) proves \eqref{eq:2.2.19} in \(X^{s-1}\).

For \emph{(iii)}, return to polynomial inputs \(H_1,H_2\) and set
\(Z_{j,\mu}=Z_\mu[H_j]\). Omitting the input arguments in
\(\mathcal B_t\) and \(\mathcal Q_{\mathrm{res}}\) in the next
display, we subtract the resonant contribution from
\eqref{eq:3.15} to obtain, for \(k\in K\),
\begin{equation}
\label{eq:4.3.4}
 \widehat{Z_\sigma[e^{-t\mathcal L}\mathcal B_t
                         -t\mathcal Q_{\mathrm{res}}]}(k)
 =\sum_{\mu,\nu=\pm}
     \sum_{\substack{k_1+k_2=k\\\Phi_{\sigma\mu\nu}\ne0}}
     \frac{1-e^{-it\Phi_{\sigma\mu\nu}}}{i\Phi_{\sigma\mu\nu}}\times\widetilde q_{\sigma\mu\nu}
     \widehat Z_{1,\mu}(k_1)\widehat Z_{2,\nu}(k_2).
\end{equation}
By \(|1-e^{-it\Phi}|\le2\) and
Lemma~\ref{lem:3.2-phase-division}, each multiplier on the
right-hand side is bounded in absolute value by
\(
 C\bigl(\langle\xi_{k_1}\rangle
        +\langle\xi_{k_2}\rangle\bigr)^4,
\)
uniformly in \(t\). Applying
Lemma~\ref{lem:3.3-fourier-summation} with \(d=4\) to each branch
and summing the resulting bounds, we obtain \eqref{eq:2.2.20}
for \(s\ge4\) from the wave-energy identity \eqref{eq:2.1.29}.
At each fixed time, the extensions of \(\mathcal B_t\) and
\(\mathcal Q_{\mathrm{res}}\) identify the extended remainder
with the left-hand side of \eqref{eq:4.3.4}. Density thus proves
\eqref{eq:2.2.20} for arbitrary \(H_1,H_2\in X^s\), with a
constant independent of time. Division by \(|t|\) establishes
\eqref{eq:2.2.21} for every \(t\ne0\). The same arguments apply
to complexified inputs.
\end{proof}

\subsection{Anisotropic and dyadic bounds}
\label{subsec:4.2-anisotropic-dyadic}

We now refine the fixed-time bounds of
Theorem~\ref{thm:2.2.1-second-picard-obstruction} in two directions.
A mixed-branch family with fixed vertical indices isolates diagnostic
transport and excludes compensation by finite vertical Sobolev
regularity. We then use the same-branch family \eqref{eq:4.1.1}
to obtain matching bounds on physical dyadic shells.

\begin{corollary}
\label{cor:4.2.1-diagnostic-mixed-resonances}
For every integer \(q\ge1\), set
\begin{equation}
\label{eq:4.2.1}
 (\ell_0,h_q,o_q):=\bigl((0,-2),\,(q,1),\,(q,-1)\bigr).
\end{equation}
Then \(o_q=\ell_0+h_q\), and for each \(\sigma\in\{\pm1\}\) the
triple is an active exact resonance in the channel
\((\sigma,-\sigma)\to\sigma\), with
\begin{equation}
\label{eq:4.2.2}
 \Phi_{\sigma,\sigma,-\sigma}(o_q;\ell_0,h_q)=0,
\end{equation}
and
\begin{equation}
 \widetilde q_{\sigma,\sigma,-\sigma}(o_q;\ell_0,h_q)=-\frac{i\kappa_xq}{\sqrt\gamma}.
\end{equation}
The physical frequencies satisfy
\[
 |\xi_{o_q}|=|\xi_{h_q}|
 \quad\text{and}\quad
 \langle\xi_{o_q}\rangle=\langle\xi_{h_q}\rangle\simeq q,
\]
with comparison constants depending only on \(L_x,L_z\).
At the output \((o_q,\sigma)\), only diagnostic transport contributes
to the polarized coefficient. The horizontal transport and quadratic
averaging contributions vanish.
\end{corollary}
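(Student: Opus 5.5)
The plan is to verify the corollary by direct substitution into the formulas of Section~\ref{sec:3-quadratic-wave-interactions}, organized as follows. With \(k_1=\ell_0=(0,-2)\), \(k_2=h_q=(q,1)\) and \(k=o_q=(q,-1)\), we have \(m_1=-2\), \(m_2=1\), \(m=-1\), all nonzero, so the triple is admissible and \(o_q=\ell_0+h_q\) is immediate.

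\textbf{Resonance.} For the phase we use the cleared form \eqref{eq:3.11} with \(\mu/\sigma=1\) and \(\nu/\sigma=-1\). The right-hand side is \((n_1+n_2)m_1m_2-n_1m_2m+n_2m_1m\). Substituting \(n_1=0\) and \(n_2=q\) gives \(-2q+2q=0\), which proves \eqref{eq:4.2.2}. Alternatively, the triple is the instance of the parametrization \eqref{eq:2.2.9} in Theorem~\ref{thm:2.2.2-exact-lattice-resonances}(iii). Since \(m_1=-2m_2\), the first horizontal index vanishes. Here \(d_-=\gcd(1,0)=1\), and one takes \(\ell=q\).

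\textbf{Coefficient.} For the channel \((\sigma,\sigma,-\sigma)\), \eqref{eq:2.1.38} gives \(\widetilde q=m_{21}/(2\sqrt\gamma)\), since only the indicator \(\mathbf 1_{\{\mu=\sigma\}}\) is active. By \eqref{eq:2.1.37}, \(m_{21}=i\kappa_x(n_2m_1/m_2-n_1)=i\kappa_x(-2q)\). Hence \(\widetilde q=-i\kappa_xq/\sqrt\gamma\), which also agrees with \eqref{eq:2.2.10}: \(-\tfrac{i\kappa_x q}{2\sqrt\gamma}\cdot\tfrac{-2}{-1}\). The coefficient is nonzero, so the resonance is active. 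The frequency statements follow from \(|\xi_{o_q}|^2=\kappa_x^2q^2+\kappa_z^2=|\xi_{h_q}|^2\) together with \(\langle\xi\rangle\simeq q\) for \(q\ge1\).

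\textbf{Decomposition of the coefficient.} This is the only step requiring care. By \eqref{eq:3.1}, the contribution to the output \((o_q,\sigma)\) comes from \(\mathcal N_2(H_2,H_1)\): the advecting velocity is the \(h_q\) input and the advected wave variable is the \(\ell_0\) input in branch \(\sigma\). In the symbol \(m_b(k_2,k_1)=\lambda(k_2)i\kappa_zm_1-i\kappa_xn_1\), the second term is the horizontal transport \(-f\partial_xg\). It carries the factor \(n_1=0\) and therefore vanishes. The first term is the diagnostic transport \((\partial_z^{-1}\partial_xf)\partial_zg\), which supplies the whole coefficient.

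The averaging correction \(\partial_x\overline{fg}\) in \eqref{eq:2.1.5} only produces outputs with \(m=0\). Since \(m=-1\ne0\), it does not contribute, consistent with \eqref{eq:3.6}. The opposite ordering \(\mathcal N_2(H_1,H_2)\) carries the indicator \(\mathbf 1_{\{\nu=\sigma\}}=0\), so it contributes nothing to this channel.
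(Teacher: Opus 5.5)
Your proposal is correct and follows essentially the same route as the paper. You substitute directly into the phase and into \eqref{eq:2.1.38}, use the branch indicators to isolate the reversed order \(m_b(h_q,\ell_0)\), observe that \(n_1=0\) removes horizontal transport, and note that the averaging correction only reaches \(m=0\) outputs. The only cosmetic difference is that you verify resonance through the cleared integer form \eqref{eq:3.11}, with a cross-check against Theorem~\ref{thm:2.2.2-exact-lattice-resonances}(iii) at \(\ell=q\), whereas the paper sums the \(\lambda\)-values directly.
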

\begin{proof}
The vertical indices \(-2,1,-1\) are nonzero. Substitution into
\eqref{eq:2.1.36} gives
\[
 \frac{\Phi_{\sigma,\sigma,-\sigma}(o_q;\ell_0,h_q)}{\sigma N}
 =\lambda(o_q)-\lambda(\ell_0)+\lambda(h_q)
 =\frac{L_z}{L_x}(-q-0+q)=0.
\]
For this branch combination, \eqref{eq:2.1.38} selects the reversed
input order:
\[
 \widetilde q_{\sigma,\sigma,-\sigma}(o_q;\ell_0,h_q)
 =\frac{m_b(h_q,\ell_0)}{2\sqrt\gamma}
 =\frac{i\kappa_x}{2\sqrt\gamma}
   \left(\frac{q(-2)}{1}-0\right)
 =-\frac{i\kappa_xq}{\sqrt\gamma}.
\]
Because \(\ell_0\) has horizontal index zero, the coefficient
\(m_b(h_q,\ell_0)=\lambda(h_q)i\kappa_z(-2)\) arises solely from
diagnostic transport. The quadratic averaging correction is supported
at zero vertical output and therefore does not contribute here.
Finally,
\[
 |\xi_{o_q}|^2=|\xi_{h_q}|^2=\kappa_x^2q^2+\kappa_z^2,
\]
which proves the frequency comparisons.
\end{proof}

Fix \(s\in\mathbb R\) and \(\sigma\in\{\pm1\}\). For the
frequencies in \eqref{eq:4.2.1}, choose
\[
 A_0:=(L_xL_z)^{-\frac12}\langle\xi_{\ell_0}\rangle^{-s}
 \quad\text{and}\quad
 A_q:=(L_xL_z)^{-\frac12}\langle\xi_{h_q}\rangle^{-s},
\]
and define
\begin{equation}
\label{eq:4.2.4}
 H_0:=E_\sigma(\ell_0;A_0),
\end{equation}
and
\begin{equation}
\label{eq:4.2.5}
 G_q:=E_{-\sigma}(h_q;A_q).
\end{equation}
By \eqref{eq:4.1.6}, both inputs have unit \(X^s\) norm, and
\(H_0\) is independent of \(q\). Their branch assignments follow
from \eqref{eq:4.1.5}: the frequencies \(\ell_0\) and \(h_q\)
occupy the branches \(\sigma\) and \(-\sigma\), respectively,
with the conjugate branches at the negative frequencies.

The possible spatial output frequencies are
\[
 \bigl\{\pm(\ell_0+h_q),\,\pm(\ell_0-h_q)\bigr\}
 =\bigl\{\pm o_q,\,\pm(q,3)\bigr\}.
\]
These frequencies are pairwise distinct for \(q\ge1\). Hence only
\((\ell_0,h_q)\), with branches \((\sigma,-\sigma)\), contributes
to \((o_q,\sigma)\) in \eqref{eq:3.15}. The two output branches
contribute separately to the norm in \eqref{eq:2.1.29}, so the
other branch cannot cancel this coefficient.

By \eqref{eq:4.2.2}, the time integral in \eqref{eq:3.15} equals \(t\)
for every \(t\in\mathbb R\). Thus
\begin{equation}
\label{eq:4.2.6}
 \widehat{Z_\sigma[\mathcal B_t(H_0,G_q)]}(o_q)
 =-\frac{i\kappa_xq}{\sqrt\gamma}\,
   t e^{it\omega_\sigma(o_q)}A_0A_q.
\end{equation}
Retaining this coefficient and its conjugate at \((-o_q,-\sigma)\)
in \eqref{eq:2.1.29}, we obtain, for \(0\le\delta<1\),
\[
 \|\mathcal B_t(H_0,G_q)\|_{X^{s-\delta}}
 \ge\frac{\kappa_x|t|q}
 {\sqrt{\gamma L_xL_z}\,\langle\xi_{\ell_0}\rangle^s
                            \langle\xi_{h_q}\rangle^\delta}
 \ge c_{s,\delta}|t|q^{1-\delta}.
\]
Here \(\langle\xi_{o_q}\rangle=\langle\xi_{h_q}\rangle\), so the
weights of order \(s\) cancel for every real \(s\). The constant is
independent of \(q,t,\sigma\).

Since the vertical indices \(-2,1,-1\) are fixed, their contributions
to the weights in \eqref{eq:2.1.20} are independent of \(q\).
Renormalizing the same inputs in the tensor-product spaces therefore
establishes the following anisotropic obstruction.

\begin{corollary}
\label{cor:4.2.2-anisotropic-picard-obstruction}
Let \(s_x,s_z,r_z\in\mathbb R\), \(0\le\delta_x<1\), and
\(t\in\mathbb R\setminus\{0\}\). The operator \(\mathcal B_t\), initially
defined on \(X^\infty\times X^\infty\), has no bounded real-bilinear
extension
\begin{equation}
\label{eq:4.2.7}
 \mathcal B_t:X^{s_x,s_z}\times X^{s_x,s_z}
       \longrightarrow X^{s_x-\delta_x,r_z}.
\end{equation}
Thus no finite choice of the vertical Sobolev indices permits a loss of
less than one horizontal derivative in these spaces.
\end{corollary}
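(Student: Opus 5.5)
We argue by contradiction, reusing the mixed-branch family of Corollary~\ref{cor:4.2.1-diagnostic-mixed-resonances}. Suppose a bounded real-bilinear extension \eqref{eq:4.2.7} exists, with bound \(C_*\). On \(X^\infty\times X^\infty\) it then satisfies
\[
 \|\mathcal B_t(H,G)\|_{X^{s_x-\delta_x,r_z}}\le C_*\|H\|_{X^{s_x,s_z}}\|G\|_{X^{s_x,s_z}}.
\]
We test this inequality on the modes \(H_0=E_\sigma(\ell_0;a_0)\) and \(G_q=E_{-\sigma}(h_q;a_q)\) from \eqref{eq:4.2.4}--\eqref{eq:4.2.5}. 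The amplitudes \(a_0,a_q\) are renormalized for the tensor-product weights. By \eqref{eq:4.1.5} and \eqref{eq:2.1.32}, each single mode \(E_\rho(k;a)\) with \(k=(n,m)\) has
\[
 \|E_\rho(k;a)\|_{X^{s_x,s_z}}^2=L_xL_za^2\langle\kappa_xn\rangle^{2s_x}\langle\kappa_zm\rangle^{2s_z}.
\]
We choose \(a_0:=(L_xL_z)^{-1/2}\langle\kappa_z 2\rangle^{-s_z}\), using \(\ell_0=(0,-2)\), and
\[
 a_q:=(L_xL_z)^{-1/2}\langle\kappa_xq\rangle^{-s_x}\langle\kappa_z\rangle^{-s_z}.
\]
With this choice, both inputs have unit \(X^{s_x,s_z}\) norm, and \(H_0\) is independent of \(q\).

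The lower bound repeats the argument preceding the corollary, which did not depend on the weights. The output frequencies are \(\pm o_q\) and \(\pm(q,3)\), and these are pairwise distinct. So only the pair \((\ell_0,h_q)\) with branches \((\sigma,-\sigma)\) contributes at \((o_q,\sigma)\). By \eqref{eq:4.2.6}, that coefficient has modulus \(\kappa_xq|t|a_0a_q/\sqrt\gamma\). The output is real, so its conjugate sits at \((-o_q,-\sigma)\). We keep only these two modes in \eqref{eq:2.1.32} at the weight of \(o_q=(q,-1)\), namely \(\langle\kappa_xq\rangle^{s_x-\delta_x}\langle\kappa_z\rangle^{r_z}\). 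This gives
\[
 \|\mathcal B_t(H_0,G_q)\|_{X^{s_x-\delta_x,r_z}}\ge c\,|t|\,q\,\langle\kappa_xq\rangle^{s_x-\delta_x}\langle\kappa_xq\rangle^{-s_x}\cdot\frac{\langle\kappa_z\rangle^{r_z}}{\langle 2\kappa_z\rangle^{s_z}\langle\kappa_z\rangle^{s_z}}.
\]
The horizontal weights of order \(s_x\) cancel exactly, and the vertical factor is a positive constant independent of \(q\). Since \(\langle\kappa_xq\rangle\simeq q\), the right side is \(\gtrsim|t|q^{1-\delta_x}\).

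Because \(t\ne0\) and \(\delta_x<1\), this lower bound tends to infinity as \(q\to\infty\). That contradicts the assumed bound \(C_*\) on unit-norm inputs, so no such extension exists.

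The only step needing care is the mode isolation. Real-valuedness means each input also occupies the conjugate frequency in the opposite branch. We must check that no other input pair lands on \((o_q,\sigma)\). Distinctness of \(\pm o_q\) and \(\pm(q,3)\) settles this, since \(o_q\) has vertical index \(-1\) and \((q,3)\) has \(3\). The two output branches enter \eqref{eq:2.1.32} as separate nonnegative terms, so no cancellation occurs. Everything else is the weight bookkeeping above. The vertical indices are fixed, so arbitrary \(s_z,r_z\) only affect constants.
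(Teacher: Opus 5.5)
Your proposal is correct and follows the paper's proof essentially step for step. You use the same mixed-branch family from Corollary~\ref{cor:4.2.1-diagnostic-mixed-resonances} and the same tensor-renormalized amplitudes \(\langle2\kappa_z\rangle^{-s_z}\) and \(\langle\kappa_xq\rangle^{-s_x}\langle\kappa_z\rangle^{-s_z}\). You isolate \((o_q,\sigma)\) and its conjugate in the same way, which yields exactly the lower bound \eqref{eq:4.2.8}, of order \(|t|\,q^{1-\delta_x}\).
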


\begin{proof}
Take the frequencies \eqref{eq:4.2.1} and the inputs
\eqref{eq:4.2.4}--\eqref{eq:4.2.5}, with amplitudes
\[
 A_0:=(L_xL_z)^{-\frac12}\langle2\kappa_z\rangle^{-s_z}
 \quad\text{and}\quad
 A_q:=(L_xL_z)^{-\frac12}
       \langle\kappa_xq\rangle^{-s_x}\langle\kappa_z\rangle^{-s_z}.
\]
By \eqref{eq:4.1.5} and the tensor-product wave-energy
identity \eqref{eq:2.1.32},
\[
 \|H_0\|_{X^{s_x,s_z}}=\|G_q\|_{X^{s_x,s_z}}=1.
\]
The supports and branch assignments remain unchanged, so
\eqref{eq:4.2.6} holds with these amplitudes. Retaining the selected
coefficient and its conjugate in the tensor-product norm, we obtain
\[
 \|\mathcal B_t(H_0,G_q)\|_{X^{s_x-\delta_x,r_z}}
 \ge\sqrt{L_xL_z}\langle\kappa_xq\rangle^{s_x-\delta_x}
                 \langle\kappa_z\rangle^{r_z}
 \left|\widehat{Z_\sigma[\mathcal B_t(H_0,G_q)]}(o_q)\right|.
\]
Inserting \eqref{eq:4.2.6}, we find
\begin{equation}
\label{eq:4.2.8}
 \|\mathcal B_t(H_0,G_q)\|_{X^{s_x-\delta_x,r_z}}
 \ge\frac{\kappa_x|t|}{\sqrt{\gamma L_xL_z}}
       \frac{\langle\kappa_z\rangle^{r_z-s_z}}
            {\langle2\kappa_z\rangle^{s_z}}
       q\langle\kappa_xq\rangle^{-\delta_x}.
\end{equation}
The horizontal weights of order \(s_x\) cancel, while the vertical
factor is a fixed positive constant for every \(s_z,r_z\in\mathbb R\).
Since \(q\langle\kappa_xq\rangle^{-\delta_x}\simeq q^{1-\delta_x}
\to\infty\), the unit-norm smooth inputs contradict any bounded
extension in \eqref{eq:4.2.7}.
\end{proof}

For the dyadic bounds, we return to the same-branch family
\eqref{eq:4.1.1}. Superposing its high-frequency inputs within a single
physical annulus will produce a matching lower bound on every
sufficiently high dyadic shell.

Choose \(\varphi\in C_c^\infty((\frac{1}{2},2))\) with
\(0\le\varphi\le1\) and \(\varphi\equiv1\) on
\([\frac78,\frac98]\). Let \(\chi:[0,\infty)\to[0,1]\) be smooth,
supported in \([0,2)\), and equal to one on \([0,1]\).
For \(j\in\mathbb Z\), define the Fourier localizations
\begin{equation}
\label{eq:4.2.9}
 \widehat{\mathcal P_jf}(k):=\varphi\bigl(2^{-j}|\xi_k|\bigr)\widehat f(k),
\end{equation}
and
\begin{equation}
\label{eq:4.2.10}
 \widehat{\mathcal P_{\le j}f}(k):=\chi\bigl(2^{-j}|\xi_k|\bigr)\widehat f(k),
\end{equation}
and apply them componentwise. These operators preserve real-valuedness
and the mean constraints, and commute with both the wave transformation
and the linear group. For each \(s\in\mathbb R\), take the normalized
low-frequency input \(H\) from \eqref{eq:4.1.9}, with amplitude
\eqref{eq:4.1.7}. Fix an integer \(j_0\) such that
\begin{equation}
\label{eq:4.2.11}
        \mathcal P_{\le j_0}H=H,
\end{equation}
which holds whenever \(2^{j_0}\ge|\xi_{k_1}|\). Thus \(j_0\)
can be fixed independently of \(s\).

\begin{proposition}
\label{prop:4.2.1-dyadic-picard-bounds}
Let \(s\in\mathbb R\) and \(0\le\delta\le1\), and take \(H\)
and \(j_0\) as above. There exist \(j_*\in\mathbb N\) and real-valued
trigonometric polynomials \(G_j\in X^\infty\), \(j\ge j_*\), such that
\begin{equation}\label{eq:4.2.12}
 \mathcal P_jG_j=G_j,
\end{equation}
and
\begin{equation}\label{eq:4.2.13}
 \|H\|_{X^s}=\|G_j\|_{X^s}=1,
\end{equation}
and, for every \(t\in\mathbb R\setminus\{0\}\),
\begin{equation}
\label{eq:4.2.14}
 \bigl\|\mathcal P_j\mathcal B_t
       \bigl(\mathcal P_{\le j_0}H,\mathcal P_jG_j\bigr)\bigr\|_{X^{s-\delta}}
 \ge c|t|\,2^{j(1-\delta)}.
\end{equation}
Here \(c>0\) is independent of \(j\) and \(t\). Each \(G_j\) can be
chosen as a normalized sum of \(J_j\simeq2^{\frac{j}{2}}\)
high-frequency inputs forming exact same-branch resonances with the
common low input \(H\). Their frequencies and the target output
frequencies are pairwise distinct and lie in the same physical annulus.

If in addition \(s\ge2\), consider the localized bilinear map
\begin{equation}
 (H_1,H_2)\longmapsto\mathcal P_j\mathcal B_t
   \bigl(\mathcal P_{\le j_0}H_1,\mathcal P_jH_2\bigr).
\end{equation}
For all \(j\ge j_*\) and \(t\ne0\), its norm satisfies
\begin{equation}
\label{eq:4.2.16}
 c|t|\,2^{j(1-\delta)}
 \le
 \bigl\|\mathcal P_j\mathcal B_t(\mathcal P_{\le j_0}\,\cdot,\mathcal P_j\,\cdot)\bigr\|_{X^s\times X^s\to X^{s-\delta}}
 \le
 C_s|t|\,2^{j(1-\delta)}.
\end{equation}
The constants may depend on \(s,\delta,L_x,L_z,g,\beta,\gamma\) and
the fixed cutoffs in \eqref{eq:4.2.9}--\eqref{eq:4.2.10}, but not
on \(j\) or \(t\). This includes the endpoint \(\delta=1\).
\end{proposition}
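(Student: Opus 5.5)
The plan is to prove the lower bound by superposing the same-branch family of Corollary~\ref{cor:4.1.1-active-high-low-resonances} over a whole window of \(M\) inside one annulus. The upper bound will come from a direct Fourier-side multiplier estimate exploiting that one input is at bounded frequency. Write \(k_2(M)=(-M^2,M)\) and \(k(M)=(1-M^2,M+1)\). Since \(|\xi_{k_2(M)}|^2=\kappa_x^2M^4+\kappa_z^2M^2\) and \(|\xi_{k(M)}|=|\xi_{k_2(M)}|+O(M)\), both lie in the plateau \([\frac78,\frac98]2^j\) of \(\varphi(2^{-j}\cdot)\) whenever \(\kappa_xM^2\in[0.9\cdot2^j,1.1\cdot2^j]\) and \(j\) is large. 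Let \(\mathcal M_j\) be the set of integers \(M\ge2\) in this window. Then \(J_j:=\#\mathcal M_j\simeq2^{j/2}\), and \(j_*\) is chosen so that \(\mathcal M_j\neq\varnothing\) and the \(O(M)\) error keeps every \(k(M)\), \(M\in\mathcal M_j\), in the plateau.

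Define \(G_j:=J_j^{-1/2}\sum_{M\in\mathcal M_j}E_+(k_2(M);A_M)\), with \(A_M\) as in \eqref{eq:4.1.8}. The frequencies \(\pm k_2(M)\) are pairwise distinct, so \eqref{eq:4.1.6} and orthogonality give \(\|G_j\|_{X^s}=1\); moreover \(\mathcal P_jG_j=G_j\), and \(\mathcal P_{\le j_0}H=H\) by \eqref{eq:4.2.11}. The bilinear output of \(\mathcal B_t(H,G_j)\) is supported on \(\pm k(M')\) and \(\pm(k_1-k_2(M'))=\pm(1+M'^2,1-M')\). The next step is to check that \(k(M)\) coincides with none of these except \(k(M)\) itself, arising from the pair \((k_1,k_2(M))\). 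Comparing signs of horizontal indices and the vertical indices \(M'+1\), \(1-M'\), \(-(M'+1)\), \(M'-1\) with \(M+1\ge3\) settles this. As in the proof of Theorem~\ref{thm:2.2.1-second-picard-obstruction}, only the \((+,+)\) channel reaches \((k(M),+)\), since \(\widetilde q_{-++}=0\).

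The Fourier--Duhamel formula \eqref{eq:3.15} then gives the coefficient \(te^{it\omega_+(k(M))}\widetilde q_{+++}A_1A_MJ_j^{-1/2}\) at \((k(M),+)\), and \(\mathcal P_j\) acts as the identity there. Retaining these modes and their conjugates in \eqref{eq:2.1.29}, and using \eqref{eq:4.1.3} with \(M^2\simeq2^j\), yields
\[
\|\mathcal P_j\mathcal B_t(H,G_j)\|_{X^{s-\delta}}^2\gtrsim t^2\sum_{M\in\mathcal M_j}J_j^{-1}M^4\langle\xi_{k(M)}\rangle^{2(s-\delta)}\langle\xi_{k_2(M)}\rangle^{-2s}\gtrsim t^2\,2^{2j(1-\delta)}.
\]
The weight ratios are handled exactly as in \eqref{eq:4.1.13}, with constants valid for every real \(s\) and every \(0\le\delta\le1\). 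This proves \eqref{eq:4.2.14} and the lower half of \eqref{eq:4.2.16}.

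For the upper half of \eqref{eq:4.2.16}, I work directly with \eqref{eq:3.15} and do not use Lemma~\ref{lem:3.3-fourier-summation}. The first input has \(|\xi_{k_1}|\le2^{j_0+1}\), while the second and the output satisfy \(\langle\xi_{k_2}\rangle\simeq\langle\xi_k\rangle\simeq2^j\). From \eqref{eq:2.1.37}--\eqref{eq:2.1.38},
\[
|m_{12}|\le\kappa_x\Bigl(\frac{|n_1||m_2|}{|m_1|}+|n_2|\Bigr)\lesssim2^j,
\qquad
|m_{21}|\lesssim2^j,
\]
with constants depending on \(j_0\). Hence \(|\widetilde q|\lesssim2^j\), and the time integral is at most \(|t|\). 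Young's inequality, with the low input in \(\ell^1\) controlled by \(C\|H_1\|_{X^s}\) (finitely many frequencies, any \(s\)) and the high input in \(\ell^2\), then gives the bound \(C_s|t|\,2^{j}\,2^{j(s-\delta)}2^{-js}=C_s|t|2^{j(1-\delta)}\). The main obstacle is the bookkeeping in the first step: the window for \(M\) must sit inside the plateau of \(\varphi\) for both inputs and outputs, and the distinctness check must exclude every cross term \(k_1\pm k_2(M')\) and every conjugate contribution. Once that is done, the rest follows from the estimates above.
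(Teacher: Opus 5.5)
Your lower bound is the paper's construction. It uses the same superposition \(G_j=J_j^{-1/2}\sum_{M\in\mathcal M_j}G_M\), a window for \(\kappa_xM^2\) strictly inside the plateau of \(\varphi\) (the paper uses \([\frac{15}{16},\frac{17}{16}]\,2^j\)), orthogonality for the normalization, isolation of the target modes \((k(M),+)\) and \((-k(M),-)\), and two-sided weight comparisons valid for every real \(s\).

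One correction to the bookkeeping: signs do not exclude \(k(M)=-h(M')\). Indeed \(-h(M')=(-M'^2-1,M'-1)\) also has negative horizontal index, and its vertical index equals \(M+1\) when \(M'=M+2\). You need the horizontal magnitudes, \(M^2-1\ne(M+2)^2+1\), which is the check the paper performs. Also, \(|\xi_{k(M)}|-|\xi_{k_2(M)}|\) is \(O(1)\), not merely \(O(M)\), since \(k(M)-k_2(M)=k_1\).

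For the upper bound you take a genuinely different route. The paper composes the global estimate \eqref{eq:2.2.3} with \(\|\mathcal P_jF\|_{X^{s-\delta}}\le C2^{j(1-\delta)}\|F\|_{X^{s-1}}\) and the contractivity of both cutoffs. This takes two lines and gives a constant independent of \(j_0\), but it inherits \(s\ge2\) from the endpoint product estimates in Proposition~\ref{prop:3.1-quadratic-endpoint-bounds}.

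You instead bound the high--low polarized multiplier pointwise by \(C2^{j_0}2^j\), bound the time integral by \(|t|\), and apply Young's inequality with the low input in \(\ell^1\) and the high input in \(\ell^2\). This is correct and self-contained, and it actually proves more than required. Since \(\mathcal P_{\le j_0}H_1\) and \(\mathcal P_jH_2\) are trigonometric polynomials for arbitrary \(H_1,H_2\in X^s\), formula \eqref{eq:3.15} applies with no extension argument. Every weight comparison you use is two-sided, either on a fixed annulus or on finitely many low frequencies. So your argument gives the two-sided bound \eqref{eq:4.2.16} for every \(s\in\mathbb R\), not only \(s\ge2\).

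The price is a constant depending on \(j_0\), which is harmless since \(j_0\) is fixed by \(L_x,L_z\). Your estimate is also tied to the high--low localization, whereas the paper's route presents the dyadic upper bound as a direct corollary of the general one-derivative loss.
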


\begin{proof}
Write \(k_2(M)\) and \(k(M)\) for the high input and output
frequencies in \eqref{eq:4.1.1}. For each \(j\), set
\begin{equation}
\label{eq:4.2.17}
 \mathcal M_j:=\left\{M\in\mathbb N:M\ge2,\quad
 \frac{15}{16}2^j\le\kappa_xM^2\le\frac{17}{16}2^j\right\}.
\end{equation}
For large \(j\), the defining interval has endpoints
\(\sqrt{\frac{15}{16\kappa_x}}\,2^{\frac j2}\) and
\(\sqrt{\frac{17}{16\kappa_x}}\,2^{\frac j2}\). Its length is
comparable to \(2^{\frac j2}\), so
\(J_j:=|\mathcal M_j|\simeq2^{\frac j2}\).

The physical frequencies satisfy
\begin{equation}
\label{eq:4.2.18}
 0\le |\xi_{k_2(M)}|-\kappa_xM^2
 \le\frac{\kappa_z^2}{2\kappa_x},
\end{equation}
and
\begin{equation}
\label{eq:4.2.19}
 \bigl||\xi_{k(M)}|-\kappa_xM^2\bigr|
 \le \bigl|\xi_{k(M)}-(-\kappa_xM^2,0)\bigr|
 \le C(1+M).
\end{equation}
On \(\mathcal M_j\), we have \(M\simeq2^{\frac j2}\), so the
errors in \eqref{eq:4.2.18}--\eqref{eq:4.2.19} are \(o(2^j)\).
The fixed margins in \eqref{eq:4.2.17} therefore allow us to choose
\(j_*\) such that
\begin{equation}
\label{eq:4.2.20}
 \frac{7}{8}\,2^j
 \le |\xi_{k_2(M)}|,|\xi_{k(M)}|
 \le\frac{9}{8}\,2^j,
 \qquad M\in\mathcal M_j,
 \qquad j\ge j_*.
\end{equation}
Thus the symbol of \(\mathcal P_j\) equals one at all high inputs and
target outputs.

For each \(M\in\mathcal M_j\), take \(G_M\) from
\eqref{eq:4.1.10}, with amplitude \(A_M\) as in
\eqref{eq:4.1.8}, and define
\begin{equation}
        G_j:=J_j^{-\frac{1}{2}}\sum_{M\in\mathcal M_j}G_M.
\end{equation}
The summands have disjoint Fourier supports. Orthogonality in \(X^s\)
and \eqref{eq:4.1.11} give
\[
        \|G_j\|_{X^s}^2
        =J_j^{-1}\sum_{M\in\mathcal M_j}\|G_M\|_{X^s}^2
        =1.
\]
By \eqref{eq:4.2.20}, we also have \(\mathcal P_jG_j=G_j\).
Together with \eqref{eq:4.1.11}, this proves
\eqref{eq:4.2.12}--\eqref{eq:4.2.13}.

To separate the target coefficients, we check that no other output
coincides with \(k(M)\). Besides \(\pm k(M)\), the possible outputs are
\[
 h(M):=k_1-k_2(M)=(M^2+1,1-M)
 \quad\text{and}\quad -h(M).
\]
The frequencies \(k(M)\) are pairwise distinct and have positive
vertical components, so none equals \(-k(M')\). Their vertical
components also exclude \(k(M)=h(M')\). If \(k(M)=-h(M')\),
then the vertical components give \(M'=M+2\), whereas the horizontal
components require \((M+2)^2-M^2=-2\), a contradiction.
Reflection excludes the remaining collisions. Thus only
\((k_1,k_2(M))\) contributes to \((k(M),+)\), with both input
orders included by polarization. The coefficient at \((-k(M),-)\)
is its complex conjugate.

The target modes \((k(M),+)\) and \((-k(M),-)\) are mutually
orthogonal and fixed by \(\mathcal P_j\). Linearity in the second
input shows that the absolute value of each coefficient is
\(J_j^{-\frac12}\) times the value in \eqref{eq:4.1.12}.
The wave-energy identity \eqref{eq:2.1.29} therefore implies
\begin{equation}
 \bigl\|\mathcal P_j\mathcal B_t(H,G_j)\bigr\|_{X^{s-\delta}}^2
 \ge\frac{\kappa_x^2t^2}{4\gamma L_xL_z}\,
   \frac{1}{J_j}
   \sum_{M\in\mathcal M_j}
   (M^2-1)^2
   \frac{\langle\xi_{k(M)}\rangle^{2(s-\delta)}}
   {\langle\xi_{k_1}\rangle^{2s}
    \langle\xi_{k_2(M)}\rangle^{2s}}.
\label{eq:4.2.22}
\end{equation}
By \eqref{eq:4.2.17} and \eqref{eq:4.2.20},
\(M^2\simeq\langle\xi_{k_2(M)}\rangle
\simeq\langle\xi_{k(M)}\rangle\simeq2^j\), uniformly on
\(\mathcal M_j\). These two-sided comparisons remain valid for every real Sobolev
index. Each summand in \eqref{eq:4.2.22} is bounded below by
\(c\,2^{2j(1-\delta)}\), so summation over \(\mathcal M_j\) proves
\[
 \bigl\|\mathcal P_j\mathcal B_t(H,G_j)\bigr\|_{X^{s-\delta}}
 \ge c|t|2^{j(1-\delta)}.
\]
Together with \eqref{eq:4.2.11} and \(\mathcal P_jG_j=G_j\), this proves
\eqref{eq:4.2.14} and the lower bound in \eqref{eq:4.2.16}.

For the upper bound, assume \(s\ge2\). For \(j\ge j_*\), the symbol
of \(\mathcal P_j\) is supported where
\(\langle\xi_k\rangle^{1-\delta}\le C2^{j(1-\delta)}\). Thus, for
\(F\in X^{s-1}\),
\begin{equation}
\label{eq:4.2.23}
        \|\mathcal P_jF\|_{X^{s-\delta}}
        \le C2^{j(1-\delta)}\|F\|_{X^{s-1}}.
\end{equation}
Both cutoffs have operator norm at most one on every \(X^r\).
Combining \eqref{eq:4.2.23} with the one-derivative estimate
\eqref{eq:2.2.3}, we conclude that
\begin{align*}
 \bigl\|\mathcal P_j\mathcal B_t(\mathcal P_{\le j_0}H_1,\mathcal P_jH_2)\bigr\|_{X^{s-\delta}}
 \le{}& C2^{j(1-\delta)}
       \|\mathcal B_t(\mathcal P_{\le j_0}H_1,\mathcal P_jH_2)\|_{X^{s-1}}\\*
 \le{}& C_s|t|2^{j(1-\delta)}
       \|H_1\|_{X^s}\|H_2\|_{X^s}.
\end{align*}
This proves the upper bound in \eqref{eq:4.2.16}.
\end{proof}



\section{Consequences for the evolution problem}
\label{sec:5-iteration-obstructions-fourier-rays}

We derive consequences of the Picard obstruction for quadratic
iteration, trajectory-map differentiability, and quadratic changes of
variables. On transparent Fourier rays, the nonlinear terms vanish.
We use this cancellation to construct explicit global solutions and
characterize uniform strict stratification.

\subsection{Iteration and trajectory maps}

The first obstruction concerns trajectory-space estimates and does not
require a nonlinear solution map. For smooth trajectories \(u,v\), define
\begin{equation}
\label{eq:5.1.1}
 \mathfrak B(u,v)(t)
 :=\int_0^te^{(t-\tau)\mathcal L}
 \Bigl[
 \mathcal N_2\bigl(u(\tau),v(\tau)\bigr)
 +\mathcal N_2\bigl(v(\tau),u(\tau)\bigr)
 \Bigr] \,\mathrm d\tau.
\end{equation}
On free trajectories, this operator agrees with the second Picard
operator:
\[
\mathfrak B(e^{\cdot\mathcal L}H_1,e^{\cdot\mathcal L}H_2)(t) =\mathcal B_t(H_1,H_2).
\]
The fixed-time obstruction in
Theorem~\ref{thm:2.2.1-second-picard-obstruction} therefore excludes the
following combination of trajectory-space bounds.

\begin{theorem}
\label{thm:5.1.1-quadratic-iteration-obstruction}
Let \(T>0\), \(s\in\mathbb R\), and \(0\le\delta<1\). No real Banach
space \(Y_T\) of trajectories satisfies the following conditions with
constants \(C_L,C_B>0\):
\begin{enumerate}[(i)]
\item \(Y_T\) is continuously embedded in
      \(C([0,T];X^{s-\delta})\);
\item for every \(H\in X^\infty\), the free trajectory
      \(e^{\cdot\mathcal L}H\) belongs to \(Y_T\), and
      \begin{equation}
      \label{eq:5.1.2}
      \|e^{\cdot\mathcal L}H\|_{Y_T}
      \le C_L\|H\|_{X^s};
      \end{equation}
\item the map \(\mathfrak B\), defined by \eqref{eq:5.1.1} on pairs of
      free trajectories with initial data in \(X^\infty\), extends to a
      bounded real-bilinear map from \(Y_T\times Y_T\) to \(Y_T\), with
      \begin{equation}
      \label{eq:5.1.3}
      \|\mathfrak B(u,v)\|_{Y_T}
      \le C_B\|u\|_{Y_T}\|v\|_{Y_T}.
      \end{equation}
\end{enumerate}
In particular, neither an ordered bilinear Duhamel estimate nor a
quadratic estimate for the self-interaction Duhamel map can close the
usual Picard scheme in such a space.
\end{theorem}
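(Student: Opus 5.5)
We argue by contradiction. Suppose such a space \(Y_T\) exists, with constants \(C_L,C_B\), and let \(C_E\) be the norm of the embedding in (i). Fix a time \(t\in(0,T]\); for definiteness take \(t=T\). Apply Theorem~\ref{thm:2.2.1-second-picard-obstruction}\,(i) at this \(t\), with the given \(s\) and \(\delta\). It provides real trigonometric polynomials \(H,G_M\in X^\infty\) with
\[
\|H\|_{X^s}+\sup_{M\ge2}\|G_M\|_{X^s}\le C
\]
and
\[
\|\mathcal B_T(H,G_M)\|_{X^{s-\delta}}\ge cTM^{2(1-\delta)}
\]
for all large \(M\).

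The chain of estimates runs in three steps.

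\emph{Step 1: the free trajectories lie in \(Y_T\).} Set \(u:=e^{\cdot\mathcal L}H\) and \(v_M:=e^{\cdot\mathcal L}G_M\). By (ii),
\[
\|u\|_{Y_T}\le C_LC \quad\text{and}\quad \|v_M\|_{Y_T}\le C_LC .
\]

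\emph{Step 2: bound \(\mathfrak B\) in \(Y_T\).} On these free trajectories with smooth data, \(\mathfrak B(u,v_M)\) is given by \eqref{eq:5.1.1}. The extension in (iii) agrees with this formula on free trajectories with \(X^\infty\) data, so no density argument is needed. Hence
\[
\|\mathfrak B(u,v_M)\|_{Y_T}\le C_BC_L^2C^2 .
\]

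\emph{Step 3: evaluate at time \(T\).} By (i), evaluation at \(T\) is continuous from \(Y_T\) into \(X^{s-\delta}\). Using \(\mathfrak B(u,v_M)(T)=\mathcal B_T(H,G_M)\), we get
\[
cTM^{2(1-\delta)}\le\|\mathcal B_T(H,G_M)\|_{X^{s-\delta}}\le C_E\|\mathfrak B(u,v_M)\|_{Y_T}\le C_EC_BC_L^2C^2 .
\]
Since \(\delta<1\), the left-hand side tends to infinity as \(M\to\infty\) while the right-hand side is fixed. This is a contradiction, so no such \(Y_T\) exists.

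For the concluding remark, first consider an ordered bilinear Duhamel estimate, i.e.\ a bound on \(u,v\mapsto\int_0^t e^{(t-\tau)\mathcal L}\mathcal N_2(u,v)\,\mathrm d\tau\) in \(Y_T\). Such a bound yields (iii) immediately: \(\mathfrak B\) is the sum of this map and its transpose.

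Next consider a quadratic estimate for the self-interaction map \(u\mapsto\int_0^te^{(t-\tau)\mathcal L}\mathcal N_2(u,u)\,\mathrm d\tau\). We use the polarization identity
\[
\mathfrak B(u,v)=\mathcal D(u+v)-\mathcal D(u)-\mathcal D(v),
\]
where \(\mathcal D\) denotes the self-interaction map. This gives a bound of \(\|\mathfrak B(u,v)\|_{Y_T}\) by a constant times \((\|u\|_{Y_T}+\|v\|_{Y_T})^2\). We only need it on free trajectories, which form a linear set, so polarization is legitimate there. Bilinear rescaling \(u\mapsto\lambda u\), \(v\mapsto\lambda^{-1}v\) then converts this into the product bound \(C\|u\|\,\|v\|\). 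The argument above only ever uses (iii) on free trajectories, so the same contradiction applies to both formulations.

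The only point needing care is the compatibility of the extension in (iii) with the concrete formula at fixed time. This is immediate here because our inputs are trigonometric polynomials, for which \eqref{eq:5.1.1} is literally the definition.
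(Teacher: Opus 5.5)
Your proof is correct and follows the paper's argument. You chain (ii), (iii) and the embedding (i) at a fixed positive time to bound $\mathcal B_t$ into $X^{s-\delta}$ on smooth data, contradicting Theorem~\ref{thm:2.2.1-second-picard-obstruction}, and you reduce the ordered and self-interaction estimates to (iii) by symmetrization and rescaled real polarization on free trajectories. The differences are cosmetic: you plug in the explicit family $H,G_M$ with the lower bound \eqref{eq:2.2.2} instead of extending $\mathcal B_{t_0}$ by density, and you polarize via $\mathcal D(u+v)-\mathcal D(u)-\mathcal D(v)$ plus rescaling rather than the paper's $\frac14$-identity.
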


\begin{proof}
Assume that such a space exists. Denote by \(C_E\) the embedding
constant in \emph{(i)}, and fix \(t_0\in(0,T]\). For
\(H_1,H_2\in X^\infty\),
\eqref{eq:5.1.2} and \eqref{eq:5.1.3} imply
\begin{align*}
 \|\mathcal B_{t_0}(H_1,H_2)\|_{X^{s-\delta}}
 \le{}&C_E\|\mathfrak B(e^{\cdot\mathcal L}H_1,
                              e^{\cdot\mathcal L}H_2)\|_{Y_T}\\*
 \le{}&C_EC_BC_L^2\|H_1\|_{X^s}\|H_2\|_{X^s}.
\end{align*}
By density, this estimate extends \(\mathcal B_{t_0}\) to a bounded
bilinear map of the type excluded by
Theorem~\ref{thm:2.2.1-second-picard-obstruction}.

Summing the two input orders converts an ordered bilinear Duhamel
bound into the polarized estimate. For the self-interaction map,
\[
 \frac12\mathfrak B(u,u)(t)
 =\int_0^te^{(t-\tau)\mathcal L}
       \mathcal N_2\bigl(u(\tau),u(\tau)\bigr)\,\mathrm d\tau.
\]
Suppose that
\(\frac12\|\mathfrak B(u,u)\|_{Y_T}\le C\|u\|_{Y_T}^2\).
For nonzero smooth free trajectories \(u,v\), set
\(a:=\bigl(\frac{\|v\|_{Y_T}}{\|u\|_{Y_T}}\bigr)^{\frac12}\).
The scaled real polarization identity reads
\[
 \mathfrak B(u,v)
 =\frac14\mathfrak B(au+a^{-1}v,au+a^{-1}v)-\frac14\mathfrak B(au-a^{-1}v,au-a^{-1}v).
\]
Since \(\|au\pm a^{-1}v\|_{Y_T}
\le2\bigl(\|u\|_{Y_T}\|v\|_{Y_T}\bigr)^{\frac12}\), it follows that
\[
        \|\mathfrak B(u,v)\|_{Y_T}
        \le4C\|u\|_{Y_T}\|v\|_{Y_T}.
\]
This recovers the same contradiction on free trajectories and excludes
both unpolarized estimates.
\end{proof}

The obstruction extends to tensor-product spaces. In
Theorem~\ref{thm:5.1.1-quadratic-iteration-obstruction}, replace the target
in \emph{(i)} by \(C([0,T];X^{s_x-\delta_x,r_z})\) and the data norm in
\emph{(ii)} by \(\|H\|_{X^{s_x,s_z}}\), where
\(s_x,s_z,r_z\in\mathbb R\) and \(0\le\delta_x<1\).
Together with \emph{(iii)}, these conditions would imply, for
\(H_1,H_2\in X^\infty\) and \(t_0\in(0,T]\),
\[
 \|\mathcal B_{t_0}(H_1,H_2)\|_{X^{s_x-\delta_x,r_z}}
 \le C_EC_BC_L^2\|H_1\|_{X^{s_x,s_z}}\|H_2\|_{X^{s_x,s_z}},
\]
contrary to Corollary~\ref{cor:4.2.2-anisotropic-picard-obstruction}.
Symmetrization and scaled real polarization again exclude the ordered
and self-interaction estimates.

We next impose existence and Duhamel-consistency assumptions to relate
the Picard obstruction to trajectory-map differentiability. For
\(s>2\) and \(0<\kappa<\gamma\), the set \(X_\kappa^s\) contains a ball
about the origin. By \eqref{eq:4.1.11}, the test data
\(\varepsilon_1H+\varepsilon_2G_M\) belong to this ball and to any
prescribed open neighborhood of zero in \(X^s\), uniformly in \(M\),
when \(|\varepsilon_1|+|\varepsilon_2|\) is sufficiently small.

\begin{corollary}
\label{cor:5.1.1-conditional-non-c2}
Let \(s>2\), \(0\le\delta<1\), \(T>0\), and
\(\kappa\in(0,\gamma)\). Suppose that \(\mathcal O\) is an open
neighborhood of the origin in \(X^s\), with
\(\mathcal O\subset X_\kappa^s\), and that a trajectory map
\[
        \mathcal S:\mathcal O
        \longrightarrow C([0,T];X^{s-\delta})
\]
satisfies the following properties:
\begin{enumerate}[(i)]
\item \(\mathcal S(U_0)(0)=U_0\) for every \(U_0\in\mathcal O\);
\item for every smooth \(U_0\in\mathcal O\), the trajectory
      \(\mathcal S(U_0)\) agrees with a classical solution of
      \eqref{eq:2.1.9};
\item for every such \(U_0\), the trajectory satisfies the Duhamel identity
      associated with \eqref{eq:2.1.11}.
\end{enumerate}
Then \(\mathcal S\) is not twice Fr\'echet differentiable at the origin
as a map between the real Banach spaces \(X^s\) and
\(C([0,T];X^{s-\delta})\). If it were twice differentiable, its second
differential would satisfy
\begin{equation}
\label{eq:5.1.4}
 D^2\mathcal S(0)[H_1,H_2](t)
 =\mathcal B_t(H_1,H_2)
\end{equation}
for every \(t\in(0,T]\) and every \(H_1,H_2\in X^\infty\), contrary to
Theorem~\ref{thm:2.2.1-second-picard-obstruction}.
\end{corollary}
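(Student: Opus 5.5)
We argue by contradiction. Suppose \(\mathcal S\) is twice Fr\'echet differentiable at \(0\) from \(X^s\) into \(C([0,T];X^{s-\delta})\). Our first aim is to identify \(D\mathcal S(0)\) and \(D^2\mathcal S(0)\) on smooth directions. Fix \(H_1,H_2\in X^\infty\) and consider the two-parameter family \(U_0=\varepsilon_1H_1+\varepsilon_2H_2\). For \(|\varepsilon_1|+|\varepsilon_2|\) small these data lie in \(\mathcal O\), and they are smooth. By (ii)--(iii), \(U^\varepsilon:=\mathcal S(U_0)\) then satisfies
\[
 U^\varepsilon(t)=e^{t\mathcal L}U_0+\int_0^te^{(t-\tau)\mathcal L}\bigl[\mathcal N_2(U^\varepsilon,U^\varepsilon)+\mathcal N_3(U^\varepsilon,U^\varepsilon,U^\varepsilon)\bigr](\tau)\,\mathrm d\tau .
\]
Twice differentiability gives the expansion \(U^\varepsilon=\varepsilon_1A_1+\varepsilon_2A_2+\tfrac12 D^2\mathcal S(0)[U_0,U_0]+o(|\varepsilon|^2)\) in \(C([0,T];X^{s-\delta})\), where \(A_j:=D\mathcal S(0)H_j\). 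In particular \(\sup_t\|U^\varepsilon(t)\|_{X^{s-\delta}}=O(|\varepsilon|)\).

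The main obstacle is that we only control \(U^\varepsilon\) in \(X^{s-\delta}\), while \(\mathcal N_2\) loses a derivative. So we cannot simply insert the expansion into the Duhamel integral and use Proposition~\ref{prop:3.1-quadratic-endpoint-bounds} in \(X^{s-\delta}\). We plan to work in a weak topology instead. Pair the Duhamel identity with a fixed trigonometric polynomial \(\Psi\), or equivalently look at single Fourier coefficients. Each Fourier coefficient of \(\mathcal N_2(U,U)\) is a convolution sum with polynomially growing multiplier \(m_{12}\) and nonlinear dependence on \(U\). We would first prove that the smooth classical solutions are bounded in a high norm uniformly in small \(\varepsilon\). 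For example, (ii) together with the energy identity \eqref{eq:2.1.10}, combined with standard energy estimates for smooth solutions on \([0,T]\), gives a bound in \(X^{r}\), \(r\) large, of order \(O(|\varepsilon|)\).

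If such a uniform high-norm bound is not available, we fall back on interpolation. The \(C([0,T];X^{s-\delta})\) control with \(s-\delta>1\) already makes \(\widehat U\in\ell^1\) with weights. Then each fixed Fourier coefficient of \(\mathcal N_2(U,U)\) is a continuous quadratic function of \(U\in X^{s-\delta}\), since \(\sum|m_{12}||\widehat U(k_1)||\widehat U(k-k_1)|\) is bounded using \(|m_{12}|\lesssim\langle\xi_{k_1}\rangle+\langle\xi_{k_2}\rangle\) with fixed \(k\). This is the key point. By the same reasoning, each fixed coefficient of \(\mathcal N_3\) is \(O(|\varepsilon|^3)\).

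Granting this coefficientwise continuity, we match orders in \(\varepsilon\) mode by mode. At first order we get \(A_j(t)=e^{t\mathcal L}H_j\). At second order, the mixed \(\varepsilon_1\varepsilon_2\) coefficient gives, for every \(k\) and every \(t\in(0,T]\),
\[
\widehat{D^2\mathcal S(0)[H_1,H_2](t)}(k)=\widehat{\mathcal B_t(H_1,H_2)}(k).
\]
Here we use \eqref{eq:2.1.12} and the fact that the cubic term contributes only \(O(|\varepsilon|^3)\). Since both sides are distributions determined by their Fourier coefficients, this proves \eqref{eq:5.1.4}.

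Finally, \(D^2\mathcal S(0)\) is a bounded bilinear map from \(X^s\times X^s\) into \(C([0,T];X^{s-\delta})\). Evaluating at a fixed \(t\in(0,T]\) therefore yields a bounded bilinear extension of \(\mathcal B_t\) from \(X^s\times X^s\) into \(X^{s-\delta}\). This contradicts Theorem~\ref{thm:2.2.1-second-picard-obstruction}(i). Concretely, take \(H_1=H\) and \(H_2=G_M\), which stay bounded in \(X^s\) while \(\|\mathcal B_t(H,G_M)\|_{X^{s-\delta}}\gtrsim M^{2(1-\delta)}\to\infty\).
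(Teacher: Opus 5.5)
Your fallback argument is correct and is essentially the paper's proof. The paper also passes to a weak topology in which $\mathcal N_2$ and $\mathcal N_3$ are continuous on $X^{r}$ with $r=s-\delta>1$. There that topology is $C([0,T];X^{-2})$, obtained from $\|b(f,g)\|_{L^1}\lesssim\|f\|_{H^1}\|g\|_{H^1}$ and $L^1\hookrightarrow H^{-2}$. This is a uniform-in-$k$ version of your fixed-mode bound: for a fixed output $k$ one has $n_1m_2-n_2m_1=n_1m-nm_1$, so $|m_{12}|\le C_k\langle\xi_{k_1}\rangle$. Your matching of the $\varepsilon_1\varepsilon_2$ coefficient in the two-parameter family is equivalent to the paper's limit of $(U^{12}_\varepsilon-U^1_\varepsilon-U^2_\varepsilon)/\varepsilon^2$.

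Two points need fixing.

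First, you write the Taylor expansion of $U^\varepsilon$ without a constant term. From it you deduce $\sup_t\|U^\varepsilon(t)\|_{X^{s-\delta}}=O(|\varepsilon|)$, and both the first-order identification $A_j(t)=e^{t\mathcal L}H_j$ and the $O(|\varepsilon|^3)$ bounds on the nonlinear terms rest on this. All of it presupposes $\mathcal S(0)=0$, which is not a hypothesis and must be derived. By (i)--(ii), $\mathcal S(0)$ agrees with a classical solution with zero initial data. The conserved energy \eqref{eq:2.1.10} then forces $\mathcal S(0)=0$; this is exactly how the paper does it.

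Second, drop your first plan of proving uniform high-norm bounds by standard energy estimates. Such estimates are not available here: the diagnostic velocity $w=-\partial_z^{-1}\partial_xv$ costs a horizontal derivative, so the high-order energy method does not close, and \eqref{eq:2.1.10} controls only $L^2$. Your fallback also involves no interpolation. The only ingredients it needs are:
\begin{itemize}
\item the coefficientwise (or $X^{-2}$) continuity of $\mathcal N_2$ and $\mathcal N_3$ on $X^{s-\delta}$;
\item the bound $U^\varepsilon=O(|\varepsilon|)$ from differentiability;
\item the bound $U^\varepsilon-e^{\cdot\mathcal L}U_0=O(|\varepsilon|^2)$ from differentiability, both uniform in time.
\end{itemize}
Together these are sufficient.
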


\begin{proof}
Set \(r:=s-\delta>1\). We first establish estimates in \(X^{-2}\)
that justify passage to the limit in the Duhamel identity. For smooth
\(f,g\) with zero vertical mean, H\"older's inequality and
\eqref{eq:2.1.17} imply
\[
 \|b(f,g)\|_{L^1}
 \le\|\partial_z^{-1}\partial_xf\|_{L^2}\|\partial_zg\|_{L^2}
    +\|f\|_{L^2}\|\partial_xg\|_{L^2}
 \le C_r\|f\|_{H^r}\|g\|_{H^r}.
\]
Since \(L^1(\Omega)\hookrightarrow H^{-2}(\Omega)\) by duality and
\(\mathcal P\) is bounded on \(H^{-2}\),
\begin{equation}
\label{eq:5.1.5}
        \|\mathcal N_2(U_1,U_2)\|_{X^{-2}}
        \le C_r
        \|U_1\|_{X^r}\|U_2\|_{X^r}.
\end{equation}

For the cubic term, vertical averaging and the algebra property of
\(H^r(\Omega)\) ensure that
\[
 \|\partial_x\overline{v_1\theta_2}\|_{L^2(\mathbb T_x)}
 \le C_r\|v_1\theta_2\|_{H^r}
 \le C_r\|v_1\|_{H^r}\|\theta_2\|_{H^r}.
\]
View this factor as a function independent of \(z\). Applying
H\"older's inequality to its products with
\(\partial_zv_3,\partial_z\theta_3\), followed by the embedding
\(L^1\hookrightarrow H^{-2}\), we obtain
\begin{equation}
\label{eq:5.1.6}
 \|\mathcal N_3(U_1,U_2,U_3)\|_{X^{-2}}
 \le C_r\|U_1\|_{X^r}\|U_2\|_{X^r}\|U_3\|_{X^r}.
\end{equation}
The projection removes the vertical mean of the quadratic output.
The cubic output has zero vertical mean because its scalar factor is
independent of \(z\) and each remaining factor is a vertical derivative.
Thus density extends both maps continuously from \(X^r\) inputs to
\(X^{-2}\), with the bounds \eqref{eq:5.1.5}--\eqref{eq:5.1.6}.

Suppose, for a contradiction, that \(\mathcal S\) is twice Fr\'echet
differentiable at zero. By \emph{(i)} and \emph{(ii)},
\(\mathcal S(0)\) is a classical solution with zero initial data.
The energy identity \eqref{eq:2.1.10} therefore forces
\(\mathcal S(0)=0\).

For smooth \(U_0\in\mathcal O\), write \(U(t):=\mathcal S(U_0)(t)\).
Property~\emph{(iii)} reads
\begin{equation}
\label{eq:5.1.7}
 U(t)=e^{t\mathcal L}U_0+
 \int_0^t e^{(t-\tau)\mathcal L}
 \bigl[\mathcal N_2(U(\tau),U(\tau))
      +\mathcal N_3(U(\tau),U(\tau),U(\tau))\bigr] \,\mathrm d\tau.
\end{equation}
We interpret this identity in \(X^{-2}\). By
Proposition~\ref{prop:2.1.1-unitary-linear-evolution}, the Duhamel
operator is bounded on \(C([0,T];X^{-2})\), with operator norm at
most \(T\). Hence limits in this space pass through the integral.

For a fixed \(H\in X^\infty\), we have
\(\varepsilon H\in\mathcal O\) whenever the real parameter
\(\varepsilon\) is sufficiently small. Differentiability at zero and
\(\mathcal S(0)=0\) imply
\[
        \|\mathcal S(\varepsilon H)\|_{C([0,T];X^r)}
        \le C_H|\varepsilon|.
\]
Using this bound in \eqref{eq:5.1.5}--\eqref{eq:5.1.7}, we obtain
\[
 \|\mathcal S(\varepsilon H)-\varepsilon e^{\cdot\mathcal L}H
     \|_{C([0,T];X^{-2})}
 \le C_{H,T}\bigl(\varepsilon^2+|\varepsilon|^3\bigr).
\]
After division of \eqref{eq:5.1.7} by \(\varepsilon\), the preceding
estimate identifies the limit in \(C([0,T];X^{-2})\) as
\[
        D\mathcal S(0)[H](t)=e^{t\mathcal L}H.
\]
Both sides lie in \(C([0,T];X^r)\). The injectivity of
\(X^r\hookrightarrow X^{-2}\) therefore establishes the identity in
\(C([0,T];X^r)\).

To identify the second differential, fix \(H_1,H_2\in X^\infty\),
write \(V_j=e^{\cdot\mathcal L}H_j\), and, for sufficiently small real
\(\varepsilon\), set
\[
 U_\varepsilon^{12}:=\mathcal S(\varepsilon(H_1+H_2))
 \quad\text{and}\quad
 U_\varepsilon^j:=\mathcal S(\varepsilon H_j),\qquad j=1,2.
\]
For each fixed direction \(H\in\{H_1,H_2,H_1+H_2\}\), the
second-order Taylor expansion in \(C([0,T];X^r)\) is
\[
 \mathcal S(\varepsilon H)
 =\varepsilon e^{\cdot\mathcal L}H
  +\frac{\varepsilon^2}{2}D^2\mathcal S(0)[H,H]
  +o(\varepsilon^2).
\]
Subtracting these expansions and using symmetry of the second
differential, we find
\begin{equation}
\label{eq:5.1.8}
 \frac{U_\varepsilon^{12}-U_\varepsilon^1-U_\varepsilon^2}
      {\varepsilon^2}
 \longrightarrow
 D^2\mathcal S(0)[H_1,H_2]
 \quad\text{in }C([0,T];X^r).
\end{equation}
First-order differentiability also gives, in \(C([0,T];X^r)\),
\[
 \frac{U_\varepsilon^{12}}{\varepsilon}\longrightarrow V_1+V_2
 \quad\text{and}\quad
 \frac{U_\varepsilon^j}{\varepsilon}\longrightarrow V_j,
 \qquad j=1,2.
\]
By the bilinear bound \eqref{eq:5.1.5}, these limits imply, in
\(C([0,T];X^{-2})\),
\[
 \frac{\mathcal N_2(U_\varepsilon^{12},U_\varepsilon^{12})
       -\sum_{j=1}^2\mathcal N_2(U_\varepsilon^j,U_\varepsilon^j)}
      {\varepsilon^2}
 \longrightarrow\mathcal N_2(V_1,V_2)+\mathcal N_2(V_2,V_1).
\]
All three trajectories have size \(O(|\varepsilon|)\) in
\(C([0,T];X^r)\). By \eqref{eq:5.1.6}, each cubic term is
\(O(|\varepsilon|)\) in \(C([0,T];X^{-2})\) after division by
\(\varepsilon^2\).

Subtracting the three Duhamel identities cancels the linear terms.
Divide by \(\varepsilon^2\), pass to the limit, and use
\eqref{eq:5.1.8} to conclude, in \(C([0,T];X^{-2})\), that
\[
\begin{aligned}
 D^2\mathcal S(0)[H_1,H_2](t)
 ={}&\int_0^te^{(t-\tau)\mathcal L}
 \bigl[\mathcal N_2(V_1(\tau),V_2(\tau))
      +\mathcal N_2(V_2(\tau),V_1(\tau))\bigr] \,\mathrm d\tau\\
 ={}&\mathcal B_t(H_1,H_2).
\end{aligned}
\]
For smooth \(H_1,H_2\), both sides lie in \(C([0,T];X^r)\).
Thus \eqref{eq:5.1.4} holds in the asserted space, by injectivity of
the embedding into \(C([0,T];X^{-2})\).

Evaluation at any fixed \(t_0\in(0,T]\) is bounded. Thus
\eqref{eq:5.1.4} and the boundedness of \(D^2\mathcal S(0)\) extend
\(\mathcal B_{t_0}\) from the dense subspace
\(X^\infty\times X^\infty\) to a bounded real-bilinear map from
\(X^s\times X^s\) to \(X^{s-\delta}\), contrary to
Theorem~\ref{thm:2.2.1-second-picard-obstruction}.
\end{proof}

\subsection{Quadratic changes of variables}

The resonant term identified in
Theorem~\ref{thm:2.2.4-mean-resonant-averaging} raises the question of
whether a quadratic change of variables can remove the fixed-time
obstruction.

For \(s\in\mathbb R\), consider a time-independent \(C^2\) local
change of variables \(U=\Psi(V)\) on the real space \(X^s\), with
\(\Psi(0)=0\) and \(D\Psi(0)=I\). Its second differential
\(\mathcal A:=D^2\Psi(0)\) is a bounded symmetric bilinear map from
\(X^s\times X^s\) to \(X^s\), and the Taylor expansion is
\[
 \Psi(V)=V+\frac12\mathcal A(V,V)+o(\|V\|_{X^s}^2)
 \qquad\text{in }X^s.
\]
Bounded quadratic corrections alter the second Picard operator only
by endpoint terms. The next result also identifies the resonant
coefficients preserved by Fourier-multiplier corrections.

\begin{corollary}
\label{cor:5.2.1-quadratic-change-obstruction}
Let \(s\in\mathbb R\), \(0\le\delta<1\), and let
\(\mathcal A:X^s\times X^s\to X^s\) be a bounded symmetric real-bilinear
map. For \(H_1,H_2\in X^\infty\), define in \(X^{s-1}\)
\begin{equation}
\label{eq:5.2.1}
\begin{aligned}
 \widetilde{\mathcal N}_2^{\mathcal A}(H_1,H_2)
 :={}&\mathcal N_2(H_1,H_2)+\mathcal N_2(H_2,H_1)
       +\mathcal L\mathcal A(H_1,H_2)\\
 &-\mathcal A(\mathcal LH_1,H_2)
       -\mathcal A(H_1,\mathcal LH_2).
\end{aligned}
\end{equation}
Define the associated quadratic Duhamel operator by the
\(X^{s-1}\)-valued integral
\begin{equation}
 \mathcal B_t^{\mathcal A}(H_1,H_2)
 :=\int_0^te^{(t-\tau)\mathcal L}
       \widetilde{\mathcal N}_2^{\mathcal A}
       \bigl(e^{\tau\mathcal L}H_1,e^{\tau\mathcal L}H_2\bigr)
       \,\mathrm d\tau.
\end{equation}
These operators satisfy
\begin{equation}
\label{eq:5.2.3}
 \mathcal B_t^{\mathcal A}(H_1,H_2)
 =\mathcal B_t(H_1,H_2)+e^{t\mathcal L}\mathcal A(H_1,H_2)
       -\mathcal A\bigl(e^{t\mathcal L}H_1,e^{t\mathcal L}H_2\bigr).
\end{equation}
For smooth inputs, the output lies in \(X^{s-\delta}\), with
\begin{equation}
\label{eq:5.2.4}
 \|\mathcal B_t^{\mathcal A}(H_1,H_2)-\mathcal B_t(H_1,H_2)\|_{X^{s-\delta}}
 \le 2\|\mathcal A\|\,\|H_1\|_{X^s}\|H_2\|_{X^s}
 \qquad(t\in\mathbb R),
\end{equation}
where \(\|\mathcal A\|\) denotes its bilinear operator norm. Consequently,
for every fixed \(t\ne0\), \(\mathcal B_t^{\mathcal A}\) has no bounded
real-bilinear extension from \(X^s\times X^s\) to \(X^{s-\delta}\).

For the coefficient statement, assume instead that \(\mathcal A\) is
a time-independent symmetric real-bilinear Fourier multiplier mapping
pairs of constrained trigonometric polynomials to constrained
trigonometric polynomials, without any boundedness assumption.
Define its finite wave coefficients
\(a_{\sigma\mu\nu}(k;k_1,k_2)\) by \eqref{eq:3.7}, with \(\mathcal A\)
in place of \(\mathcal N_2\). Under the formal quadratic change
\(U=V+\frac12\mathcal A(V,V)\), the transformed polarized coefficient is
\begin{equation}
\label{eq:5.2.5}
 \widetilde q^{\mathcal A}_{\sigma\mu\nu}
 =\widetilde q_{\sigma\mu\nu}
    +i\Phi_{\sigma\mu\nu}a_{\sigma\mu\nu},
 \qquad k=k_1+k_2\in K.
\end{equation}
Every active exact resonant coefficient is therefore unchanged.
In particular, no Fourier-multiplier change of this form eliminates
the full quadratic term.
\end{corollary}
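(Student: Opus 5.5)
The identity \eqref{eq:5.2.3} comes from writing the correction terms inside the Duhamel integral as an exact $\tau$-derivative. For $H_1,H_2\in X^\infty$ set
\[
 \Lambda(\tau):=e^{(t-\tau)\mathcal L}\mathcal A\bigl(e^{\tau\mathcal L}H_1,e^{\tau\mathcal L}H_2\bigr).
\]
Boundedness of $\mathcal A$ on $X^s$ and the smoothness of $\tau\mapsto e^{\tau\mathcal L}H_j$ in $X^s$ give differentiability of $\tau\mapsto\mathcal A(e^{\tau\mathcal L}H_1,e^{\tau\mathcal L}H_2)$ in $X^s$, with derivative $\mathcal A(\mathcal Le^{\tau\mathcal L}H_1,\cdot)+\mathcal A(\cdot,\mathcal Le^{\tau\mathcal L}H_2)$. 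Since $X^s\subset\operatorname{Dom}_{X^{s-1}}(\mathcal L)$, the group $e^{(t-\tau)\mathcal L}$ is differentiable on $X^s$ as an $X^{s-1}$-valued map, by Proposition~\ref{prop:2.1.1-unitary-linear-evolution}. Hence, in $X^{s-1}$,
\[
 \frac{\mathrm d}{\mathrm d\tau}\Lambda(\tau)
 =-e^{(t-\tau)\mathcal L}\Bigl[\mathcal L\mathcal A(\cdot,\cdot)-\mathcal A(\mathcal L\,\cdot,\cdot)-\mathcal A(\cdot,\mathcal L\,\cdot)\Bigr]\bigl(e^{\tau\mathcal L}H_1,e^{\tau\mathcal L}H_2\bigr).
\]
Integrating from $0$ to $t$ gives $\Lambda(0)-\Lambda(t)$ for the correction part, which is exactly \eqref{eq:5.2.3}. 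The step needing the most care is the product rule for this composition; I would justify it coordinatewise on the Fourier side, or by the standard argument for a strongly continuous group applied to a $C^1$ curve in the domain.

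The bound \eqref{eq:5.2.4} then follows directly. The two endpoint terms lie in $X^s\subset X^{s-\delta}$, and the group is unitary, so each term has norm at most $\|\mathcal A\|\,\|H_1\|_{X^s}\|H_2\|_{X^s}$. For the nonextension claim, assume $\mathcal B_t^{\mathcal A}$ were bounded into $X^{s-\delta}$. Subtracting the uniformly bounded difference from \eqref{eq:5.2.4} would make $\mathcal B_t$ bounded on $X^\infty\times X^\infty$. This contradicts the lower bound \eqref{eq:2.2.2}, applied to the unit-normalized family $H,G_M$ of Theorem~\ref{thm:2.2.1-second-picard-obstruction}.

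For the coefficient formula \eqref{eq:5.2.5}, substitute $U=V+\frac12\mathcal A(V,V)$ into \eqref{eq:2.1.11} and keep only quadratic terms. This gives
\[
 \partial_tV=\mathcal LV+\mathcal N_2(V,V)+\tfrac12\bigl[\mathcal L\mathcal A(V,V)-2\mathcal A(\mathcal LV,V)\bigr]+O(V^3),
\]
whose polarization is $\widetilde{\mathcal N}_2^{\mathcal A}$. Next pass to wave variables. There $\mathcal L$ acts by $i\omega_\sigma(k)$ on the output and by $i\omega_\mu(k_1)$, $i\omega_\nu(k_2)$ on the inputs. Let $a_{\sigma\mu\nu}$ be defined through \eqref{eq:3.7}, and read it as the polarized coefficient of $\mathcal A$, consistent with the convention $\mathcal A=D^2\Psi(0)$. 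Then the commutator contributes $i(\omega_\sigma(k)-\omega_\mu(k_1)-\omega_\nu(k_2))a_{\sigma\mu\nu}=i\Phi_{\sigma\mu\nu}a_{\sigma\mu\nu}$. The only bookkeeping risk is the factor $\frac12$ versus polarization, and it is settled by matching against \eqref{eq:3.9}. Finally, on the resonant set $\Phi_{\sigma\mu\nu}=0$ the coefficient is unchanged. Active resonances exist, for example those of Corollary~\ref{cor:4.1.1-active-high-low-resonances}, so no such change of variables can make $\widetilde q^{\mathcal A}$ vanish identically.
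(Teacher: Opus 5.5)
Your proposal is correct and follows essentially the same route as the paper: you differentiate $e^{(t-\tau)\mathcal L}\mathcal A(e^{\tau\mathcal L}H_1,e^{\tau\mathcal L}H_2)$ in $X^{s-1}$ to obtain \eqref{eq:5.2.3}, bound the endpoint terms with the triangle inequality, and derive the contradiction from the family $H,G_M$. You also get \eqref{eq:5.2.5} by the formal substitution, with the three $\mathcal L$-terms contributing $i\Phi_{\sigma\mu\nu}a_{\sigma\mu\nu}$ (no extra symmetrization, since $\mathcal A$ is already symmetric). The one small omission is that you should note $\mathcal B_t(H_1,H_2)$ is itself smooth for smooth inputs, as the paper does; this is what gives the claim that the full output, and not only the difference, lies in $X^{s-\delta}$.
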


\begin{proof}
Equations \eqref{eq:2.1.6} and \eqref{eq:2.1.17} imply that
\(\mathcal L:X^s\to X^{s-1}\) is bounded. Thus, for bounded
\(\mathcal A\), expression \eqref{eq:5.2.1} is well defined on smooth
inputs even if \(\mathcal A(H_1,H_2)\) is not smooth. Set
\(V_j(\tau):=e^{\tau\mathcal L}H_j\). These trajectories lie in
\(C^1(\mathbb R;X^s)\) and satisfy \(\partial_\tau V_j=\mathcal LV_j\).
Bounded bilinearity also makes
\(\tau\mapsto\mathcal A(V_1(\tau),V_2(\tau))\) a \(C^1\) map into
\(X^s\). Since \(X^s\) lies in the generator domain on \(X^{s-1}\),
the product rule and
Proposition~\ref{prop:2.1.1-unitary-linear-evolution} imply, in
\(X^{s-1}\),
\[
 \frac{\mathrm d}{\mathrm d\tau}
 \bigl[e^{(t-\tau)\mathcal L}\mathcal A(V_1(\tau),V_2(\tau))\bigr]
 = e^{(t-\tau)\mathcal L}
 \bigl[-\mathcal L\mathcal A(V_1,V_2)
 +\mathcal A(\mathcal LV_1,V_2)
             +\mathcal A(V_1,\mathcal LV_2)\bigr].
\]
Integrating from \(0\) to \(t\) and comparing with \eqref{eq:2.2.1}
proves \eqref{eq:5.2.3} for every \(t\in\mathbb R\), with the usual
orientation of the integral when \(t<0\).

By \eqref{eq:2.1.15}, the embedding \(X^s\hookrightarrow X^{s-\delta}\)
has norm at most one. The triangle inequality and the isometry of
\(e^{t\mathcal L}\) control the two endpoint terms in
\eqref{eq:5.2.3} and establish \eqref{eq:5.2.4}. The same identity
proves the asserted regularity, since \(\mathcal B_t(H_1,H_2)\) is
smooth for smooth inputs.

For the normalized real inputs \(H,G_M\) in
\eqref{eq:4.1.9}--\eqref{eq:4.1.11}, the reverse triangle inequality,
\eqref{eq:2.2.2}, and \eqref{eq:5.2.4} imply
\[
 \|\mathcal B_t^{\mathcal A}(H,G_M)\|_{X^{s-\delta}}
 \ge c_{s,\delta}|t|M^{2(1-\delta)}-2\|\mathcal A\|.
\]
Both input norms equal one, while the lower bound diverges as
\(M\to\infty\) for every fixed \(t\ne0\). Hence no bounded extension
exists. The inverse change reverses the sign of the endpoint correction
without affecting the obstruction.

For the coefficient identity, we work on trigonometric polynomials.
By symmetry of \(\mathcal A\), the formal substitution
\(U=V+\frac12\mathcal A(V,V)\) satisfies
\[
\partial_tU=\partial_tV+\mathcal A(V,\partial_tV).
\]
Collecting the quadratic terms in \eqref{eq:2.1.11} and polarizing
recovers \eqref{eq:5.2.1}. By \eqref{eq:2.1.28} and
\eqref{eq:2.1.36}, the three terms involving \(\mathcal L\) have
polarized wave coefficient \(i\Phi_{\sigma\mu\nu}a_{\sigma\mu\nu}\).
Together with Lemma~\ref{lem:3.1-quadratic-branch-multipliers}, this
proves \eqref{eq:5.2.5}.

Elimination of the full quadratic term would require the homological
equation
\begin{equation}
\label{eq:5.2.6}
 i\Phi_{\sigma\mu\nu}(k;k_1,k_2)
 a_{\sigma\mu\nu}(k;k_1,k_2)
 =-\widetilde q_{\sigma\mu\nu}(k;k_1,k_2)
\end{equation}
for every admissible interaction. At the active exact resonances in
Corollaries~\ref{cor:4.1.1-active-high-low-resonances}
and~\ref{cor:4.2.1-diagnostic-mixed-resonances}, its left-hand side
vanishes for every finite \(a_{\sigma\mu\nu}\), but its right-hand
side is nonzero. This excludes elimination of the full quadratic term
by such a Fourier-multiplier change.
\end{proof}

\subsection{Dynamics on Fourier rays}

On the transparent Fourier rays classified in
Theorem~\ref{thm:2.2.3-maximal-transparent-rays}, the nonlinear terms
vanish. We extend the ordered cancellations to \(H^s\) data and
solve the resulting constant-coefficient linear system.

\begin{corollary}
\label{cor:5.3.1-fourier-ray-dynamics}
Let \(s>2\), let \(k_0=(n_0,m_0)\) be primitive with \(m_0\ne0\), and set
\[
        \lambda_0:=\frac{L_zn_0}{L_xm_0}.
\]
For every real-valued \(U_0=(v_0,\theta_0)\in\mathcal R_{k_0}^s\), the
closed system \eqref{eq:2.1.9} has the global ray-supported solution
\begin{equation}
\label{eq:5.3.1}
 \begin{pmatrix}
  \sqrt\gamma\,v(t)\\[1mm]
  \sqrt{g\beta}\,\theta(t)
 \end{pmatrix}
 =
 \begin{pmatrix}
  \cos(N\lambda_0t)&-\sin(N\lambda_0t)\\[1mm]
  \sin(N\lambda_0t)& \cos(N\lambda_0t)
 \end{pmatrix}
 \begin{pmatrix}
  \sqrt\gamma\,v_0\\[1mm]
  \sqrt{g\beta}\,\theta_0
 \end{pmatrix}.
\end{equation}
This is the unique solution with initial data \(U_0\) in
\(C^1(\mathbb R;\mathcal R_{k_0}^s)\), and
\(\|U(t)\|_{X^s}=\|U_0\|_{X^s}\). It solves the original system
\eqref{eq:1.2.1} with diagnostic variables
\begin{equation}
\label{eq:5.3.2}
 w(t)=-\lambda_0v(t),
\end{equation}
and
\begin{equation}
\label{eq:5.3.3}
 p(t)=g\beta\partial_z^{-1}\theta(t).
\end{equation}
The solution is periodic in time with period
\(\frac{2\pi}{N|\lambda_0|}\) when \(n_0\ne0\), and stationary when
\(n_0=0\). Smoothness of the initial data persists globally.
\end{corollary}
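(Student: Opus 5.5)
The plan has four steps: show the nonlinearity vanishes on the whole ray space $\mathcal R_{k_0}^s$, solve the remaining linear system explicitly, prove uniqueness by an energy argument, and then read off the original variables.

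\textbf{Step 1: ordered cancellations on $\mathcal R_{k_0}^s$.} Theorem~\ref{thm:2.2.3-maximal-transparent-rays} gives \eqref{eq:2.2.14} for $U_j\in\mathcal R_{k_0}^\infty$. For $s>2$, Proposition~\ref{prop:3.1-quadratic-endpoint-bounds} makes $\mathcal N_2$ and $\mathcal N_3$ continuous from $X^s$ into $X^{s-1}$. Ray-supported trigonometric polynomials, obtained by symmetric truncation, are dense in $\mathcal R_{k_0}^s$. Hence $\mathcal N_2(U,U)=0$ and $\mathcal N_3(U,U,U)=0$ for every $U\in\mathcal R_{k_0}^s$. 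Consequently, on $\mathcal R_{k_0}^s$ the system \eqref{eq:2.1.9} reduces to $\partial_tU=\mathcal LU$.

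\textbf{Step 2: the explicit solution.} By \eqref{eq:4.1.21}, $\partial_z^{-1}\partial_x$ acts on ray-supported functions as multiplication by $\lambda_0$. Thus $\lambda(k)=\lambda_0$ for every $k\in\Gamma_{k_0}$, and \eqref{eq:2.1.24} becomes the constant rotation \eqref{eq:5.3.1} in physical space. This rotation preserves the ray support, real-valuedness, the $X^s$ norm (Proposition~\ref{prop:2.1.1-unitary-linear-evolution}) and every $X^r$ norm. It is therefore a $C^1(\mathbb R;\mathcal R_{k_0}^s)$ solution, indeed $C^\infty$ in time, since $\mathcal L$ is bounded on $\mathcal R_{k_0}^s$ with norm $N|\lambda_0|$. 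Smoothness of the data persists. The period is $2\pi/(N|\lambda_0|)$ when $n_0\ne0$. When $n_0=0$ we have $\lambda_0=0$, so the solution is stationary.

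\textbf{Step 3: uniqueness.} This is the step I expect to need the most care. Take two solutions $U,\widetilde U\in C^1(\mathbb R;\mathcal R_{k_0}^s)$ of \eqref{eq:2.1.9} with the same data. By Step 1 both satisfy the linear equation $\partial_tU=\mathcal LU$, with $\mathcal L$ a bounded operator on the closed subspace $\mathcal R_{k_0}^s$. Their difference therefore solves a bounded linear ODE with zero data, and Gronwall's inequality, or alternatively the skew-adjointness and energy identity, gives $U=\widetilde U$. The point to check is that uniqueness is claimed only within the ray class. Because the nonlinearity vanishes identically there, no loss of derivatives enters the argument.

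\textbf{Step 4: recovery of the original variables.} By \eqref{eq:4.1.23} we have $\partial_x\overline{v\theta}=0$, so \eqref{eq:2.1.2} gives $w=-\lambda_0v$, which is \eqref{eq:5.3.2}. For \eqref{eq:5.3.3}, observe that $\overline{v^2}$ is supported on the horizontal frequencies produced by pairs $jk_0,-jk_0$. These all equal zero, so $\overline{v^2}$ is constant in $x$ and equals $\langle\overline{v^2}\rangle_x$. Then \eqref{eq:2.1.3} reduces to $p=g\beta\partial_z^{-1}\theta$. The equivalence of formulations from Section~\ref{subsec:2.1-preliminaries} holds for smooth solutions. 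For $H^s$ data with $s>2$ we have $U\in C^1$ with values in $H^s$, which embeds into $C^1$ functions. The derivation of \eqref{eq:2.1.9} from \eqref{eq:1.2.1} uses only first derivatives, so it applies classically, and $U$ solves \eqref{eq:1.2.1} with these diagnostic variables.
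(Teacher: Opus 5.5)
Your proposal is correct and follows the paper's proof essentially step for step. The shared steps are: density of ray-supported trigonometric polynomials together with the continuity from Proposition~\ref{prop:3.1-quadratic-endpoint-bounds} to remove $\mathcal N_2,\mathcal N_3$ on $\mathcal R_{k_0}^s$; reduction to the constant rotation using $\lambda(jk_0)=\lambda_0$; uniqueness from the bounded restricted generator; and reconstruction of $w,p$ via \eqref{eq:4.1.21} and \eqref{eq:4.1.23}.

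The only difference is in your last step. You invoke the equivalence of formulations at $C^1$ regularity. Note that the direction you need there is the converse reconstruction from \eqref{eq:2.1.9} back to \eqref{eq:1.2.1}, not the derivation of \eqref{eq:2.1.9}; it is equally a first-derivative algebraic identity, so your conclusion stands. The paper instead verifies \eqref{eq:1.2.1} directly in $H^{s-1}$. It uses $\partial_xf=\lambda_0\partial_zf$ on the ray, which gives $v\partial_xf+w\partial_zf=0$ and $\partial_xp=g\beta\lambda_0\theta$.
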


\begin{proof}
Let \(U_\ell\in\mathcal R_{k_0}^s\), \(\ell=1,2,3\).
The Fourier cutoffs \(\mathcal P_{\le j}\) from \eqref{eq:4.2.10}
preserve the ray and satisfy
\(\mathcal P_{\le j}U_\ell\in\mathcal R_{k_0}^\infty\) and
\(\mathcal P_{\le j}U_\ell\to U_\ell\) in \(X^s\).
By Theorem~\ref{thm:2.2.3-maximal-transparent-rays},
\[
 \mathcal N_2(\mathcal P_{\le j}U_1,\mathcal P_{\le j}U_2)=0
 \quad\text{and}\quad
 \mathcal N_3(\mathcal P_{\le j}U_1,\mathcal P_{\le j}U_2,
             \mathcal P_{\le j}U_3)=0.
\]
The multilinear bounds in
Proposition~\ref{prop:3.1-quadratic-endpoint-bounds} allow passage to
the limit in \(X^{s-1}\). Fourier convergence extends
the scalar multiplier identity \eqref{eq:4.1.21}. Continuity of
averaging and multiplication in \(H^s\) likewise extends
\eqref{eq:4.1.23} to the original inputs in \(H^{s-1}\).

The linear operator preserves \(\mathcal R_{k_0}^s\). On this space,
\eqref{eq:4.1.21} reduces the closed system to
\begin{equation}
\label{eq:5.3.4}
 \partial_tv=-g\beta\lambda_0\theta,
\end{equation}
and
\begin{equation}
\label{eq:5.3.5}
 \partial_t\theta=\gamma\lambda_0v.
\end{equation}
In the energy variables, this system has the rotation formula
\eqref{eq:5.3.1}. The rotation preserves the Fourier support and the
\(X^s\) norm, so the nonlinear terms vanish along the entire trajectory.
Since the restricted generator is bounded on \(\mathcal R_{k_0}^s\),
we have \(U\in C^1(\mathbb R;\mathcal R_{k_0}^s)\).
Every solution in this class satisfies
\eqref{eq:5.3.4}--\eqref{eq:5.3.5}, which also proves uniqueness.

Equation \eqref{eq:4.1.23} shows that \(\overline{v\theta}\) and
\(\overline{v^2}\) are independent of \(x\). The reconstruction
formulas \eqref{eq:2.1.2}--\eqref{eq:2.1.3}, together with
\eqref{eq:4.1.21}, then reduce to
\eqref{eq:5.3.2}--\eqref{eq:5.3.3}. The zero vertical mean of
\(\partial_z^{-1}\theta\) ensures the prescribed pressure normalization.

For \(f=v,\theta\), the ray identity also implies
\[
 v\partial_xf+w\partial_zf
 =v(\partial_xf-\lambda_0\partial_zf)=0.
\]
Moreover, \(\partial_xp=g\beta\lambda_0\theta\),
\(\partial_zp=g\beta\theta\), and
\(\partial_xv+\partial_zw=0\). Together with
\eqref{eq:5.3.4}--\eqref{eq:5.3.5}, these identities verify
\eqref{eq:1.2.1} in \(H^{s-1}\).
Formula \eqref{eq:5.3.1} directly shows periodicity when \(n_0\ne0\),
stationarity when \(n_0=0\), and persistence of smoothness.
\end{proof}

To characterize uniform strict stratification for the solutions in
Corollary~\ref{cor:5.3.1-fourier-ray-dynamics}, set
\begin{equation}
\label{eq:5.3.6}
 M_0(x,z):=\left(\frac{\gamma}{g\beta}|\partial_zv_0(x,z)|^2
                    +|\partial_z\theta_0(x,z)|^2\right)^{\frac12}.
\end{equation}
For \(n_0\ne0\), this amplitude determines the space-time essential
infimum of the total vertical temperature gradient.

\begin{proposition}
\label{prop:5.3.1-uniform-stratification}
Let \(U(t)\) be the solution in
Corollary~\ref{cor:5.3.1-fourier-ray-dynamics}, and let
\(\kappa\in(0,\gamma)\). If \(n_0\ne0\), then
\begin{equation}
\label{eq:5.3.7}
 \operatorname*{ess\,inf}_{(t,x,z)\in\mathbb R\times\Omega}
 \bigl(\gamma+\partial_z\theta(t,x,z)\bigr)
 =\gamma-\|M_0\|_{L^\infty}.
\end{equation}
Consequently,
\begin{equation}
\label{eq:5.3.8}
 \gamma+\partial_z\theta(t,x,z)\ge\kappa
 \quad\text{for a.e. }(t,x,z)\in\mathbb R\times\Omega
\end{equation}
if and only if
\(\|M_0\|_{L^\infty}\le\gamma-\kappa\).
If \(n_0=0\), then \(U(t)=U_0\), and \eqref{eq:5.3.8} holds if and only if
\[
        \operatorname*{ess\,inf}_{\Omega}
        (\gamma+\partial_z\theta_0)\ge\kappa.
\]
\end{proposition}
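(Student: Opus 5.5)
The plan is to use the explicit rotation formula \eqref{eq:5.3.1} together with the ray identity \eqref{eq:4.1.21}. Write \(a:=\sqrt\gamma\,\partial_zv_0\) and \(b:=\sqrt{g\beta}\,\partial_z\theta_0\). Since \(\partial_z\) commutes with the rotation, we get
\[
 \sqrt{g\beta}\,\partial_z\theta(t)
 =\sin(N\lambda_0t)\,a+\cos(N\lambda_0t)\,b,
\]
where \(a,b\) are real functions in \(H^{s-1}\subset C(\Omega)\), because \(s>2\). The amplitude satisfies \(\sqrt{g\beta}\,M_0=(a^2+b^2)^{1/2}\). Hence, for every fixed \((x,z)\),
\[
 \inf_{t}\partial_z\theta(t,x,z)\ge -M_0(x,z),
\]
and this gives the inequality \(\ge\gamma-\|M_0\|_{L^\infty}\) in \eqref{eq:5.3.7}. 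Continuity in \((t,x,z)\) lets us replace essential infima by pointwise infima, and this is worth stating explicitly.

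For the reverse inequality, the case \(n_0\ne0\) is used. Then \(\lambda_0\ne0\), and the phase \(N\lambda_0t\) runs over all of \(\mathbb R\). At a point \((x_*,z_*)\) where the continuous function \(M_0\) attains its maximum, I will choose \(t_*\) with
\[
 (\sin N\lambda_0t_*,\cos N\lambda_0t_*)=-(a,b)/|(a,b)|
\]
evaluated at that point. If \(M_0\equiv0\), the claim is trivial. With this choice, \(\partial_z\theta(t_*,x_*,z_*)=-\|M_0\|_{L^\infty}\). Since \(\partial_z\theta\) is jointly continuous, it lies below \(-\|M_0\|_{L^\infty}+\epsilon\) on a set of positive space-time measure. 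This yields equality in \eqref{eq:5.3.7}. The equivalence \eqref{eq:5.3.8} then follows immediately by comparing \(\gamma-\|M_0\|_{L^\infty}\) with \(\kappa\).

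For \(n_0=0\), we have \(\lambda_0=0\), so Corollary~\ref{cor:5.3.1-fourier-ray-dynamics} gives \(U(t)=U_0\). The space-time essential infimum is then the spatial essential infimum of \(\gamma+\partial_z\theta_0\), by Fubini. The main point requiring care is this: no ray structure beyond the rotation formula is needed. Only continuity (from \(s>2\)) and surjectivity of the phase for \(n_0\ne0\) are used, and continuity is what justifies the essential-infimum versus pointwise-infimum step.
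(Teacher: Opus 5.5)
Your proposal is correct and follows essentially the same route as the paper. Both arguments take the pointwise Cauchy--Schwarz bound \(\partial_z\theta\ge-M_0\) from the rotation formula, use a maximizer \((x_*,z_*)\) of the continuous function \(M_0\), choose \(t_*\) via \(N\lambda_0\ne0\) to attain \(-\|M_0\|_{L^\infty}\), pass to the essential infimum by joint continuity, and treat \(n_0=0\) by stationarity.
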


\begin{proof}
The temperature component of \eqref{eq:5.3.1} satisfies
\begin{equation}
 \partial_z\theta(t)
 =\sqrt{\frac{\gamma}{g\beta}}\sin(N\lambda_0t)\,\partial_zv_0
  +\cos(N\lambda_0t)\,\partial_z\theta_0.
\end{equation}
Assume \(n_0\ne0\). By Cauchy--Schwarz and \eqref{eq:5.3.6},
\(\partial_z\theta(t,x,z)\ge-M_0(x,z)\). The assumption \(s>2\)
ensures continuity of the initial vertical derivatives. Thus \(M_0\)
attains its maximum at some \((x_*,z_*)\).
If \(M_0\equiv0\), identity \eqref{eq:5.3.7} is immediate.
Otherwise choose \(\tau_*\) such that
\[
 \sqrt{\frac{\gamma}{g\beta}}\partial_zv_0(x_*,z_*)\sin\tau_*
 +\partial_z\theta_0(x_*,z_*)\cos\tau_*
 =-M_0(x_*,z_*).
\]
Since \(N\lambda_0\ne0\), choose \(t_*\) so that
\(N\lambda_0t_*\equiv\tau_*\pmod{2\pi}\). At this time,
\[
 \partial_z\theta(t_*,x_*,z_*)=-\|M_0\|_{L^\infty}.
\]
By joint continuity, values arbitrarily close to this minimum occur
on sets of positive space-time measure. Together with
\(\partial_z\theta\ge-\|M_0\|_{L^\infty}\), this establishes
\eqref{eq:5.3.7} and the criterion \eqref{eq:5.3.8}. For \(n_0=0\),
stationarity in Corollary~\ref{cor:5.3.1-fourier-ray-dynamics}
proves the remaining assertion.
\end{proof}


\end{document}